\newtheorem{theorem}{Theorem}[section]
\newtheorem{lemma}[theorem]{Lemma}
\newtheorem{corollary}[theorem]{Corollary}
\theoremstyle{definition}
\newtheorem{definition}[theorem]{Definition}
\theoremstyle{remark}
\newtheorem{remark}[theorem]{Remark}
\numberwithin{equation}{section}
\begin{document}

\title[ Triebel type mean oscillation spaces]
{Wavelets and Triebel type oscillation spaces
}

\author[Pengtao Li]{Pengtao Li}
\address{Department of Mathematics, Shantou University, Shantou, Guangdong, China.}
\email{ptli@stu.edu.cn}

\author[Qixiang Yang]{Qixiang Yang}\thanks{Qixiang Yang is the corresponding author}
\address{School of Mathematics and Statics, Wuhan University, Wuhan, 430072, China.}
\email{qxyang@whu.edu.cn}
\author[Bentuo Zheng]{Bentuo Zheng}
\address{Department of Mathematical Sciences, University of Memphis, Memphis, TN 38152-3240.}
\curraddr{}
\email{bzheng@memphis.edu}
\dedicatory{}

\thanks{Pengtao Li's research is supported by NSFC No.11171203,  11201280;
New Teacher's Fund for Doctor Stations, Ministry of Education No.20114402120003;
Guangdong Natural Science Foundation S2011040004131; Foundation for
Distinguished Young Talents in Higher Education of Guangdong, China,
LYM11063. Qixiang Yang's research is supported in part by NSFC No. 11271209.
Bentuo Zheng's research is supported in part by NSF grant DMS-1200370.}
 \keywords{Fractional heat semigroup, 
Wavelets, Triebel-Lizorkin-Morrey spaces..}

 \subjclass[2000]{Primary 35Q30; 76D03; 42B35; 46E30}
\maketitle

\date{}

\vspace{0.5cm}




\thanks{}
\thanks{}


\date{}

\dedicatory{}

\begin{abstract}
We apply wavelets to identify the Triebel type oscillation
spaces with the known Triebel-Lizorkin-Morrey spaces
$\dot{F}^{\gamma_1,\gamma_2}_{p,q}(\mathbb{R}^{n})$.
Then 
we establish a  characterization of
$\dot{F}^{\gamma_1,\gamma_2}_{p,q}(\mathbb{R}^{n})$ via the fractional heat semigroup.
Moreover, we prove the continuity of Calder\'on-Zygmund operators on these spaces. The results of
this paper also provide necessary tools for the study of
well-posedness of Navier-Stokes equations.
\end{abstract}

\maketitle \tableofcontents \pagenumbering{arabic}
\section{Introduction}\label{sec1}
We  state briefly the history of Triebel-Lizorkin spaces and their Morrey type generalization.
Triebel-Lizorkin spaces $\dot{F}^{s}_{p,q}(\mathbb{R}^{n})$ were first introduced by H. Triebel
and can be seen as generalizations of many standard function spaces
such as Lebesgue spaces $L^{p}$ and Sobolev spaces.
In the research of harmonic analysis and partial differential equations,
Triebel-Lizorkin spaces play an important role. In recent decades, $\dot{F}^{s}_{p,q}(\mathbb{R}^{n})$ have attracted great attention of many mathematicians, and  a lot of work has been done. We refer the readers to Triebel \cite{Triebel-1, Triebel-2} for an overview of Triebel-Lizorkin spaces and their applications.

D. Yang and his collaborators are pioneers on the study of Triebel type Morrey spaces. By applying Hausdorff capacity and Littlewood-Paley theory,
Yang-Yuan \cite{YY} introduced a new class of function spaces $\dot{F}^{s, \tau}_{p,q}(\mathbb{R}^{n})$
with $p\in(0, \infty)$ which generalize  many classical function spaces. For example, $\dot{F}^{s, \frac{n}{p}}_{p,q}(\mathbb{R}^{n})=\dot{F}^{s}_{p,q}(\mathbb{R}^{n})$. $\dot{F}^{\alpha, 1/2-\alpha/n}_{2,2}(\mathbb{R}^{n})=Q_{\alpha}(\mathbb{R}^{n})$, where $Q_{\alpha}(\mathbb{R}^{n})$ are the spaces introduced  by Ess\'en-Janson-Peng-Xiao \cite{EJPX}. For more information, we refer to Yuan-Sickel-Yang \cite{YSY}.

Our aim is to study a class of mean oscillation spaces with Triebel-Lizorkin norm by wavelets and semigroup.
In this paper, the Triebel type oscillation spaces
$\dot{F}^{\gamma_{1},\gamma_{2}}_{p,q}(\mathbb{R}^{n})$ are defined as
$$
\sup\limits_{ Q}
|Q|^{\frac{\gamma_{2}}{n}-\frac{1}{p}} \inf\limits_{P_{Q, f}\in
S^{\gamma_{1},\gamma_{2}}_{p,q,f}} \|\varphi_{Q} (f-P_{Q,
f})\|_{\dot{F}^{\gamma_{1}, q}_{p}} <+ \infty,
$$
where the supremum is taken over all cubes $Q$ and $S^{\gamma_{1},\gamma_{2}}_{p,q,f}$ denotes the set of all polynomials satisfying certain conditions. Details can be found in Definition \ref{def1}. In Theorem \ref{lem:c}, we give a wavelet  characterization of these spaces.
As a consequence, $\dot{F}^{\gamma_{1},\gamma_{2}}_{p,q}(\mathbb{R}^{n})$
coincide with $\dot{F}^{s,\tau}_{p,q}(\mathbb{R}^{n})$ introduced by  Yang-Yuan \cite{YY}.
Theorem \ref{lem:c} implies that Calder\'on -Zygmund operators are bounded on $\dot{F}^{\gamma_{1},\gamma_{2}}_{p,q}(\mathbb{R}^{n})$. Moreover, our wavelet characterization is independent of the choice of wavelet bases. See Corollary \ref{coro-Banach} and Theorem \ref{lem:CZcontinuity}, respectively.

It is well-known that for any $f\in BMO(\mathbb{R}^{n})$, the Poisson integral $P_{t}(f)$
gives a harmonic extension of $f$ to the tent space on $\mathbb{R}^{n+1}_{+}$.
This result gives a relation between function spaces on $\mathbb{R}^{n}$ and the ones on $\mathbb{R}^{n+1}_{+}$.
The well-posedness of fluid equations needs often such characterizations of function spaces.
In 2001,  Koch-Tataru \cite{KT} obtained a semigroup characterizations of $BMO(\mathbb{R}^{n})$.
In 2007, by Hausdorff capacity, Xiao \cite{X1} gave a semigroup characterization of $Q_{\alpha}(\mathbb{R}^{n})$.
 Li-Zhai \cite{LZ1} further developed the idea of \cite{KT, X1}  and obtained a semigroup characterization of $Q^{\beta}_{\alpha}(\mathbb{R}^{n})$.
We  refer the readers to Cannone \cite{C, C2}, Li-Xiao-Yang \cite{LXY}, Lin-Yang \cite{LinYang} and Miao-Yuan-Zhang \cite{C. Miao B. Yuan  B. Zhang} for further information.

In Section \ref{sec3}, we introduce tent type spaces $\mathbb{F}^{\gamma_{1}, \gamma_{2}}_{p,q,m,m'}$
and $\mathbb{F}^{\gamma}_{\tau, \infty}$ defined on $\mathbb{R}^{n+1}_{+}$
and study some properties of these spaces. In Section \ref{sec:semigroup}, via fractional heat semigroup,
 we establish a relation between the functions in $\dot{F}^{\gamma_{1},\gamma_{2}}_{p,q}(\mathbb{R}^{n})$ and $\mathbb{F}^{\gamma_{1}, \gamma_{2}}_{p,q,m,m'}$.

$\mathbb{F}^{\gamma_{1}, \gamma_{2}}_{p,q,m,m'}$ is defined as follows:
$$\begin{array}{rl}
\mathbb{F}^{\gamma_{1}, \gamma_{2}}_{p,q,m,m'}&=\mathbb{F}^{\gamma_{1}, \gamma_{2}, I}_{p,q,m }\cap \mathbb{F}^{\gamma_{1}, \gamma_{2},II}_{p,q}
\cap\mathbb{F}^{\gamma_{1}, \gamma_{2}, III}_{p,q,m}\cap \mathbb{F}^{\gamma_{1}, \gamma_{2}, IV}_{p,q, m'}\\
&=:X_{1}\cap X_{2}\cap X_{2}\cap X_{4}.
\end{array}$$

 Actually, Theorem \ref{thm-cha} is not a simple generalization of the results in \cite{KT, LZ1, X1}. In the above mentioned spaces, $BMO$, $Q_{\alpha}$ and $Q^{\beta}_{\alpha}$ are all $\dot{F}^{\gamma_{1},\gamma_{2}}_{p,q}(\mathbb{R}^{n})$ spaces with $p=q=2$. For the cases $p\neq q$ with $p, q\neq2$, the Fourier transform is not valid. To overcome this difficulty, we apply a new method. Let $\{\Phi^{\varepsilon}_{j,k}(x)\}_{(\varepsilon,j,k)\in\Lambda_{n}}$ be a wavelet basis. Let $Q$ be any cube and
$$\begin{array}{rl}
f(x)=\sum\limits_{(\varepsilon,j,k)\in\Lambda_{n}}a^{\varepsilon}_{j,k}\Phi^{\varepsilon}_{j,k}(x)\in\dot{F}^{\gamma_{1},\gamma_{2}}_{p,q}(\mathbb{R}^{n}).
\end{array}$$
Based on the relation between $j$ and the radius of $Q$,
we decompose the function $F(x,t)=:e^{t(-\Delta)^{\beta}}f(x)$ into several parts such that
every part belongs to some $X_{i}$.
Such decomposition reflects the local structures of the space and  the frequency very well.
A semigroup characterization of $\dot{F}^{\gamma_{1}, \gamma_{2}}_{p,q}(\mathbb{R}^{n})$ can be obtained easily.

Our  characterization has a distinct advantage  when  
 we apply it  to the well-posedness of fluid equations.  Roughly speaking,
for $F(x, t)\in\mathbb{F}^{\gamma_{1},\gamma_{2}}_{p,q,m,m'}$,
the four parts of $\|F\|_{\mathbb{F}^{\gamma_{1},\gamma_{2}}_{p,q,m,m'}}$ have different meanings:
\begin{itemize}
\item the norms $\|F\|_{X_{1}}$ and $\|F\|_{X_{2}}$ denote the $L^{\infty}$-parts of $F(x, t)$,
\item the norms $\|F\|_{X_{3}}$ and $\|F\|_{X_{4}}$ denote the $L^{p}-$parts of $F(x, t)$.
\end{itemize}
Furthermore, the index $m$ represents the regularities for the variable $x$.
Compared with the results in \cite{KT, LZ1, X1}, if $m$ becomes bigger,
the elements in $\mathbb{F}^{\gamma_{1}, \gamma_{2}}_{p,q,m,m'}$ have higher regularities.
Moreover, Riesz operators are continuous on $\mathbb{F}^{\gamma_{1}, \gamma_{2}}_{p,q,m,m'}$.
We will also use such characterization to study the well-posedness of  Navier-Stokes equations in another paper.

 The rest of this paper is organized as follows. In Section \ref{sec2}, we
present some preliminary knowledge, notations and terminology.
Then we give a wavelet characterization of
$\dot{F}^{\gamma_{1},\gamma_{2}}_{p,q}(\mathbb{R}^{n})$ and prove
 Calder\'on-Zygmund operators are bounded on $\dot{F}^{\gamma_{1},\gamma_{2}}_{p,q}(\mathbb{R}^{n})$.
In Section \ref{sec3}, we introduce  Triebel type tent spaces.
In the last section, we establish first a relation between $\dot{F}^{\gamma_{1},\gamma_{2}}_{p,q}(\mathbb{R}^{n})$
and  $\mathbb{F}^{\gamma_{1},\gamma_{2}}_{p, q, m, m'}$.
Then, we prove the continuity of Riesz operators on
$\mathbb{F}^{\gamma_{1},\gamma_{2}}_{p, q, m, m'}$.


\section{Triebel type oscillation spaces $\dot{F}^{\gamma_{1}, \gamma_{2}}_{p, q}$}\label{sec2}
In this paper, the symbols $\mathbb{Z} $  and $\mathbb{N}$ denote the
sets of all integers and natural numbers, respectively. For $n\in
\mathbb{N},$ $\mathbb{R}^{n}$ is the $n-$dimensional Euclidean
space, with Euclidean norm denoted by $|x|$ and Lebesgue measure
denoted by $dx$. $\mathbb{R}^{n+1}_{+}$ is the  upper half-space
$\left\{(t,x)\in \mathbb{R}^{n+1}_{+}: t>0, x\in
\mathbb{R}^{n}\right\}$ with Lebesgue measure $dtdx.$ $B(x,r)$
denotes the ball in $\mathbb{R}^{n}$ with center $x$, radius $r$ and
volume $|B|$. Denote by $Q$  a cube in $\mathbb{R}^{n}$ with
sides parallel to the coordinate axes. The volume and side length of
$Q$ are denoted by $|Q|$ and
 $l(Q)$, respectively.

For convenience, the positive constants $C$
may change and usually depend
on the dimension $n,$ $\alpha,$ $\beta$
and other parameters.
The Schwartz class of rapidly decreasing functions and its dual will be denoted by
$\mathscr{S}(\mathbb{R}^{n})$ and $\mathscr{S}'(\mathbb{R}^{n}),$ respectively.
For a  function $f\in \mathscr{S}(\mathbb{R}^{n}),$  $\widehat{f}$
means  the Fourier transform of $f.$

\subsection{Wavelets}
In this paper, we use real valued tensor product orthogonal  wavelets $\Phi^{\epsilon}(x)$
which will be Daubechies wavelets or classical Meyer wavelets.
Daubechies wavelets are only used  in Section \ref{sec2.2} and Meyer wavelets will be used throughout this paper.
If $\Phi^{\epsilon}(x)$ is a Daubechies wavelet, we assume that there exists
a sufficiently big integer $m_0$ which is greater than some constant depending on the  index of the relative Triebel type oscillation spaces such that
\begin{itemize}
\item[(1)] $\forall (\epsilon\in\{0,1\}^n$,
$\Phi^{\epsilon}(x)\in C^{m_0}_{0}([-2^{M},2^{M}]^{n})$;
\item[(2)]
For any $\epsilon\in E_{n}$, $\Phi^{\epsilon}(x)$ has the vanishing moments
up to the order $m_0-1$.
\end{itemize}

We state some preliminaries on classic Meyer wavelets.
Let $\Psi^{0}(\xi)\in C^{\infty}_{0}([-\frac{4\pi}{3},
\frac{4\pi}{3}])$ be an even function satisfying
$$\left\{ \begin{aligned}
&\Psi^{0}(\xi)\in [0,1],\\
&\Psi^{0}(\xi)=1, \text{ if }|\xi|\leq \frac{2\pi}{3}.
\end{aligned}
\right.$$
 Let $\Omega(\xi)= ((\Psi^{0}(\frac{\xi}{2})^{2}-
(\Psi^{0}(\xi))^{2})^{\frac{1}{2}} $. Then $\Omega(\xi)\in
C^{\infty}_{0}([-\frac{8\pi}{3}, \frac{8\pi}{3}])$ is an even
function satisfying:
$$\left\{ \begin{aligned}
&\Omega(\xi)=0, \text{ if }|\xi|\leq \frac{2\pi}{3};\\
&\Omega^{2}(\xi)+\Omega^{2}(2\xi)=\Omega^{2}(\xi)+\Omega^{2}(2\pi-\xi)=1,
\text{ if }\xi\in [\frac{2\pi}{3},\frac{4\pi}{3}].
\end{aligned}
\right.$$
Let $\Psi^{1}(\xi)=
\Omega(\xi) e^{-\frac{i\xi}{2}}$. For any $\epsilon=
(\epsilon_{1},\cdots, \epsilon_{n}) \in \{0,1\}^{n}$, let the
Fourier transform of $\Phi^{\epsilon}(x)$ be
$\hat{\Phi}^{\epsilon}(\xi)= \prod\limits^{n}_{i=1}
\Psi^{\epsilon_{i}}(\xi_{i})$.

For $j\in \mathbb{Z},k\in\mathbb{Z}^{n}$, let $\Phi^{\epsilon}_{j,k}(x)=
2^{\frac{nj}{2}} \Phi^{\epsilon} (2^{j}x-k)$. In this paper, we
denote
$$\begin{array}{rl}
E_{n}:=& \{0,1\}^{n}\backslash\{0\},\\
F_{n}:=& \{(\epsilon,k):\epsilon\in E_{n},
k\in\mathbb{Z}^{n}\},\\
\Lambda_{n}=&\{(\epsilon,j,k), \epsilon\in E_{n}, j\in\mathbb{Z},
k\in \mathbb{Z}^{n}\}.\end{array}$$
For further information about wavelets,
we refer the reader to Meyer \cite{Me}, Wojtaszczyk \cite{Woj} and Yang \cite{Yang1}.
The following result is well-known.
\begin{lemma}\label{wavelet basis}
$\{\Phi^{\epsilon}_{j,k}(x)\}_{(\epsilon,j,k)\in \Lambda_{n}}$ is an orthogonal
basis in $L^{2}(\mathbb{R}^{n})$.
\end{lemma}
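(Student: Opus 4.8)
The statement is the classical fact that the tensor-product Meyer (resp. Daubechies) system is an orthonormal basis of $L^{2}(\mathbb{R}^{n})$, so the plan is to prove it in two stages: first set up the one-dimensional multiresolution analysis attached to $\Psi^{0},\Psi^{1},\Omega$, then tensorize. In dimension one I would work with the scaling function $\phi=\Phi^{0}$ (so $\widehat{\phi}=\Psi^{0}$) and the wavelet $\psi=\Phi^{1}$ (so $\widehat{\psi}=\Psi^{1}=\Omega(\cdot)e^{-i(\cdot)/2}$). Orthonormality of $\{2^{j/2}\psi(2^{j}\cdot-k)\}_{j,k}$ is a Plancherel computation: at a fixed scale $j$ one periodizes and uses $\operatorname{supp}\Omega\subset[-\tfrac{8\pi}{3},\tfrac{8\pi}{3}]$ together with $\Omega^{2}(\xi)+\Omega^{2}(2\pi-\xi)=1$ on $[\tfrac{2\pi}{3},\tfrac{4\pi}{3}]$ to get $\sum_{\ell\in\mathbb{Z}}|\Omega(\xi+2\pi\ell)|^{2}\equiv 1$, hence $\langle\psi_{j,k},\psi_{j,k'}\rangle=\delta_{k,k'}$; across scales, $\operatorname{supp}\widehat{\psi}(2^{-j}\cdot)=\{|\xi|\in[\tfrac{2\pi}{3}2^{j},\tfrac{8\pi}{3}2^{j}]\}$, so only the neighbouring scales $j'\in\{j\pm1,j\pm2\}$ can interact, and for those the inner product vanishes by the identity $\Omega^{2}(\xi)+\Omega^{2}(2\xi)=1$ on $[\tfrac{2\pi}{3},\tfrac{4\pi}{3}]$, the phase factor $e^{-i\xi/2}$ supplying the cancellation between consecutive translates.

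For completeness I would make the multiresolution structure explicit. Let $V_{j}=\overline{\operatorname{span}}\{2^{j/2}\phi(2^{j}\cdot-k):k\in\mathbb{Z}\}$. Since $\Psi^{0}\equiv1$ on $[-\tfrac{2\pi}{3},\tfrac{2\pi}{3}]$ and $\operatorname{supp}\Psi^{0}\subset[-\tfrac{4\pi}{3},\tfrac{4\pi}{3}]$, the same periodization argument (using again $\Omega^{2}(\xi)+\Omega^{2}(2\pi-\xi)=1$ to control the single pair of overlapping translates on $[\tfrac{2\pi}{3},\tfrac{4\pi}{3}]$) gives $\sum_{k}|\Psi^{0}(\xi+2\pi k)|^{2}\equiv1$, so $\{\phi(\cdot-k)\}_{k}$ is orthonormal; and the defining relation $(\Psi^{0}(\xi/2))^{2}=(\Psi^{0}(\xi))^{2}+\Omega^{2}(\xi)$ is exactly the two-scale refinement equation on the Fourier side, yielding $V_{j}\subset V_{j+1}$ together with the orthogonal splitting $V_{j+1}=V_{j}\oplus W_{j}$, where $W_{j}=\overline{\operatorname{span}}\{2^{j/2}\psi(2^{j}\cdot-k)\}_{k}$. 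Because a function in $V_{j}$ has Fourier transform supported in $\{|\xi|\le\tfrac{4\pi}{3}2^{j}\}$ while the $V_{j}$-projection of $f$ agrees with $\widehat{f}$ on $\{|\xi|\le\tfrac{2\pi}{3}2^{j}\}$, one obtains that $\bigcup_{j}V_{j}$ is dense in $L^{2}(\mathbb{R})$ and $\bigcap_{j}V_{j}=\{0\}$; telescoping the orthogonal decompositions then gives $L^{2}(\mathbb{R})=\bigoplus_{j\in\mathbb{Z}}W_{j}$, which is the one-dimensional case of the lemma.

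Finally I would pass to $\mathbb{R}^{n}$ using $L^{2}(\mathbb{R}^{n})\cong L^{2}(\mathbb{R})^{\otimes n}$. Writing $V_{j+1}=V_{j}\oplus W_{j}$ in each coordinate and expanding the $n$-fold tensor product, $V_{j+1}^{\otimes n}=\bigoplus_{\epsilon\in\{0,1\}^{n}}\bigl(\bigotimes_{i=1}^{n}U_{j}^{\epsilon_{i}}\bigr)$ with $U_{j}^{0}=V_{j}$ and $U_{j}^{1}=W_{j}$; the term $\epsilon=0$ is $V_{j}^{\otimes n}$ and the remaining $2^{n}-1$ terms, indexed precisely by $\epsilon\in E_{n}$, have as orthonormal basis the functions $2^{nj/2}\Phi^{\epsilon}(2^{j}\cdot-k)=\Phi^{\epsilon}_{j,k}$, $k\in\mathbb{Z}^{n}$, whose Fourier transform is $\prod_{i}\Psi^{\epsilon_{i}}(\xi_{i})$ by construction. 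Telescoping over $j$ and using density of $\bigcup_{j}V_{j}^{\otimes n}$ and triviality of $\bigcap_{j}V_{j}^{\otimes n}$ gives $L^{2}(\mathbb{R}^{n})=\bigoplus_{j\in\mathbb{Z}}W_{j}^{\otimes n}$, so $\{\Phi^{\epsilon}_{j,k}\}_{(\epsilon,j,k)\in\Lambda_{n}}$ is an orthonormal, hence orthogonal, basis. The Daubechies case is identical in structure, with the frequency-side identities replaced by the finite two-scale relation satisfied by a compactly supported Daubechies scaling function. I expect the only genuinely delicate point to be the verification of the MRA axioms — nesting, density of $\bigcup_{j}V_{j}$, and triviality of $\bigcap_{j}V_{j}$ — directly from the explicit hypotheses imposed on $\Psi^{0}$ and $\Omega$; the orthonormality computations and the tensor-product bookkeeping are routine.
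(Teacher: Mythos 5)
The paper offers no proof of this lemma: it is stated as a well-known fact with references to Meyer, Wojtaszczyk and Yang. Your proposal correctly supplies the standard argument from those references (one-dimensional MRA via the periodization identities $\sum_{\ell}\Omega^{2}(\xi+2\pi\ell)\equiv 1$ and $\sum_{k}|\Psi^{0}(\xi+2\pi k)|^{2}\equiv 1$, the two-scale relation, then tensorization), with only trivial slack in the details — e.g.\ the supports of $\hat{\psi}(2^{-j}\cdot)$ and $\hat{\psi}(2^{-j'}\cdot)$ in fact overlap on a set of positive measure only when $|j-j'|\leq 1$, and the periodization on the outer band $[\tfrac{4\pi}{3},\tfrac{8\pi}{3}]$ uses $\Omega^{2}(\xi)+\Omega^{2}(2\xi)=1$ rather than the $2\pi$-reflection identity — so the approach is sound and matches what the cited sources do.
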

For function $f(x)$, $\forall \epsilon\in \{0,1\}^{n}$ and $k\in
\mathbb{Z}^{n}$, denote by $f^{\epsilon}_{j,k}= \langle f(x),
\Phi^{\epsilon}_{j,k}(x)\rangle$ the wavelet coefficients of $f$.
Let
$$P_{j}f(x)= \sum\limits_{k\in \mathbb{Z}^{n}} f^{0}_{j,k}
\Phi^{0}_{j,k}(x)\text{ and }f_{j}(x)=Q_{j}f(x)=
\sum\limits_{(\epsilon,k)\in F_{n}} f^{\epsilon}_{j,k}
\Phi^{\epsilon}_{j,k}(x).$$ By Lemma \ref{wavelet basis}, we can see
that $P_{j}$ and $Q_{j}$ are two projection operators on
$L^{2}(\mathbb{R}^{n})$. In fact, for any two functions $u$ and $v$,
we have
\begin{equation}\label{eqn2.1}
\begin{array}{rl}
uv= & \sum\limits_{j\in \mathbb{Z}} P_{j-3}u Q_{j}v + \sum\limits_{j\in \mathbb{Z}} Q_{j}u Q_{j}v
+ \sum\limits_{0<j-j'\leq 3} Q_{j}u Q_{j'}v \\
&+ \sum\limits_{0<j'-j\leq 3} Q_{j}u Q_{j'}v
+ \sum\limits_{j\in \mathbb{Z}} Q_{j}u P_{j-3}v
\end{array}
\end{equation}

\subsection{Characterization via Daubechies wavelets}\label{sec2.2}
Now we introduce a class of Triebel type oscillation spaces.
Let $\varphi(x)\in C^{\infty}_{0} (B(0,n))$ be such that $\varphi(x)=1 $ for $x\in
B(0, \sqrt{n})$. Let $Q(x_{0},r)$ be a cube with sides parallel to the
coordinate axis, centered at $x_{0}$ and with side length $r$. To
simplify the notation, sometimes, we denote $Q=Q(r)=Q(x_{0},r)$ and
let $\varphi_{Q}(x)= \varphi(\frac{x-x_{Q}}{r})$. For $0<p,q\leq
\infty$ and $\gamma_{1}, \gamma_{2}\in \mathbb{R}$, let
$m_{0}=m^{\gamma_{1},\gamma_{2}}_{p,q}$ be a sufficiently big positive
real number. For arbitrary function $f(x)$, let
$S^{\gamma_{1},\gamma_{2}}_{p,q,f}$ be the set of polynomial
functions $P_{Q, f}(x)$ such that $\forall |\alpha|\leq m_{0}$,
 $$\int x^{\alpha} \varphi_{Q}(x) (f(x) -P_{Q,f}(x)) dx =0.$$

\begin{definition}\label{def1}
Given $0<p<\infty, 0< q\leq \infty$ and $\gamma_{1}, \gamma_{2}\in \mathbb{R}$.
Triebel type oscillation spaces
$\dot{F}^{\gamma_{1},\gamma_{2}}_{p,q}(\mathbb{R}^{n})$ are defined
as:

\begin{equation}\label{eq:b}
\sup\limits_{{\mbox{ cube } Q}}
|Q|^{\frac{\gamma_{2}}{n}-\frac{1}{p}} \inf\limits_{P_{Q, f}\in
S^{\gamma_{1},\gamma_{2}}_{p,q,f}} \|\varphi_{Q} (f-P_{Q,
f})\|_{\dot{F}^{\gamma_{1}, q}_{p}} <+ \infty,
\end{equation}
where the supremum is taken over all the cubes in $\mathbb{R}^{n}$.
\end{definition}

Let $0<p,q\leq \infty$ and $ \gamma_{1}, \gamma_{2}\in \mathbb{R}$.
There exists a sufficiently big integer $m^{\gamma_{1},\gamma_{2}}_{p,q}$
such that regular Daubechies can characterize such spaces.
We call a Daubechies wavelets $\Phi^{\epsilon}(x)$ regular if there exist
two integers $m_0\geq m^{\gamma_{1},\gamma_{2}}_{p,q}$ and $M$
such that
\begin{eqnarray}
&&\forall \epsilon\in \{0,1\}^n, \Phi^{\epsilon}(x)\in
C^{m_0}_0([-2^M, 2^M]^n);\\
&&\forall \epsilon\in E_{n}, \int x^{\alpha} \Phi^{\epsilon}(x)
dx=0,\forall |\alpha|\leq m_{0}.\label{canceling-condition}
\end{eqnarray} Using Daubechies wavelets, we have the following wavelet
characterization of $\dot{F}^{\gamma_{1},\gamma_{2}}_{p,q}$:

\begin{theorem}\label{lem:c}Given $0<p<\infty,0<q\leq \infty$ and $ \gamma_{1}, \gamma_{2}\in \mathbb{R}$.
$f(x)= \sum\limits_{(\epsilon,j,k)\in \Lambda_n} a^{\epsilon}_{j,k}
\Phi^{\epsilon}_{j,k}(x)\in \dot{F}^{\gamma_{1},\gamma_{2}}_{p,q}$
if and only if
\begin{equation}\label{eq:c}
\begin{array}{rl}
&\sup\limits_{ Q} |Q|^{\frac{\gamma_{2}}{n}-\frac{1}{p}}
\Big\|\Big(\sum\limits_{(\epsilon,j,k)\in\Lambda_{Q}^{n}}2^{qj(\gamma_{1}+\frac{n}{2})}|a^{\varepsilon}_{j,k}|^{q}\chi(2^{j}\cdot-k)\Big)^{1/q}
\Big\|_{L^{p}} <+ \infty,
\end{array}
\end{equation}
where the supremum is taken over all the dyadic cubes $Q$ in
$\mathbb{R}^{n}$.
\end{theorem}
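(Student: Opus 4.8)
The plan is to prove both directions by comparing the local oscillation norm $\|\varphi_Q(f-P_{Q,f})\|_{\dot F^{\gamma_1,q}_p}$ with the discrete square-function expression in \eqref{eq:c}. The natural strategy is: (i) reduce the infimum over $P_{Q,f}\in S^{\gamma_1,\gamma_2}_{p,q,f}$ to a concrete choice of polynomial built from the low-frequency wavelet coefficients; (ii) use the standard wavelet characterization of the homogeneous Triebel--Lizorkin space $\dot F^{\gamma_1,q}_p$ (the Frazier--Jawerth / Meyer-type identity, valid once the Daubechies wavelet has $m_0\geq m^{\gamma_1,\gamma_2}_{p,q}$ vanishing moments and smoothness) applied to the compactly supported function $\varphi_Q(f-P_{Q,f})$; and (iii) localize the resulting sum to the dyadic cubes contained in (a fixed dilate of) $Q$, i.e.\ to the index set $\Lambda_Q^n$, absorbing the error terms coming from wavelets whose support meets $\partial Q$ or which live at scales $2^j$ with $2^{-j}\gtrsim l(Q)$.

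First I would fix a cube $Q=Q(x_0,r)$ and, without loss of generality, a dyadic cube comparable to it; the equivalence of the supremum over all cubes with the supremum over dyadic cubes is routine and I would dispose of it at the outset. Given the wavelet expansion $f=\sum a^\epsilon_{j,k}\Phi^\epsilon_{j,k}$, I split $f = f_{\mathrm{near}} + f_{\mathrm{far}}$ where $f_{\mathrm{near}}$ collects the terms with $2^{-j}\lesssim r$ and $\mathrm{supp}\,\Phi^\epsilon_{j,k}\subset cQ$, and $f_{\mathrm{far}}$ is the rest. The candidate polynomial $P_{Q,f}$ is obtained by Taylor-expanding $f_{\mathrm{far}}$ (which is smooth on the support of $\varphi_Q$, since all contributing wavelets are either at coarse scales or supported away from $Q$) to order $m_0$ about $x_Q$; one checks $P_{Q,f}\in S^{\gamma_1,\gamma_2}_{p,q,f}$ using the moment conditions \eqref{canceling-condition} and the defining orthogonality relations of $S^{\gamma_1,\gamma_2}_{p,q,f}$. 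Then $\varphi_Q(f-P_{Q,f}) = \varphi_Q f_{\mathrm{near}} + \varphi_Q(f_{\mathrm{far}}-P_{Q,f})$, and the second piece has wavelet coefficients with rapid decay in $j$ and in the distance to $Q$, so it contributes a term dominated by the $\Lambda_Q^n$-sum (this is where I would invoke an almost-orthogonality / Schur-test estimate against the characteristic functions $\chi(2^j\cdot-k)$). For the main term $\varphi_Q f_{\mathrm{near}}$, apply the wavelet characterization of $\dot F^{\gamma_1,q}_p$: its $\dot F^{\gamma_1,q}_p$-norm is comparable to $\big\|\big(\sum 2^{qj(\gamma_1+n/2)}|a^\epsilon_{j,k}|^q\chi(2^j\cdot-k)\big)^{1/q}\big\|_{L^p}$ with the sum over $(\epsilon,j,k)\in\Lambda_Q^n$, modulo the usual boundary corrections handled by the $L^p$-bounded maximal function and the compact support of $\varphi_Q$. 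Multiplying by $|Q|^{\gamma_2/n-1/p}$ and taking the supremum over $Q$ gives $\lesssim$ in one direction; the reverse follows by running the same decomposition backwards, using that any admissible $P_{Q,f}$ only improves (decreases) the oscillation norm, so the infimum is attained up to a constant by our explicit choice.

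The main obstacle, and the step I would spend the most care on, is controlling the interaction between the local structure imposed by the cutoff $\varphi_Q$ and the nonlocal nature of the wavelet characterization of $\dot F^{\gamma_1,q}_p$: wavelets $\Phi^\epsilon_{j,k}$ at fine scales whose support straddles $\partial Q$ are neither fully "near" nor "far", and the polynomial subtraction must be calibrated so that $\varphi_Q(f-P_{Q,f})$ genuinely has the same fine-scale coefficients as $f$ inside $Q$ while the coarse-scale and exterior contributions remain summably small after weighting by $2^{j(\gamma_1+n/2)}$ and $|Q|^{\gamma_2/n-1/p}$. Making this precise requires the weight exponents to cooperate, which is exactly why $\gamma_1,\gamma_2$ enter the lower bound $m^{\gamma_1,\gamma_2}_{p,q}$ on the number of vanishing moments; I would track these constraints explicitly. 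Everything else — the cube-versus-dyadic reduction, the $L^p$ maximal function estimates, the quasi-triangle inequalities when $p$ or $q$ is less than $1$ — is standard and I would only sketch it.
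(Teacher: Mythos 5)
Your overall strategy is in the same spirit as the paper's (both directions reduce to the standard wavelet characterization of $\dot F^{\gamma_1,q}_p$ after comparing the local oscillation to a localized square function), but your forward direction does substantially more work than necessary, and the paper's route is worth internalizing because it is genuinely slicker. The paper does not decompose $f$ into near and far pieces, and it never constructs $P_{Q,f}$. Instead it fixes a dilate $\tilde Q$ of $Q$ with side length $2^{M+2}l(Q)$ and observes that for \emph{any} admissible polynomial $P$ and any index with $Q_{j,k}\subset Q$: since $\varphi_{\tilde Q}\equiv 1$ on $\operatorname{supp}\Phi^\epsilon_{j,k}$ (by the support size of a Daubechies wavelet and the choice of $\tilde Q$), and since $\Phi^\epsilon_{j,k}$ has vanishing moments up to order $m_0-1$, one has $\langle \varphi_{\tilde Q}P,\Phi^\epsilon_{j,k}\rangle=\langle P,\Phi^\epsilon_{j,k}\rangle=0$, whence $a^\epsilon_{j,k}=\langle f,\Phi^\epsilon_{j,k}\rangle=\langle \varphi_{\tilde Q}(f-P),\Phi^\epsilon_{j,k}\rangle$. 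The localized square function over $\Lambda_Q^n$ is then trivially dominated by the full square function of $\varphi_{\tilde Q}(f-P)$, which by the classical wavelet characterization is comparable to $\|\varphi_{\tilde Q}(f-P)\|_{\dot F^{\gamma_1,q}_p}$; taking the infimum over $P$ and using $|\tilde Q|\sim|Q|$ finishes the forward implication. None of your near/far split, Taylor expansion of $f_{\mathrm{far}}$, or almost-orthogonality estimates are needed in this direction.

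Two specific concerns about your version. First, you assert $P_{Q,f}\in S^{\gamma_1,\gamma_2}_{p,q,f}$ for your Taylor-polynomial candidate, but membership in that set requires the $\varphi_Q$-weighted moment conditions $\int x^\alpha\varphi_Q(f-P_{Q,f})\,dx=0$ for $|\alpha|\le m_0$, which a Taylor polynomial of $f_{\mathrm{far}}$ does not satisfy automatically; it would need a finite-dimensional linear-algebra correction. Second, and more importantly, the "main obstacle" you flag (interaction of the cutoff with the nonlocal wavelet characterization) is exactly what the paper's observation dissolves, because it identifies the coefficients on $\Lambda_Q^n$ exactly rather than approximately. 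On the other hand, your outline of the reverse direction is if anything more careful than the paper's: the paper's claimed identity $\phi_Q f=\phi_Q\sum_{i}\sum_{Q_{j,k}\subset Q_i}f^\epsilon_{j,k}\Phi^\epsilon_{j,k}$ does not hold for coarse scales (where wavelet supports exceed the $Q_i$), and a polynomial subtraction to remove those contributions is being used implicitly but is not spelled out. Your instinct to track the coarse-scale terms explicitly would in fact make that half of the argument airtight.
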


\begin{proof} we prove first that
$f(x)\in \dot{F}^{\gamma_{1},\gamma_{2}}_{p,q}$ implies $f(x)$
satisfies (\ref{eq:c}).
For any dyadic cube $Q$ with center $x_{Q}$ and side length $l(Q)$,
there exists a cube $\tilde{Q}$, parallel to the coordinate axis,
centered at $x_{Q}$ and with side length $2^{M+2}l(Q)$.
By definition of $\varphi_{Q}(x)$ and (\ref{canceling-condition}),
for such $\tilde{Q}$ and $ Q_{j,k}\subset Q, x\in\tilde{Q}$,
we have $f(x)= \varphi_{\tilde{Q}}(x) f(x)$ and
$$\int \big(\varphi_{\tilde{Q}}(y) P_{\tilde{Q}, f}(y)\big) \Phi^{\epsilon}_{j,k}(y) dy
=\int P_{\tilde{Q}, f}(y) \Phi^{\epsilon}_{j,k}(y) dy=0.$$
Hence,  for any $\epsilon\in E_{n}$ and $ Q_{j,k}\subset Q$, we have
$$\langle f, \Phi^{\epsilon}_{j,k}\rangle=
\langle \varphi_{\tilde{Q}}(f-P_{\tilde{Q}, f}),
\Phi^{\epsilon}_{j,k}\rangle.$$ By wavelet
characterization of Triebel-Lizorkin spaces, we have
$$\begin{array}{rl}
&\Big\|\Big(\sum\limits_{(\epsilon,j,k)\in\Lambda_{Q}^{n}}2^{qj(\gamma_{1}+\frac{n}{2})}|a^{\varepsilon}_{j,k}|^{q}\chi(2^{j}\cdot-k)\Big)^{1/q}
\Big\|_{L^{p}}\\
&\leq
C\inf\limits_{P_{\tilde{Q}, f}\in S^{\gamma_{1},\gamma_{2}}_{p,q,f}}
\|\varphi_{{\tilde{Q}}} (f-P_{\tilde{Q}, f})\|_{\dot{F}^{\gamma_{1},
q}_{p}}.
\end{array}$$
Hence (\ref{eq:c}) holds.

Conversely, for any cube $Q$, there exist $2^{n}$ dyadic cubes $Q_{i}$ such that
$2^{M+2}l(Q)\leq l(Q_{i})\leq 2^{M+3}l(Q)$
and
$$\begin{array}{rl}
&\phi_{Q}(x) f(x)= \phi_{Q}(x) \sum\limits^{2^n}_{i=1}
\sum\limits_{\epsilon\in E_{n},Q_{j,k}\subset Q_{i}}
f^{\epsilon}_{j,k} \Phi^{\epsilon}_{j,k}(x). \end{array}$$
 Hence
$$\begin{array}{rcl}
\|\phi_{Q} f\|_{\dot{F}^{\gamma_{1}, q}_{p}}& \leq & \Big\|\phi_{Q}
\sum\limits^{2^n}_{i=1} \sum\limits_{\epsilon\in
E_{n},Q_{j,k}\subset Q_{i}} f^{\epsilon}_{j,k} \Phi^{\epsilon}_{j,k}
\Big\|_{\dot{F}^{\gamma_{1}, q}_{p}}\\
&\leq & C\Big\|\sum\limits^{2^n}_{i=1} \sum\limits_{\epsilon\in
E_{n},Q_{j,k}\subset Q_{i}} f^{\epsilon}_{j,k}
\Phi^{\epsilon}_{j,k}\Big\|_{\dot{F}^{\gamma_{1}, q}_{p}}.
\end{array}$$
If  (\ref{eq:c}) holds, we get (\ref{eq:b}).
\end{proof}
\begin{remark}
If $\gamma_{2}=\frac{n}{p}$, $\dot{F}^{\gamma_{1},\gamma_{2}}_{p,q}$ becomes the Triebel-Lizorkin space $\dot{F}^{\gamma_{1},q}_{p}$.
Moreover, if $\gamma_{2}>\frac{n}{p}$, for any $f\in \dot{F}^{\gamma_{1},\gamma_{2}}_{p,q}$, the wavelet coefficients of $f$ are all zero. Hence $f$ is only a polynomial.
\end{remark}

By  Theorem \ref{lem:c}, we can identify a function with its wavelet
coefficients. That is to say,
$$\begin{array}{rl}
&\sum\limits_{(\epsilon,j,k)\in \Lambda_n}a^{\epsilon}_{j,k}
\Phi^{\epsilon}_{j,k}(x)\in
\dot{F}^{\gamma_{1},\gamma_{2}}_{p,q}\text{ if and only if }
\{a^{\epsilon}_{j,k}\}_{(\epsilon,j,k)\in \Lambda_n} \text{ satisfy
} (\ref{eq:c}). \end{array}$$
 Further, it is easy to
check the following results about Triebel type oscillation spaces.
\begin{corollary}\label{coro-Banach}
\label{cor:BMpropty} Given $0< p<\infty, 0<q\leq \infty$ and $\gamma_{1},
\gamma_{2}\in\mathbb{R}$.
\!\!\!\!
\begin{itemize}
\item[(i)]  The definition of $\dot{F}^{\gamma_{1},\gamma_{2}}_{p, q}$ is independent of the
choice of $\phi$.
\item[(ii)]
$\dot{F}^{\gamma_{1},\gamma_{2}}_{p,q}(\mathbb{R}^{n})$ are
Banach spaces for $1\leq p<\infty$ and $1\leq q\leq \infty$.

\end{itemize}
\end{corollary}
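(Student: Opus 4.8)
The plan is to deduce both statements from the wavelet characterization in Theorem \ref{lem:c}, thereby avoiding any direct manipulation of the oscillation quantity in Definition \ref{def1}. For part (i), I would argue as follows. By Theorem \ref{lem:c}, membership of $f=\sum_{(\epsilon,j,k)\in\Lambda_n}a^{\epsilon}_{j,k}\Phi^{\epsilon}_{j,k}$ in $\dot F^{\gamma_1,\gamma_2}_{p,q}$, together with the value of the norm up to a multiplicative constant, is governed entirely by the sequence condition (\ref{eq:c}), which makes no reference to $\varphi$ whatsoever. Hence if $\varphi_1$ and $\varphi_2$ are two admissible bump functions (both equal to $1$ on $B(0,\sqrt n)$, supported in $B(0,n)$), the two a priori different quantities in (\ref{eq:b}) are each comparable to the same $\varphi$-free expression (\ref{eq:c}), hence comparable to each other; the finiteness of one is equivalent to the finiteness of the other, and the resulting spaces coincide with equivalent norms. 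The only point requiring care is that the implicit constants in the proof of Theorem \ref{lem:c} depend on $\varphi$ (through the dilation $\widetilde Q$ and the Triebel--Lizorkin multiplier estimate $\|\varphi_Q g\|_{\dot F^{\gamma_1,q}_p}\le C\|g\|_{\dot F^{\gamma_1,q}_p}$); one should note that for a fixed admissible $\varphi$ these constants are uniform in the cube $Q$ by dilation and translation invariance of $\dot F^{\gamma_1,q}_p$, which is exactly what the proof of Theorem \ref{lem:c} already uses.

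For part (ii), assume $1\le p<\infty$ and $1\le q\le\infty$. I would first record that (\ref{eq:b}) defines a genuine norm in this range: homogeneity and the triangle inequality follow because, for each fixed cube $Q$, $f\mapsto |Q|^{\gamma_2/n-1/p}\inf_{P_{Q,f}}\|\varphi_Q(f-P_{Q,f})\|_{\dot F^{\gamma_1,q}_p}$ is a seminorm (the infimum of the seminorms $\|\varphi_Q(f-P)\|_{\dot F^{\gamma_1,q}_p}$ over the affine-type family of admissible $P$, using that $\dot F^{\gamma_1,q}_p$ is a normed space for $p,q\ge 1$ and that $S^{\gamma_1,\gamma_2}_{p,q,f}+S^{\gamma_1,\gamma_2}_{p,q,g}\subset S^{\gamma_1,\gamma_2}_{p,q,f+g}$), and the supremum over $Q$ of seminorms is again a seminorm; definiteness then follows from Theorem \ref{lem:c}, since a zero norm forces all wavelet coefficients to vanish (by testing against dyadic cubes $Q$ of every scale), hence $f=0$ as an element of the space. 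It then remains to prove completeness.

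For completeness I would transfer the problem to the sequence side. By Theorem \ref{lem:c} the map $f\mapsto\{a^{\epsilon}_{j,k}\}$ is an isometric isomorphism (up to the fixed equivalence constants) onto the sequence space $\dot f^{\gamma_1,\gamma_2}_{p,q}$ consisting of those $\{a^{\epsilon}_{j,k}\}_{(\epsilon,j,k)\in\Lambda_n}$ for which (\ref{eq:c}) is finite, so it suffices to show $\dot f^{\gamma_1,\gamma_2}_{p,q}$ is complete. Given a Cauchy sequence $\{a^{(m)}\}_m$ in $\dot f^{\gamma_1,\gamma_2}_{p,q}$, I would first extract, for each fixed index $(\epsilon,j,k)$, a limit $a^{\epsilon}_{j,k}:=\lim_m (a^{(m)})^{\epsilon}_{j,k}$ — this works because, choosing one dyadic cube $Q$ containing the support cube of the generator at scale $j$, the $\ell^q$-weighted local expression in (\ref{eq:c}) dominates a fixed positive multiple of $|(a^{(m)})^{\epsilon}_{j,k}-(a^{(m')})^{\epsilon}_{j,k}|$, so each coordinate is Cauchy in $\mathbb C$. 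Then, for each dyadic $Q$ and each finite truncation of the sum in (\ref{eq:c}), Fatou's lemma (applied in the $L^p$ and $\ell^q$ variables, valid since $p,q\ge1$) gives $\|a-a^{(m)}\|$ over that truncation $\le\limsup_{m'}\|a^{(m')}-a^{(m)}\|\le\varepsilon$ for $m$ large; taking the supremum over truncations and over $Q$ shows $a\in\dot f^{\gamma_1,\gamma_2}_{p,q}$ and $a^{(m)}\to a$. The main obstacle, and the step I would be most careful about, is precisely this interchange of the supremum over cubes with the limit in $m$: one cannot simply apply Fatou to the outer supremum, so the argument must fix $Q$ (and a finite sub-collection of indices) first, obtain the estimate uniformly in $Q$, and only then take the supremum — the uniformity being available because the bound $\varepsilon$ coming from the Cauchy condition does not depend on $Q$.
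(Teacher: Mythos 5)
Your proposal is correct and follows exactly the route the paper intends: the paper offers no written proof, merely noting that the corollary is "easy to check" once Theorem \ref{lem:c} identifies the space with a $\varphi$-free sequence space, and your argument is a careful fleshing-out of that reduction (equivalence of the two $\varphi$-dependent quantities to the common sequence expression for (i), and completeness of the sequence space via coordinatewise limits and Fatou for (ii)). The points you flag as delicate — uniformity of the constants in $Q$ and the order of quantifiers when passing the Cauchy estimate through the supremum over cubes — are indeed the only places where care is needed, and you handle them correctly.
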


\subsection{Calder\'on-Zygmund operators}
Now we introduce some preliminaries about Calder\'on-Zygmund
operators. See \cite{Me, MY}. For $x\neq y$, let $K(x,y)$
be a smooth function such that there exists a sufficiently large
$N_{0}\leq m_0$ satisfying that
\begin{equation}\label{regular-condition}
|\partial ^{\alpha}_{x}\partial ^{\beta}_{y}
K(x,y)| \leq \frac{C}{|x-y|^{(n+|\alpha|+|\beta|)}}, \forall |\alpha|+
|\beta|\leq N_{0}.
\end{equation}
\begin{definition}
 A linear operator
 $$Tf(x)=\int K(x,y) f(y) dy$$
 is said to be a
Calderon-Zygmund operator if
\begin{itemize}
\item[(i)] $T$ is continuous from
$C^{1}(\mathbb{R}^{n})$ to $(C^{1}(\mathbb{R}^{n}))'$;
\item[(ii)] the kernel $K$ satisfies (\ref{regular-condition});
\item[(iii)] $Tx^{\alpha}=T^{*}x^{\alpha}=0, \forall \alpha \in \mathbb{N}^{n}$.
\end{itemize}
 We denote by $ CZO(N_{0})$ the set of all operators satisfying (i), (ii) and (iii).
\end{definition}

From (\ref{regular-condition}), we can see that the kernel $K(\cdot,\cdot)$ may have high singularity on the diagonal $x=y$.
According to Schwartz kernel theorem,
$K(\cdot,\cdot)$ is a distribution in $S'(\mathbb{R}^{2n})$. For any $
(\epsilon,j,k), (\epsilon',j',k')\in \Lambda_{n}$, let
$$a^{\epsilon,\epsilon'}_{j,k,j',k'}= \langle K(\cdot,\cdot),
\Phi^{\epsilon}_{j,k} \Phi^{\epsilon'}_{j',k'}\rangle.$$ If $T\in  CZO(N_{0})$, its kernel $K(\cdot,\cdot)$ and $\{a^{\epsilon,\epsilon'}_{j,k,j',k'}\}$ satisfy the following relations. We refer the reader to
Meyer \cite{Me}, Meyer-Yang \cite{MY} and Yang \cite{Yang1} for the proofs.

\begin{lemma} \label{lem:CZO}
(i) If $T\in CZO(N_{0})$, then  the coefficients
$\{a^{\epsilon,\epsilon'}_{j,k,j',k'}\}_{(\epsilon,j,k), (\epsilon',j',k')\in \Lambda_{n}}$ satisfy the following
condition:
\begin{equation}\label{eq:a}
|a^{\epsilon,\epsilon'}_{j,k,j',k'}| \leq C
2^{-|j-j'|(\frac{n}{2}+N_{0})}
\Big(\frac{2^{-j}+2^{-j'}}{2^{-j}+2^{-j'}
+|k2^{-j}-k'2^{-j'}|}\Big)^{n+N_{0}}.
\end{equation}

(ii) If $\{a^{\epsilon,\epsilon'}_{j,k,j',k'}\}_{(\epsilon,j,k), (\epsilon',j',k')\in \Lambda_{n}}$ satisfy  (\ref{eq:a}), then
$$\begin{array}{rl}
&K(x,y)=\sum\limits_{(\epsilon,j,k)\in\Lambda_{n}}\sum\limits_{
(\epsilon',j',k')\in \Lambda_{n}}
a^{\epsilon,\epsilon'}_{j,k,j',k'}\Phi^{\epsilon}_{j,k}(x)
\Phi^{\epsilon'}_{j',k'}(y)\end{array}$$
 in the sense of distributions
and for any small positive real number $\delta$, $T\in
CZO(N_{0}-\delta)$.

\end{lemma}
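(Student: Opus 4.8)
\textbf{Proof proposal for Lemma \ref{lem:CZO}.}

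The plan is to treat the two parts symmetrically around the wavelet expansion of the kernel, using the cancellation and regularity hypotheses on $K$ to control the matrix entries $a^{\epsilon,\epsilon'}_{j,k,j',k'}$, and conversely using the almost-orthogonality bound (\ref{eq:a}) to reconstruct a Calder\'on--Zygmund kernel. For part (i), I would first dispose of the easy case $j=j'$ (or more generally $|j-j'|$ bounded): there one only needs the size bound $|K(x,y)|\le C|x-y|^{-n}$ together with the fact that $\Phi^{\epsilon}_{j,k}$ and $\Phi^{\epsilon'}_{j',k'}$ are supported in cubes of comparable size $2^{-j}$ and that at least one of them has vanishing moments up to order $N_0$; a Taylor expansion of $K$ in the variable whose wavelet has the cancellation, against the other variable, then produces the decay factor $\big((2^{-j}+2^{-j'})/(2^{-j}+2^{-j'}+|k2^{-j}-k'2^{-j'}|)\big)^{n+N_0}$. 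For the case $|j-j'|$ large, say $j>j'$, I would exploit the cancellation of $\Phi^{\epsilon}_{j,k}$ (the finer wavelet): write $a^{\epsilon,\epsilon'}_{j,k,j',k'}=\iint K(x,y)\Phi^{\epsilon}_{j,k}(x)\Phi^{\epsilon'}_{j',k'}(y)\,dx\,dy$, subtract from $K(x,y)$ its degree-$(N_0-1)$ Taylor polynomial in $x$ about the center $2^{-j}k$ of $\mathrm{supp}\,\Phi^{\epsilon}_{j,k}$, estimate the remainder by (\ref{regular-condition}) with $|\alpha|=N_0$, and integrate; the normalization $\|\Phi^{\epsilon}_{j,k}\|_{L^1}\sim 2^{-nj/2}$ and $\|\Phi^{\epsilon'}_{j',k'}\|_{L^1}\sim 2^{-nj'/2}$ yield exactly the gain $2^{-|j-j'|(n/2+N_0)}$ together with the stated spatial decay. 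The symmetric argument (expanding in $y$) handles $j'>j$, and this step — balancing the scale gain against the spatial localization, and checking that when the supports are far apart one uses size estimates while when they are close one uses the Taylor remainder — is the main technical point.

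For part (ii), the first task is to show that the double sum $\sum a^{\epsilon,\epsilon'}_{j,k,j',k'}\Phi^{\epsilon}_{j,k}(x)\Phi^{\epsilon'}_{j',k'}(y)$ converges to a distribution on $\mathscr{S}(\mathbb{R}^{2n})$: pair it against $u\otimes v$ with $u,v\in\mathscr{S}(\mathbb{R}^n)$, use that the wavelet coefficients of Schwartz functions decay rapidly in both $j$ and $k$, and combine with (\ref{eq:a}) to see the sum is absolutely convergent; off the diagonal $x\ne y$ the same bound gives pointwise convergence to a function satisfying (\ref{regular-condition}) with $N_0$ replaced by $N_0-\delta$ (one loses $\delta$ in differentiating the wavelets and summing the geometric-type series over $j,j'$). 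Then one must verify the three defining properties of $CZO(N_0-\delta)$: continuity $C^1\to (C^1)'$ follows from the distributional convergence just established; the kernel estimate (\ref{regular-condition}) is the off-diagonal computation; and the weak cancellation $Tx^{\alpha}=T^{*}x^{\alpha}=0$ for all $\alpha\in\mathbb{N}^n$ follows because each $\Phi^{\epsilon}_{j,k}$ with $\epsilon\in E_n$ has all moments up to order $N_0$ vanishing, so testing $T$ against a polynomial of degree $\le N_0$ kills every term, and then a limiting/density argument extends this to all polynomials. I would simply cite Meyer \cite{Me}, Meyer--Yang \cite{MY} and Yang \cite{Yang1} for the delicate summation bookkeeping, since the statement is classical, and present only the Taylor-expansion heart of the estimate in detail.

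The principal obstacle I anticipate is the careful case analysis in (i): one needs to decide, for each pair of multi-indices $(j,k)$ and $(j',k')$, whether the two wavelet supports overlap or are separated by distance $d\sim|k2^{-j}-k'2^{-j'}|$, and in the separated case one should not use the Taylor remainder at all but rather the raw size bound $|\partial^\alpha_x K|\le C d^{-(n+|\alpha|)}$, because the center of one support need not lie near the singularity; reconciling these two regimes so that a single clean bound of the form (\ref{eq:a}) emerges requires a short but nontrivial geometric argument about the ratio $(2^{-j}+2^{-j'})/(2^{-j}+2^{-j'}+d)$. Everything else — the convergence of the series in (ii), the loss of $\delta$, the verification of the $CZO$ axioms — is routine once (\ref{eq:a}) is in hand.
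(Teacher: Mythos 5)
The paper offers no proof of this lemma: it states the result and refers to Meyer, Meyer--Yang and Yang, so there is no in-paper argument to compare yours against. Your outline is the classical ``almost-diagonal wavelet matrix'' characterization, and the overall architecture (cancellation of the finer wavelet giving the gain $2^{-|j-j'|(\frac n2+N_0)}$, kernel decay giving the spatial factor, then reassembly and verification of the $CZO$ axioms in (ii)) is the correct one. But the reconstruction of part (i) has a genuine gap.

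The gap is the overlapping-support regime. When $j>j'$ and $\mathrm{supp}\,\Phi^{\epsilon}_{j,k}$ lies inside $\mathrm{supp}\,\Phi^{\epsilon'}_{j',k'}$ (which happens for roughly $2^{n(j-j')}$ values of $k$ per $k'$), Taylor-expanding $K(x,y)$ in $x$ about $2^{-j}k$ and bounding the remainder by $2^{-jN_0}|\xi-y|^{-(n+N_0)}$ via (\ref{regular-condition}) produces a $y$-integral that diverges, because $y$ ranges over a neighbourhood of the singular set $y=x$. The same problem already occurs at $j=j'$ with overlapping supports, where $\iint|x-y|^{-n}\,dx\,dy$ over a common cube is logarithmically divergent, so ``only the size bound'' does not suffice there either. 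The classical resolution is not a sharper use of the kernel estimate: one must invoke hypotheses (i) and (iii) of the definition of $CZO(N_0)$ --- the $C^1\to(C^1)'$ continuity (weak boundedness) to give meaning to the pairing near the diagonal, and $Tx^{\alpha}=0$ to prove that $T\Phi^{\epsilon'}_{j',k'}$ is a $C^{N_0}$ function near $2^{-j'}k'$ with the right bounds --- and only then pair this smooth function against the oscillating fine wavelet. Your sketch of (i) never uses these two hypotheses, and without them the conclusion is false: a paraproduct $\pi_b$ with $b\in BMO$ nonconstant has a kernel satisfying (\ref{regular-condition}), yet its wavelet matrix violates (\ref{eq:a}) by a factor of order $2^{(j-j')N_0}$ in the overlapping regime, precisely because $\pi_b1=b\neq0$. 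A secondary issue is in (ii): for Daubechies wavelets only finitely many moments vanish, so testing against polynomials only yields $Tx^{\alpha}=0$ for $|\alpha|$ up to the moment order, and no density argument upgrades this to all $\alpha\in\mathbb N^{n}$; this step is clean only for Meyer wavelets, where all moments vanish.
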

For $A>0$ and a sequence $f=:\{f_{j}\}$, we define the vector-valued maximal function $M_{A}(f)$ as
$$M_{A}(f)(x)=\Big(\sum_{j}M(|f_{j}|^{A})(x)\Big)^{1/A}.$$

 For $\{a^{\varepsilon}_{j,k}\}_{(\varepsilon,j,k)\in\Lambda_{n}}$, we set
$$f_{j'}=\sum\limits_{(\varepsilon,j,k)\in\Lambda_{n}}2^{j(s+\frac{n}{2})}|a^{\varepsilon}_{j,k}|\chi(2^{j}x-k).$$
Let
$$
g^{k}_{j,j'}=\left\{ \begin{aligned}
&\sum_{\varepsilon',k'}\frac{2^{j'(s+\frac{n}{2})}|a^{\varepsilon'}_{j',k'}|}{(1+|k'-2^{j'-j}k|)^{n+\gamma}},\ j\geq j', k\in\mathbb{Z}^{n};\\
&\sum_{\varepsilon',k'}\frac{2^{j'(s+\frac{n}{2})}|a^{\varepsilon'}_{j',k'}|}{(1+|k-2^{j-j'}k'|)^{n+\gamma}},\ j< j', k\in\mathbb{Z}^{n}.
\end{aligned}
\right.$$
Yang \cite{Yang1} obtained the following result.
\begin{lemma}{\rm (\cite{Yang1}, Chapter 5, Lemma 3.2)}
For any $\gamma>\frac{n}{A}+1$ and $x\in Q_{j,k}$, we have
$$g^{k}_{j,j'}=\left\{ \begin{aligned}
&CM_{A}(f_{j'})(x),\ \text{ if }j\geq j';\\
&C2^{\frac{n(j'-j)}{A}}M_{A}(f_{j'})(x),\ \text{ if } j< j'.
\end{aligned}
\right.$$
\end{lemma}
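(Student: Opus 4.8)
The plan is to estimate the quantity $g^k_{j,j'}$ by recognizing it as a discrete convolution of the sequence $\{2^{j'(s+\frac{n}{2})}|a^{\varepsilon'}_{j',k'}|\}_{k'}$ against the kernel $(1+|\cdot|)^{-n-\gamma}$ at the appropriate dyadic scale, and then to dominate this convolution pointwise by the Hardy--Littlewood maximal operator applied to the $A$-th power, exactly as in the classical argument for Triebel--Lizorkin sequence spaces. First I would treat the case $j\geq j'$. Fix $x\in Q_{j,k}$, so that $|2^{j'}x-k'|$ is comparable to $|k'-2^{j'-j}k|$ up to an additive constant (here one uses $j\geq j'$, so that $2^{j'}x$ lies within a bounded multiple of $Q_{j',k'}$'s scale of $k$). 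Thus $g^k_{j,j'}$ is comparable to $\sum_{\varepsilon',k'} 2^{j'(s+\frac{n}{2})}|a^{\varepsilon'}_{j',k'}| (1+|2^{j'}x-k'|)^{-n-\gamma}$, which in turn equals (up to constants) a sum of integrals of $f_{j'}$ against $(1+|2^{j'}x-y|)^{-n-\gamma}$, since $f_{j'}(y)=2^{j'(s+\frac{n}{2})}|a^{\varepsilon'}_{j',k'}|$ for $y\in Q_{j',k'}$ and the kernel varies by a bounded factor across each such cube.

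Next I would run the standard annular decomposition: split $\mathbb{R}^n$ into the ball $\{|2^{j'}x-y|\leq 1\}$ and the dyadic annuli $\{2^{\ell-1}<|2^{j'}x-y|\leq 2^\ell\}$, $\ell\geq 1$. On each piece, H\"older's inequality with exponents $A$ and $A'$ gives $\int_{A_\ell}|f_{j'}(y)|\,(1+|2^{j'}x-y|)^{-n-\gamma}\,dy \lesssim 2^{-\ell(n+\gamma)} \big(\int_{A_\ell}|f_{j'}|^{A}\big)^{1/A} |A_\ell|^{1/A'}$ when one works in the $y$-variable scaled so that $A_\ell$ has measure comparable to $2^{\ell n}$; bounding $\big(\int_{A_\ell}|f_{j'}|^{A}\big)^{1/A}$ by $(2^{\ell n} M(|f_{j'}|^A)(x))^{1/A}$ and summing the resulting geometric series $\sum_\ell 2^{-\ell(n+\gamma)} 2^{\ell n/A} 2^{\ell n/A'} = \sum_\ell 2^{-\ell(\gamma - n/A)}$, which converges precisely because $\gamma > n/A + 1 > n/A$. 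This yields $g^k_{j,j'} \leq C M_A(f_{j'})(x) = C\big(M(|f_{j'}|^A)(x)\big)^{1/A}$ for $j\geq j'$.

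For the case $j<j'$, the point $x\in Q_{j,k}$ now ranges over a cube that is large compared to the scale $2^{-j'}$ of the $k'$-lattice, so $|2^{j'}x-k'|$ is no longer comparable to $|k-2^{j-j'}k'|$; instead $|k-2^{j-j'}k'|$ is comparable to $2^{j-j'}|2^{j'}x - k'|$ (again up to additive constants absorbed into the $1+$). Hence $(1+|k-2^{j-j'}k'|)^{-n-\gamma}$ is comparable to $(1 + 2^{j-j'}|2^{j'}x-k'|)^{-n-\gamma}$, and carrying out the same annular estimate with this dilated kernel produces the extra gain $2^{n(j-j')}$ from the kernel's normalization, which one then rebalances: the Jensen/H\"older step costs back a factor $2^{n(j'-j)/A'}$, leaving the net factor $2^{n(j'-j)(1 - 1/A')} = 2^{n(j'-j)/A}$ in front of $M_A(f_{j'})(x)$, as claimed.

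The main obstacle is bookkeeping rather than a genuine difficulty: one must be careful about the change of variables relating the sum over $k'$ to an integral of $f_{j'}$, and about tracking which scale's lattice governs the comparison $|k'-2^{j'-j}k|\sim |2^{j'}x-k'|$ versus $|k-2^{j-j'}k'|\sim 2^{j-j'}|2^{j'}x-k'|$; getting the exponents of $2$ right in the $j<j'$ case (so that the kernel gain and the H\"older loss combine to exactly $2^{n(j'-j)/A}$) is the step most prone to error. Everything else is the textbook ``convolution with a radially decreasing $L^1$ kernel is controlled by the maximal function'' estimate, with the $A$-th power inserted so that the summability threshold on $\gamma$ matches $n/A+1$.
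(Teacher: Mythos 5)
The paper does not prove this lemma --- it only quotes it from \cite{Yang1}, Chapter 5, Lemma 3.2 --- so there is no in-paper argument to compare against; I will just evaluate your proposal on its own. The overall shape (rewrite the discrete sum as a convolution against a decaying kernel, decompose into dyadic annuli, sum a geometric series) is sound, but the central step is invalid in exactly the regime the lemma is designed for. You invoke H\"older's inequality with conjugate exponents $A$ and $A'$, i.e.\ $\int_{A_\ell}|f_{j'}|\,K \lesssim (\int_{A_\ell}|f_{j'}|^A)^{1/A}|A_\ell|^{1/A'}\sup K$. This is only a true inequality when $A\geq 1$; for $0<A<1$ the conjugate $A'=A/(A-1)$ is negative and the inequality reverses. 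But $M_A$ is introduced here precisely to cover $0<p,q<1$ (Theorem \ref{lem:CZcontinuity} allows $0<p,q\leq\infty$), where one must take $A<\min(p,q,1)<1$. In that range the pointwise bound $|f|\ast K \lesssim M_A(f)$ for a general $f$ is simply \emph{false} --- $M_A(f)\leq M(f)$ when $A<1$ --- and the lemma holds only because $f_{j'}$ is constant on dyadic cubes $Q_{j',k'}$ at scale $2^{-j'}$, a fact your argument never uses.

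The correct route for $A\leq 1$ is: apply the subadditivity of $t\mapsto t^A$ to the \emph{discrete} sum over $k'$, obtaining $(g^k_{j,j'})^A\leq \sum_{\varepsilon',k'}(2^{j'(s+n/2)}|a^{\varepsilon'}_{j',k'}|)^A(1+|k'-2^{j'-j}k|)^{-(n+\gamma)A}$ (and analogously for $j<j'$); then use piecewise constancy to write each numerator as $2^{j'n}\int_{Q_{j',k'}}|f_{j'}(y)|^A\,dy$ and observe that on $Q_{j',k'}$ the denominator is comparable to $(1+2^{\min(j,j')}|x-y|)^{(n+\gamma)A}$. This turns $(g^k_{j,j'})^A$ into a kernel average of $|f_{j'}|^A$ at scale $2^{-\min(j,j')}$; the standard estimate then gives $(g^k_{j,j'})^A\lesssim M(|f_{j'}|^A)(x)$ when $j\geq j'$ and $(g^k_{j,j'})^A\lesssim 2^{(j'-j)n}M(|f_{j'}|^A)(x)$ when $j<j'$ (the factor $2^{(j'-j)n}$ coming from normalizing the kernel at scale $2^{-j}$ rather than $2^{-j'}$), and taking $A$-th roots yields exactly the stated bounds. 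Note also a small arithmetic slip in your write-up: $2^{-\ell(n+\gamma)}2^{\ell n/A}2^{\ell n/A'}=2^{-\ell\gamma}$, not $2^{-\ell(\gamma-n/A)}$ (since $n/A+n/A'=n$); and in the $j<j'$ ``rebalancing'' the product $2^{n(j-j')}\cdot 2^{n(j'-j)/A'}$ equals $2^{-n(j'-j)/A}$, which has the wrong sign --- the loss should appear after cancelling the kernel normalization $2^{j'n}$ against the $2^{jn}$ needed to normalize the $(1+2^{j}|x-y|)^{-\cdot}$ kernel, as above. These slips do not change the qualitative conclusion, but the H\"older step is a genuine gap.
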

Following the idea of  \cite{MY, Yang1, YSY}, we
can prove that the Calder\'on-Zygmund operators are bounded on
$\dot{F}^{\gamma_{1},\gamma_{2}}_{p,q}$. For completeness, we give
the proof. In fact, for all $(\epsilon,j,k)\in
\Lambda_{n}$, denote
$$\begin{array}{rl}&\tilde{g}^{\epsilon}_{j,k} =\sum \limits_{
(\epsilon',j',k')\in \Lambda_{n}} a^{\epsilon,\epsilon'}_{j,k,j',k'}
g^{\epsilon'}_{j',k'}.\end{array}$$
By Lemma \ref{lem:CZO}, the boundedness of Calder\'on-Zygmund operators on $\dot{F}^{\gamma_{1}, \gamma_{2}}_{p,q}$ is equivalent to the following theorem.

\begin{theorem}\label{lem:CZcontinuity}
Given $0<p,q\leq \infty$ and $ \gamma_{1}, \gamma_{2}\in
\mathbb{R}$. There exists sufficiently big $N_0$ such that
 $\{a^{\epsilon,\epsilon'}_{j,k,j',k'} \}_{(\epsilon,j,k),\,
(\epsilon',j',k')\in \Lambda_{n}}$ satisfies
(\ref{eq:a}).  If $ \{g^{\epsilon}_{j,k}\}_{(\epsilon,j,k)\in
\Lambda_n} \in \dot{F}^{\gamma_{1},\gamma_{2}}_{p,q}$,
$\{\tilde{g}^{\epsilon}_{j,k}\}_{(\epsilon,j,k)\in \Lambda_n} \in
\dot{F}^{\gamma_{1},\gamma_{2}}_{p,q}$. Consequently,
Calder\'on-Zygmund operators are bounded on
$\dot{F}^{\gamma_{1},\gamma_{2}}_{p,q}$.
\end{theorem}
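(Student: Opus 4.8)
The plan is to reduce the boundedness of Calderón--Zygmund operators to a purely sequential estimate, and then to control the transformed coefficients $\tilde g^{\epsilon}_{j,k}$ by the vector-valued maximal function. By Theorem~\ref{lem:c}, membership in $\dot{F}^{\gamma_1,\gamma_2}_{p,q}$ is equivalent to the local square-function condition \eqref{eq:c} on wavelet coefficients, so it suffices to show that if $\{g^{\epsilon}_{j,k}\}$ satisfies \eqref{eq:c} then so does $\{\tilde g^{\epsilon}_{j,k}\}$, where $\tilde g^{\epsilon}_{j,k}=\sum_{(\epsilon',j',k')}a^{\epsilon,\epsilon'}_{j,k,j',k'}g^{\epsilon'}_{j',k'}$ and the coefficients $a$ satisfy the almost-diagonal bound \eqref{eq:a} with $N_0$ chosen large (to be specified below in terms of $\gamma_1,\gamma_2,p,q$).

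First I would fix a dyadic cube $Q$ and split the inner sum defining $\tilde g^{\epsilon}_{j,k}$ according to the scale relation between $j'$ and $j$, i.e. into $j'\ge j$ and $j'<j$, and further according to whether the cube $Q_{j',k'}$ lies inside a fixed dilate of $Q$ or not. Using \eqref{eq:a}, for each fixed $j$ the spatial sum over $k'$ is a convolution-type sum against the kernel $(1+|k2^{-j}-k'2^{-j'}|/(2^{-j}+2^{-j'}))^{-(n+N_0)}$, which by the lemma of Yang (Chapter~5, Lemma~3.2 in \cite{Yang1}) is dominated, for $x\in Q_{j,k}$, by $C\,M_A(f_{j'})(x)$ when $j\ge j'$ and by $C\,2^{n(j'-j)/A}M_A(f_{j'})(x)$ when $j<j'$, where $f_{j'}=\sum 2^{j'(\gamma_1+n/2)}|g^{\epsilon'}_{j',k'}|\chi(2^{j'}x-k')$ and $A$ is chosen with $0<A<\min(p,q)$ so that $\gamma:=N_0>n/A+1$. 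The scale factor $2^{-|j-j'|N_0}$ from \eqref{eq:a}, combined with the weight $2^{(j-j')\gamma_1}$ coming from normalizing to the $\gamma_1$-homogeneity in \eqref{eq:c}, then produces a geometrically decaying factor $2^{-|j-j'|\sigma}$ with $\sigma>0$ provided $N_0$ exceeds a constant depending on $\gamma_1$ and $n/A$; this geometric decay is what makes the $\ell^q$ sum over $j'$ converge.

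Next I would assemble these estimates: raising to the $q$-th power, summing over $j$, and using the geometric decay in $|j-j'|$ together with the $\ell^q$-triangle inequality (or Hölder, when $q\ge1$; the usual $\ell^q\subset\ell^1$ trick when $q\le1$), I obtain the pointwise bound
$$\Big(\sum_{(\epsilon,j,k)\in\Lambda^n_Q}2^{qj(\gamma_1+\frac n2)}|\tilde g^{\epsilon}_{j,k}|^q\chi(2^jx-k)\Big)^{1/q}\le C\Big(\sum_{j'}\big(M_A(f_{j'})(x)\big)^q\Big)^{1/q}$$
up to the contribution of far-away cubes, which I treat separately. Then I apply the Fefferman--Stein vector-valued maximal inequality in $L^{p/A}(\ell^{q/A})$, valid because $p/A>1$ and $q/A>1$, to bound the $L^p$ norm of the right-hand side by $C\big\|(\sum_{j'}|f_{j'}|^q)^{1/q}\big\|_{L^p}$, which is exactly the $\dot{F}^{\gamma_1,q}_p$ quantity in \eqref{eq:c} for the cube $Q$ (after reinstating the normalization $\chi(2^{j'}x-k')$). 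Multiplying by $|Q|^{\gamma_2/n-1/p}$ and taking the supremum over $Q$ gives $\{\tilde g^{\epsilon}_{j,k}\}\in\dot{F}^{\gamma_1,\gamma_2}_{p,q}$.

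The main obstacle I anticipate is the handling of the ``off-diagonal in space'' terms: wavelet coefficients $g^{\epsilon'}_{j',k'}$ associated with cubes $Q_{j',k'}$ far from $Q$ still contribute to $\tilde g^{\epsilon}_{j,k}$ for $(\epsilon,j,k)\in\Lambda^n_Q$, and their contribution is not controlled by the local norm on $Q$ alone but by the global supremum over all cubes. The fix is the standard one: cover the complement of a dilate of $Q$ by dyadic annuli of cubes $2^\ell Q$, estimate the contribution from the $\ell$-th annulus using the decay $(1+\mathrm{dist}/l(Q))^{-(n+N_0)}$ in \eqref{eq:a} against the local $\dot{F}^{\gamma_1,\gamma_2}_{p,q}$ norm on $2^\ell Q$ (which carries the extra volume factor $|2^\ell Q|^{1/p-\gamma_2/n}=2^{\ell n(1/p-\gamma_2/n)}$), and sum in $\ell$; this converges as long as $N_0$ is large enough relative to $|\gamma_2|$, $n$, and $1/p$. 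Bookkeeping the precise lower bound on $N_0$ (equivalently $m_0$) that makes all three geometric series converge simultaneously is the only genuinely delicate point, and it is exactly the ``sufficiently big $N_0$'' in the statement.
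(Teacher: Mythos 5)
Your proposal is correct and follows essentially the same route as the paper's proof: reduce to the coefficient estimate via Theorem \ref{lem:c}, use the almost-diagonal bound \eqref{eq:a} together with Yang's maximal-function lemma and the Fefferman--Stein inequality for the near-diagonal part, and control the spatially far and coarser-scale contributions by decomposing into dilates/translates of $Q$ and letting the decay in $N_0$ beat the volume factor $|2^{\ell}Q|^{1/p-\gamma_2/n}$ carried by the global norm. The paper's bookkeeping uses translated dilates $S_{\tau,l}$ with a H\"older trick in $\tau$ and $l$ rather than your dyadic annuli, but this is the same mechanism.
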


\begin{proof}
Let $Q$ be any dyadic cube with $|Q|=2^{-nj_{0}}$. For $\tau\geq1$, denote by $Q_{\tau}$ the dyadic cube satisfying $Q\subset Q_{\tau}$ and $|Q_{\tau}|=2^{n\tau}|Q|$. Specially, $Q_{0}=Q$. If $l\in\mathbb{Z}^{n}$ and $Q_{j',k'}\subset 2^{-j_{0}}l+Q_{\tau}$, we denote $Q_{j',k'}\in S_{\tau, l}$.
Then
we have
$$\begin{array}{rl}
\tilde{g}^{\varepsilon}_{j,k}&=\sum\limits_{(\varepsilon',j',k')\in\Lambda_{n}}a^{\varepsilon,\varepsilon'}_{j,k,j',k'}g^{\varepsilon'}_{j',k'}\\
&=\sum\limits_{\tau\geq0}\sum\limits_{l\in\mathbb{Z}^{n}}\sum\limits_{j'}\sum\limits_{Q_{j',k'}\in S_{\tau,l}}a^{\varepsilon,\varepsilon'}_{j,k,j',k'}g^{\varepsilon'}_{j',k'}.
\end{array}$$
We will prove
$$\begin{array}{rl}
I_{Q_{r}}=&|Q_{r}|^{\frac{\gamma_{2}}{n}-\frac{1}{p}}\Big\|\Big[\sum\limits_{j\geq-\log_{2}r}\sum\limits_{Q_{j,k}\subset Q_{r}}2^{qj(\gamma_{1}+\frac{n}{2})}\Big|\tilde{g}^{\varepsilon}_{j,k}\Big|^{q}\chi(2^{j}x-k)\Big]^{1/q}\Big\|_{p}<\infty.
\end{array}$$
By H\"older's inequality, for $\delta$ small enough, we have
$$\begin{array}{rl}
I_{Q_{r}}=&|Q_{r}|^{\frac{\gamma_{2}}{n}-\frac{1}{p}}\Big\|\Big[\sum\limits_{j\geq-\log_{2}r}\sum\limits_{Q_{j,k}\subset Q_{r}}2^{qj(\gamma_{1}+\frac{n}{2})}\Big|\sum\limits_{\tau\geq0}\sum\limits_{l\in\mathbb{Z}^{n}}\\
&\sum\limits_{j'}\sum\limits_{Q_{j',k'}\in S_{\tau,l}}a^{\varepsilon,\varepsilon'}_{j,k,j',k'}g^{\varepsilon'}_{j',k'}\Big|^{q}\chi(2^{j}x-k)\Big]^{1/q}\Big\|_{p}\\
\lesssim&|Q_{r}|^{\frac{\gamma_{2}}{n}-\frac{1}{p}}\Big\|\Big[\sum\limits_{j\geq-\log_{2}r}\sum\limits_{Q_{j,k}\subset Q_{r}}2^{qj(\gamma_{1}+\frac{n}{2})}\sum\limits_{\tau\geq0}\sum\limits_{l\in\mathbb{Z}^{n}}2^{\tau\delta}(1+|l|)^{n+\delta}\\
&\Big|\sum\limits_{j'}\sum\limits_{Q_{j',k'}\in S_{\tau,l}}a^{\varepsilon,\varepsilon'}_{j,k,j',k'}g^{\varepsilon'}_{j',k'}\Big|^{q}\chi(2^{j}x-k)\Big]^{1/q}\Big\|_{p}.
\end{array}$$

(1) For $\tau=0$,
$$\begin{array}{rl}
I_{Q_{r}}\lesssim&|Q_{r}|^{\frac{\gamma_{2}}{n}-\frac{1}{p}}\Big\|\Big[\sum\limits_{j\geq-\log_{2}r}\sum\limits_{Q_{j,k}\subset Q_{r}}2^{qj(\gamma_{1}+\frac{n}{2})}\sum\limits_{l\in\mathbb{Z}^{n}}(1+|l|)^{n+\delta}\\
&\Big|\sum\limits_{j'}\sum\limits_{Q_{j',k'}\in S_{0,l}}a^{\varepsilon,\varepsilon'}_{j,k,j',k'}g^{\varepsilon'}_{j',k'}\Big|^{q}\chi(2^{j}x-k)\Big]^{1/q}\Big\|_{p}.
\end{array}$$
If $|l|<8^{n}$, by the boundedness of Calder\'on-Zygmund operator on $\dot{F}^{\gamma_{1}, q}_{p}$, we have
$$\begin{array}{rl}
&|Q_{r}|^{\frac{\gamma_{2}}{n}-\frac{1}{p}}\Big\|\Big[\sum\limits_{j\geq-\log_{2}r}\sum\limits_{Q_{j,k}\subset Q_{r}}2^{qj(\gamma_{1}+\frac{n}{2})}\sum\limits_{|l|<8^{n}}(1+|l|)^{n+\delta}\\
&\quad\Big|\sum\limits_{j'}\sum\limits_{Q_{j',k'}\in S_{0,l}}a^{\varepsilon,\varepsilon'}_{j,k,j',k'}g^{\varepsilon'}_{j',k'}\Big|^{q}\chi(2^{j}x-k)\Big]^{1/q}\Big\|_{p}\\
&\lesssim|Q_{r}|^{\frac{\gamma_{2}}{n}-\frac{1}{p}}
\Big\|\sum\limits_{j'}\sum\limits_{Q_{j',k'}\in S_{0,l}}g^{\varepsilon'}_{j',k'}\Phi^{\varepsilon'}_{j',k'}\Big\|_{\dot{F}^{\gamma_{1}, q}_{p}}<\infty.
\end{array}$$

If $|l|>8^{n}$, because
$$|a^{\varepsilon,\varepsilon'}_{j,k,j',k'}|\lesssim 2^{-|j-j'|(\frac{n}{2}+N_{0})}\Big(\frac{2^{-j}+2^{-j'}}{2^{-j}+2^{-j'}+|2^{-j}k-2^{-j'}k'|}\Big)^{n+N_{0}},$$
$$\begin{array}{rl}
I_{Q_{r}}\lesssim&|Q_{r}|^{\frac{\gamma_{2}}{n}-\frac{1}{p}}\Big\|\Big[\sum\limits_{j\geq-\log_{2}r}\sum\limits_{Q_{j,k}\subset Q_{r}}2^{qj(\gamma_{1}+\frac{n}{2})}\sum\limits_{|l|>8^{n}}(1+|l|)^{n+\delta}\\
&\Big|\sum\limits_{j\geq j'}\sum\limits_{Q_{j',k'}\in S_{0,l}}a^{\varepsilon,\varepsilon'}_{j,k,j',k'}g^{\varepsilon'}_{j',k'}\Big|^{q}\chi(2^{j}x-k)\Big]^{1/q}\Big\|_{p}\\
+&|Q_{r}|^{\frac{\gamma_{2}}{n}-\frac{1}{p}}\Big\|\Big[\sum\limits_{j\geq-\log_{2}r}\sum\limits_{Q_{j,k}\subset Q_{r}}2^{qj(\gamma_{1}+\frac{n}{2})}\sum\limits_{|l|>8^{n}}(1+|l|)^{n+\delta}\\
&\Big|\sum\limits_{j<j'}\sum\limits_{Q_{j',k'}\in S_{0,l}}a^{\varepsilon,\varepsilon'}_{j,k,j',k'}g^{\varepsilon'}_{j',k'}\Big|^{q}\chi(2^{j}x-k)\Big]^{1/q}\Big\|_{p}\\
:=&I_{1}+I_{2}.
\end{array}$$
For $I_{1}$,
$$\begin{array}{rl}
|a^{\varepsilon,\varepsilon'}_{j,k,j',k'}|\lesssim&2^{-|j-j'|(\frac{n}{2}+N_{0})}(1+|2^{j'-j}k-k'|)^{-(n+N_{0})}\\
\lesssim&2^{-|j-j'|(\frac{n}{2}+N_{0})}(2^{j'-j_{0}}|l|)^{-(n+N_{0}/2)}(1+|2^{j'-j}k-k'|)^{-N_{0}/2}.
\end{array}$$
On the other hand, $Q_{j',k'}\in S_{0,l}$ implies $Q_{j',k'}=2^{-j}l+Q_{j,k}$. It is easy to see that $|k2^{j'-j}-k'|\sim 2^{j'-j_{0}}|l|$. Then
$$\begin{array}{rl}
I_{1}\lesssim&|Q_{r}|^{\frac{\gamma_{2}}{n}-\frac{1}{p}}\Big\|\Big[\sum\limits_{j\geq-\log_{2}r}\sum\limits_{Q_{j,k}\subset Q_{r}}2^{qj(\gamma_{1}+\frac{n}{2})}\sum\limits_{|l|>8^{n}}(1+|l|)^{n+\delta}\\
&\Big|\sum\limits_{j\geq j'}\sum\limits_{Q_{j',k'}\in S_{0,l}}
2^{-|j-j'|(\frac{n}{2}+N_{0})}(2^{j'-j_{0}}|l|)^{-(n+N_{0}/2)}\\
&(1+|2^{j'-j}k-k'|)^{-N_{0}/2}
|g^{\varepsilon'}_{j',k'}|\Big|^{q}\chi(2^{j}x-k)\Big]^{1/q}\Big\|_{p}\\
\lesssim&|Q_{r}|^{\frac{\gamma_{2}}{n}-\frac{1}{p}}\Big\|\Big[\sum\limits_{j\geq-\log_{2}r}\sum\limits_{Q_{j,k}\subset Q_{r}}2^{qj(\gamma_{1}+\frac{n}{2})}\sum\limits_{j\geq j'}2^{\delta(j-j')}2^{-q(j-j')(\frac{n}{2}+N_{0})}\\
&2^{-q(n+N_{0}/2)(j'-j_{0})}\Big(\sum\limits_{Q_{j',k'}\subset Q^{l}_{r}}\frac{|g^{\varepsilon'}_{j',k'}|}{(1+|2^{j'-j}k-k'|)^{N_{0}/2}}
\Big)^{q}\chi(2^{j}x-k)\Big]^{1/q}\Big\|_{p}\\
\lesssim&|Q_{r}|^{\frac{\gamma_{2}}{n}-\frac{1}{p}}\sum\limits_{|l|>8^{n}}(1+|l|)^{-N'}\Big\|\Big[\sum\limits_{j\geq-\log_{2}r}\sum\limits_{Q_{j,k}\subset Q_{r}}\sum\limits_{j>j'}2^{(j-j')[\delta+q(\gamma_{1}+\frac{n}{2})-q(\frac{n}{2}+N_{0})]}\\
&(M_{A}(f_{j'})(x))^{q}\chi(2^{j}x-k)\Big]^{1/q}\Big\|_{p},
\end{array}$$
where
$$\begin{array}{rl}
f_{j'}=2^{j'(\gamma_{1}+\frac{n}{2})}\sum\limits_{Q_{j',k'}\subset Q^{l}_{r}}|g^{\varepsilon'}_{j',k'}|\chi(2^{j'}x-k').
\end{array}$$
We can get
$$\begin{array}{rl}
&I_{1}\\
&\lesssim |Q_{r}|^{\frac{\gamma_{2}}{n}-\frac{1}{p}}\sum\limits_{|l|>8^{n}}(1+|l|)^{-N'}\Big\|\Big[\sum\limits_{j\geq-\log_{2}r}\sum\limits_{Q_{j,k}\subset Q_{r}}\sum\limits_{j>j'}2^{(j-j')[\delta+q(\gamma_{1}+\frac{n}{2})-q(\frac{n}{2}+N_{0})]}\\
&\quad \Big|2^{j'(\gamma_{1}+\frac{n}{2})}\sum\limits_{Q_{j',k'}\subset Q^{l}_{r}}|g^{\varepsilon'}_{j',k'}|\chi(2^{j'}x-k')\Big|^{q}\chi(2^{j}x-k)\Big]^{1/q}\Big\|_{p}\\
&\lesssim |Q_{r}|^{\frac{\gamma_{2}}{n}-\frac{1}{p}}\sum\limits_{|l|>8^{n}}(1+|l|)^{-N'}\Big\|\Big[\sum\limits_{j'\geq-\log_{2}r}
2^{qj'(\gamma_{1}+\frac{n}{2})}\sum\limits_{Q_{j',k'}\in S_{0, l}}|g^{\varepsilon'}_{j',k'}|^{q}\chi(2^{j'}x-k')\Big]^{1/q}\Big\|_{p}\\
&\lesssim \|g\|_{\dot{F}^{\gamma_{1},\gamma_{2}}_{p,q,}}.
\end{array}$$

For $I_{2}$, because $j<j'$, we have
$$\begin{array}{rl}
|a^{\varepsilon,\varepsilon'}_{j,k,j',k'}|\lesssim&2^{-|j-j'|(\frac{n}{2}+N_{0})}\Big(\frac{2^{-j}+2^{-j'}}{2^{-j}+2^{-j'}+|2^{-j}k-2^{-j'}k'|}\Big)^{n+N_{0}}\\
\lesssim&2^{-(j'-j)(\frac{n}{2}+N_{0})}\Big(1+|k-2^{j-j'}k'|\Big)^{-(n+N_{0})}.
\end{array}$$
Let $x_{0}$ be the center of $Q$.
$$\begin{array}{rl}
|k-2^{j-j'}k'|&=2^{j}|2^{-j}k-x_{0}-2^{-j_{0}}l+x_{0}+2^{-j_{0}}l-2^{-j'}k'|\\
&\geq 2^{j}(2^{-j_{0}}|l|-2^{-j_{0}}-2^{-j_{0}})\\
&\geq 2^{j-j_{0}}|l|.
\end{array}$$
Let
$$\begin{array}{rl}
f_{j'}=2^{j'(\gamma_{1}+\frac{n}{2})}\sum\limits_{Q_{j',k'}\in S_{0, l}}|g^{\varepsilon'}_{j',k'}|\chi(2^{j'}x-k').
\end{array}$$
Hence we get
$$\begin{array}{rl}
I_{2}\lesssim&\sum\limits_{|l|>8^{n}}(1+|l|)^{\frac{n+\delta}{q}-(n+\frac{N_{0}}{2})}|Q_{r}|^{\frac{\gamma_{2}}{n}-\frac{1}{p}}\Big\|\Big[\sum\limits_{j\geq j_{0}}\sum\limits_{Q_{j,k}\subset Q_{r}}2^{qj(\gamma_{1}+\frac{n}{2})}\\
&\Big|\sum\limits_{j'>j}2^{-(j'-j)(\frac{n}{2}+N_{0})}2^{-(n+\frac{N_{0}}{2})(j-j_{0})}\sum\limits_{Q_{j',k'}\in S_{0,l}}\frac{|g^{\varepsilon'}_{j',k'}|}{(1+|2^{j-j'}k'-k|)^{N_{0}/2}}\Big|^{q}\chi(2^{j}x-k)\Big]^{1/q}\Big\|_{p}\\
\lesssim&\sum\limits_{|l|>8^{n}}(1+|l|)^{-N'}|Q_{r}|^{\frac{\gamma_{2}}{n}-\frac{1}{p}}\Big\|\Big[\sum\limits_{j\geq j_{0}}\sum\limits_{Q_{j,k}\subset Q_{r}}2^{qj(\gamma_{1}+\frac{n}{2})}\sum\limits_{j'>j}2^{\delta(j'-j)}2^{-q(j'-j)(\frac{n}{2}+N_{0})}\\
&2^{-q(n+\frac{N_{0}}{2})(j-j_{0})}\Big|\sum\limits_{Q_{j',k'}\in S_{0,l}}\frac{|g^{\varepsilon'}_{j',k'}|}{(1+|2^{j-j'}k'-k|)^{N_{0}/2}}\Big|^{q}\chi(2^{j}x-k)\Big]^{1/q}\Big\|_{p}\\
\lesssim&\sum\limits_{|l|>8^{n}}(1+|l|)^{-N'}|Q_{r}|^{\frac{\gamma_{2}}{n}-\frac{1}{p}}\Big\|\Big[\sum\limits_{j\geq j_{0}}\sum\limits_{Q_{j,k}\subset Q_{r}}2^{qj(\gamma_{1}+\frac{n}{2})}\sum\limits_{j'>j}2^{\delta(j'-j)}2^{-q(j'-j)(\frac{n}{2}+N_{0})}\\
&2^{-q(n+\frac{N_{0}}{2})(j-j_{0})}2^{-qj'(\gamma_{1}+\frac{n}{2})}2^{qn(j'-j)/A}\Big|M_{A}(f_{j'})(x)\Big|^{q}\chi(2^{j}x-k)\Big]^{1/q}\Big\|_{p}\\
\lesssim& |Q_{r}|^{\frac{\gamma_{2}}{n}-\frac{1}{p}}\sum\limits_{|l|>8^{n}}(1+|l|)^{-N'}\Big\|\Big[\sum\limits_{j'\geq-\log_{2}r}
2^{qj'(\gamma_{1}+\frac{n}{2})}\sum\limits_{Q_{j',k'}\in S_{0, l}}|g^{\varepsilon'}_{j',k'}|^{q}\chi(2^{j'}x-k')\Big]^{1/q}\Big\|_{p}\\
\lesssim& \|g\|_{\dot{F}^{\gamma_{1},\gamma_{2}}_{p,q,}}.
\end{array}$$

(2) For $\tau\geq1$, we can get for $Q_{j,k}\subset Q$ and $Q_{j',k'}=2^{\tau-j_{0}}l+2^{\tau}Q$, $j'=j_{0-\tau}$. it is easy to see that
$j>j'$ for this case. If $|l|<8^{n}$ and $j>j'$,
$$\begin{array}{rl}
|a^{\varepsilon,\varepsilon'}_{j,k,j',k'}|\lesssim&2^{-|j-j'|(\frac{n}{2}+N_{0})}\Big(\frac{2^{-j}+2^{-j'}}{2^{-j}+2^{-j'}+|2^{-j}k-2^{-j'}k'|}\Big)^{n+N_{0}}\\
\lesssim&2^{-|j-j'|(\frac{n}{2}+N_{0})}\Big(\frac{2^{j'-j}+1}{2^{j'-j}+1+|2^{j'-j}k-k'|}\Big)^{n+N_{0}}\\
\lesssim&2^{-|j-j'|(\frac{n}{2}+N_{0})}2^{-\frac{\tau N_{0}}{2}}(1+|k'-2^{j'-j}k|)^{-(n+\frac{N_{0}}{2})}.
\end{array}$$
Let
$$\begin{array}{rl}
f_{j'}=2^{j'(\gamma_{1}+\frac{n}{2})}\sum\limits_{Q_{j',k'}\in S_{\tau, l}}|g^{\varepsilon'}_{j',k'}|\chi(2^{j'}x-k').
\end{array}$$
Hence
$$\begin{array}{rl}
I_{Q_{r}}\lesssim&\sum\limits_{|l|<8^{n}}(1+|l|)^{n+\delta}\sum\limits_{\tau\geq1}2^{\tau\delta}|Q_{r}|^{\frac{\gamma_{2}}{n}-\frac{1}{p}}
\Big\|\Big[\sum\limits_{j\geq j_{0}}\sum\limits_{Q_{j,k}\subset Q_{r}}2^{qj(\gamma_{1}+\frac{n}{2})}\\
&\Big|\sum\limits_{j>j'}\sum\limits_{Q_{j',k'}\in S_{\tau,l}}2^{-|j-j'|(\frac{n}{2}+\frac{N_{0}}{2})}2^{-\frac{\tau N_{0}}{2}}
\frac{|g^{\varepsilon'}_{j',k'}|}{(1+|k'-2^{j'-j}k|)^{(n+\frac{N_{0}}{2})}}\Big|^{q}\chi(2^{j}x-k)\Big]^{1/q}\Big\|_{p}\\
\lesssim&\sum\limits_{\tau\geq1}2^{\tau(\delta-\frac{N_{0}}{2})}\sum\limits_{|l|<8^{n}}(1+|l|)^{n+\delta}|Q_{r}|^{\frac{\gamma_{2}}{n}-\frac{1}{p}}
\Big\|\Big[\sum\limits_{j\geq j_{0}}\sum\limits_{Q_{j,k}\subset Q_{r}}2^{qj(\gamma_{1}+\frac{n}{2})}\\
&\sum\limits_{j>j'}2^{\delta(j-j')}2^{-q(j-j')(\frac{n}{2}+N_{0})}\Big|\sum\limits_{Q_{j',k'}\in S_{\tau, l}}\frac{|g^{\varepsilon'}_{j',k'}|}{(1+|k'-2^{j'-j}k|)^{(n+\frac{N_{0}}{2})}}\Big|^{q}\chi(2^{j}x-k)\Big]^{1/q}\Big\|_{p}\\
\lesssim&\sum\limits_{\tau\geq1}2^{\tau(\delta-\frac{N_{0}}{2})}\sum\limits_{|l|<8^{n}}(1+|l|)^{n+\delta}|Q_{r}|^{\frac{\gamma_{2}}{n}-\frac{1}{p}}
\Big\|\Big[\sum\limits_{j\geq j_{0}}\sum\limits_{Q_{j,k}\subset Q_{r}}\sum\limits_{j>j'}2^{(j-j')[\delta-qN_{0}+q\gamma_{1}]}\\
&(M_{A}(f_{j'})(x))^{q}\chi(2^{j}x-k)\Big]^{1/q}\Big\|_{p}\\
\lesssim&\|g\|_{\dot{F}^{\gamma_{1},\gamma_{2}}_{p,q,}}.
\end{array}$$

If $|l|>8^{n}$, for $Q_{j',k'}=2^{\tau-j_{0}}l+Q_{\tau}$,
$$|2^{j'-j}k-k'|=2^{j'}|2^{-j}k-2^{-j'}k'|\geq 2^{\tau+j'-j_{0}}|l|.$$
Then
$$|a^{\varepsilon,\varepsilon'}_{j,k,j',k'}|\lesssim2^{-|j-j'|(\frac{n}{2}+N_{0})}(2^{\tau+j'-j_{0}}|l|)^{-n+N_{0}/2}(1+|2^{j'-j}k-k'|)^{-\frac{N_{0}}{2}}.$$
 Let
$$\begin{array}{rl}
f_{j'}=2^{j'(\gamma_{1}+\frac{n}{2})}\sum\limits_{Q_{j',k'}\in S_{\tau, l}}|g^{\varepsilon'}_{j',k'}|\chi(2^{j'}x-k').
\end{array}$$
So
$$\begin{array}{rl}
I_{Q_{r}}\lesssim&\sum\limits_{\tau\geq1}2^{\tau(\delta-\frac{N_{0}}{2})}\sum\limits_{|l|>8^{n}}(1+|l|)^{-N'}|Q_{r}|^{\frac{\gamma_{2}}{n}-\frac{1}{p}}
\Big\|\Big[\sum\limits_{j\geq j_{0}}\sum\limits_{Q_{j,k}\subset Q_{r}}2^{qj(\gamma_{1}+\frac{n}{2})}\\
&\sum\limits_{j>j'}2^{\delta(j-j')}2^{-q(j-j')(\frac{n}{2}+N_{0})}2^{-q(j'-j_{0})N_{0}/2}\Big(\sum\limits_{Q_{j',k'}\in S_{\tau, l}}|g^{\varepsilon'}_{j',k'}|\\
&(1+|2^{j'-j}k-j'|)^{-(n+\frac{N_{0}}{2})}\Big)^{q}\chi(2^{j}x-k)\Big]^{1/q}\Big\|_{p}\\
\lesssim&\sum\limits_{\tau\geq1}2^{\tau(\delta-\frac{N_{0}}{2})}\sum\limits_{|l|>8^{n}}(1+|l|)^{-N'}|Q_{r}|^{\frac{\gamma_{2}}{n}-\frac{1}{p}}
\Big\|\Big[\sum\limits_{j\geq j_{0}}\sum\limits_{Q_{j,k}\subset Q_{r}}\sum\limits_{j>j'}2^{(j-j')[\delta+\frac{qn}{2}+q\gamma_{1}-\frac{qn}{2}-\frac{qN_{0}}{2}]}\\
&\Big(M_{A}(f_{j'})(x)\Big)^{q}\chi(2^{j}x-k)\Big]^{1/q}\Big\|_{p}\\
\lesssim&\|g\|_{\dot{F}^{\gamma_{1},\gamma_{2}}_{p,q,}}.
\end{array}$$

\end{proof}

For $i=1,2$ and two regular orthogonal wavelets basis
$\{\Phi^{i,\epsilon}_{j,k}(x)\}_{(\epsilon,j,k)\in \lambda_n}$,
$\forall (\epsilon,j,k), (\epsilon',j',k')\in \Lambda_{n}$, denote
$a^{\epsilon,\epsilon'}_{j,k,j',k'}= \langle
\Phi^{1,\epsilon}_{j,k}, \Phi^{2,\epsilon'}_{j',k'}\rangle$.
We know that $\{a^{\epsilon,\epsilon'}_{j,k,j',k'}
\}_{(\epsilon,j,k),\, (\epsilon',j',k')\in \Lambda_{n}}$ satisfies
the condition (\ref{eq:a}). According to Theorem
\ref{lem:CZcontinuity},  Lemma \ref{lem:c} is also true for Meyer
wavelets. The reader can also find a proof of the following result
in \cite{YSY}.
\begin{lemma}\label{lem:Me}
The wavelet characterization in Lemma \ref{lem:c} is also true for Meyer wavelets.
\end{lemma}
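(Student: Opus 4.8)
The plan is to reduce the statement about Meyer wavelets to the already-proven facts about Daubechies wavelets via the change-of-basis operator. First I would fix a regular Daubechies wavelet basis $\{\Phi^{1,\epsilon}_{j,k}\}_{(\epsilon,j,k)\in\Lambda_n}$ (with $m_0$ large relative to $\gamma_1,\gamma_2,p,q$) and the classical Meyer basis $\{\Phi^{2,\epsilon}_{j,k}\}_{(\epsilon,j,k)\in\Lambda_n}$, and form the matrix $a^{\epsilon,\epsilon'}_{j,k,j',k'}=\langle\Phi^{1,\epsilon}_{j,k},\Phi^{2,\epsilon'}_{j',k'}\rangle$. Both families are orthonormal wavelet bases of $L^2(\mathbb{R}^n)$ with rapidly decaying, smooth, cancelling elements, so the kernel $K(x,y)=\sum\Phi^{1,\epsilon}_{j,k}(x)\Phi^{2,\epsilon'}_{j',k'}(y)$ associated with this matrix is the kernel of the identity operator, which is trivially a Calder\'on--Zygmund operator; concretely, the coefficients satisfy the almost-diagonal decay estimate \eqref{eq:a} for every $N_0$ (the decay in $|j-j'|$ comes from the vanishing moments of one basis tested against the smoothness of the other, and the spatial decay comes from the compact support/rapid decay of both). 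This is exactly the hypothesis of Theorem \ref{lem:CZcontinuity}.

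The second step is to apply Theorem \ref{lem:CZcontinuity} twice, once with this matrix and once with its transpose/adjoint $\overline{a^{\epsilon',\epsilon}_{j',k',j,k}}$, which enjoys the same estimate \eqref{eq:a}. Given $f=\sum a^{2,\epsilon}_{j,k}\Phi^{2,\epsilon}_{j,k}$ with Meyer coefficients satisfying \eqref{eq:c}, the Daubechies coefficients of the same distribution are $a^{1,\epsilon}_{j,k}=\sum_{(\epsilon',j',k')} a^{\epsilon,\epsilon'}_{j,k,j',k'}\,a^{2,\epsilon'}_{j',k'}$, i.e.\ precisely the operator $\{g^{\epsilon}_{j,k}\}\mapsto\{\tilde g^{\epsilon}_{j,k}\}$ appearing in Theorem \ref{lem:CZcontinuity}. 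Hence the Daubechies coefficients also satisfy \eqref{eq:c}, so by Theorem \ref{lem:c} the condition "\eqref{eq:c} holds for the Meyer coefficients" implies $f\in\dot F^{\gamma_1,\gamma_2}_{p,q}$. The reverse implication is obtained by running the same argument with the roles of the two bases interchanged: if $f\in\dot F^{\gamma_1,\gamma_2}_{p,q}$ then its Daubechies coefficients satisfy \eqref{eq:c} by Theorem \ref{lem:c}, and applying the boundedness of the transpose matrix transports this to the Meyer coefficients. Combining the two directions yields the equivalence asserted in Lemma \ref{lem:Me}, and in particular shows the wavelet characterization is independent of which regular basis is used.

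The main obstacle, and the only point requiring genuine care, is verifying that the Daubechies-to-Meyer transition matrix really does satisfy \eqref{eq:a} with $N_0$ as large as needed: the Meyer wavelets are not compactly supported, so the spatial factor $\bigl(\frac{2^{-j}+2^{-j'}}{2^{-j}+2^{-j'}+|k2^{-j}-k'2^{-j'}|}\bigr)^{n+N_0}$ must be extracted from the Schwartz-class decay of $\Phi^{2,\epsilon'}$ together with the cancellation and compact support of $\Phi^{1,\epsilon}$ (and vice versa), while the frequency factor $2^{-|j-j'|(n/2+N_0)}$ comes from pairing vanishing moments on the finer scale against smoothness on the coarser scale. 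This is a standard almost-orthogonality computation; since the paper explicitly invokes \cite{Me, MY, Yang1} for the analogous statement relating $\mathrm{CZO}$ kernels to \eqref{eq:a}, I would simply cite those references for the estimate rather than reproduce it, and then the rest of the proof is a formal two-line application of Theorem \ref{lem:CZcontinuity} together with Theorem \ref{lem:c}.
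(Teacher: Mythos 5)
Your proposal is correct and follows essentially the same route as the paper: the authors likewise form the transition matrix $a^{\epsilon,\epsilon'}_{j,k,j',k'}=\langle\Phi^{1,\epsilon}_{j,k},\Phi^{2,\epsilon'}_{j',k'}\rangle$ between the two regular bases, observe that it satisfies the almost-diagonal estimate (\ref{eq:a}), and invoke Theorem \ref{lem:CZcontinuity} to transport the characterization of Theorem \ref{lem:c} to Meyer wavelets. Your additional remarks on applying the argument in both directions and on extracting the spatial decay despite the non-compact support of the Meyer wavelets only make explicit what the paper leaves to the cited references.
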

\section{ Triebel  type tent spaces}\label{sec3}
For the rest of this paper, we only use  classical tensorial Meyer wavelets.
In this section, we introduce two classes of Triebel type tent spaces which will be used in the well-posedness of Navier-Stokes equations.
 We would like
to remind the readers that, for wavelets $\{\Phi^{\epsilon}_{j,k}:
(\epsilon,j,k)\in \Lambda_{n}\}$, $2^{j}$ represents the range of
frequency $\xi$ and $2^{-j}k$ represents the range of position $x$
in some sense.

\subsection{Preliminaries relative to semigroup}

Throughout this section, we denote $N>0$ a fixed sufficient big real number.
For fixed $\beta>0$, we may choose a radial $\phi\in \mathscr S(\mathbb{R}^n)$
such that there exists $C_{\beta}>0$ satisfying
\begin{itemize}
\item[\rm(i)] $ \int_{\mathbb{R}^{n}} x^{\gamma} \phi(x) dx =0$\quad for all\ \ $\gamma\in \mathbb N^{n};$
\item[\rm(ii)] $ \int^{\infty}_{0} (\hat{\phi}(t^{\frac{1}{2\beta}}\xi))^{2} \frac{dt}{t} =1$\quad for all\ \ $\xi\neq 0;$
\item[\rm(iii)] $ \int^{\infty}_{0} \hat{\phi}( t^{\frac{1}{2\beta}}) e^{-t} \frac{dt}{t}=\frac 1{C_{\beta}}$.
\end{itemize}
See \cite[Lemma 1.1]{FJW} and \cite[Chap. 3, \S2]{Me}.

Define $\phi^{\beta}_{t}(x) = t^{-\frac{n}{2\beta}}\phi(t^{-\frac{1}{2\beta}} x)$.
Then $\hat{\phi}^{\beta}_{t}(\xi)= \hat{\phi}(t^{\frac{1}{2\beta}}\xi)$, and hence
\begin{equation*}
f(t,x):= e^{-t(-\Delta)^{\beta}} f(x) = K_{t}^{\beta}*f(x).
\end{equation*}
Since
$$\begin{array}{rl}
\hat{f}(\xi) &= C_{\beta}\int^{\infty}_{0} \hat{\phi}( t^{\frac{1}{2\beta}}) e^{-t} \frac{dt}{t} \hat{f}(\xi)\\
&=C_{\beta}\int^{\infty}_{0}
\hat{\phi}( t^{\frac{1}{2\beta}}|\xi|) e^{-t|\xi|^{2\beta}} \hat{f}(\xi) \frac{ dt}{t},\end{array}$$ we have
\begin{eqnarray}\label{Pull-back}
 \begin{array}{rl}
 f(x)& =C_{\beta}\int^{\infty}_{0}\int_{\mathbb{R}^n} f(t,x-y) \phi^{\beta}_{t}(y)dy\frac{dt}{t}:= \pi_{\phi}f(\cdot,x).\end{array}
\end{eqnarray}
For $(\epsilon, j,k) \in \Lambda_{n}$, let $a^{\epsilon}_{j,k}(t) =
\langle f(t,\cdot), \Phi^{\epsilon}_{j,k}\rangle$
and $a^{\epsilon}_{j,k} = \langle f, \Phi^{\epsilon}_{j,k}\rangle$. Then

$$\begin{array}{rl}
&f = \sum\limits_{(\epsilon, j,k)\in\Lambda_n} a^{\epsilon}_{j,k}
\Phi^{\epsilon}_{j,k} \text{ and } f(t,\cdot)= \sum\limits_{(\epsilon,
j,k)\in\Lambda_n} a^{\epsilon}_{j,k}(t) \Phi^{\epsilon}_{j,k} .
\end{array}$$

We first  express $a^{\epsilon}_{j,k}(t)$ by using
$a^{\epsilon'}_{j',k'}$. If $f(t,x)=K^{\beta}_{t}*f(x)$, then
$$\begin{array}{rl}
a^{\epsilon}_{j,k}(t)=& \sum\limits_{\epsilon',|j-j'|\leq 3, k'}
a^{\epsilon'}_{j',k'} \langle K^{\beta}_{t}
\Phi^{\epsilon'}_{j',k'},
\Phi^{\epsilon}_{j,k}\rangle\\
= &\sum\limits_{\epsilon',|j-j'|\leq 3, k'} a^{\epsilon'}_{j',k'}
\int e^{-t  |\xi|^{2\beta}} \hat{\Phi}^{\epsilon'} (2^{-j'}\xi)
\hat{\Phi}^{\epsilon} (2^{-j}\xi) e^{-i(2^{-j'}k'-2^{-j}k)\xi} d\xi\\
= &\sum\limits_{\epsilon',|j-j'|\leq 3, k'} a^{\epsilon'}_{j',k'}
\int e^{-t 2^{2j\beta} |\xi|^{2\beta}} \hat{\Phi}^{\epsilon'}
(2^{j-j'}\xi) \hat{\Phi}^{\epsilon} (\xi) e^{-i(2^{j-j'}k'-k)\xi}
d\xi.
\end{array}$$
Applying integration by parts,
we could control $a^{\epsilon}_{j,k}(t)$ by $\{ a^{\epsilon'}_{j',k'}\}$ as follows.

\begin{lemma}\label{lem3-1}
There exists a fixed small constant $\tilde c>0$ depending only on $\beta$ and
 the support of $\widehat{\Phi^{\epsilon}_{j,k}}$ such that
\begin{itemize}
\item[\rm(i)]  For $t 2^{2\beta j} \geq 1,$
$$\begin{array}{rl}
&|a^{\epsilon}_{j,k}(t)|\leq Ce^{-\tilde c t 2^{2j\beta}}
\sum\limits_{\epsilon',|j-j'|\leq 3, k'} |a^{\epsilon'}_{j',k'}|
(1+|2^{j-j'}k'-k)|)^{-N};\end{array}$$

\item[\rm(ii)] For $0\leq t 2^{2\beta j} \leq 1$,
$$\begin{array}{rl}
&|a^{\epsilon}_{j,k}(t)|\leq C \sum\limits_{|j-j'|\leq 3}
\sum\limits_{\epsilon',k'} |a^{\epsilon'}_{j',k'}|
(1+|2^{j-j'}k'-k|)^{-N}.\end{array}$$
\end{itemize}
\end{lemma}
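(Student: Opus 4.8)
The plan is to start from the exact expression for $a^{\epsilon}_{j,k}(t)$ obtained just before the statement, namely
$$
a^{\epsilon}_{j,k}(t)=\sum_{\epsilon',|j-j'|\leq 3,k'} a^{\epsilon'}_{j',k'}\,
I^{\epsilon,\epsilon'}_{j,j',k,k'}(t),\qquad
I^{\epsilon,\epsilon'}_{j,j',k,k'}(t)=\int e^{-t 2^{2j\beta}|\xi|^{2\beta}}\,
\widehat{\Phi^{\epsilon'}}(2^{j-j'}\xi)\,\widehat{\Phi^{\epsilon}}(\xi)\,
e^{-i(2^{j-j'}k'-k)\xi}\,d\xi .
$$
Everything reduces to pointwise bounds on the oscillatory integral $I^{\epsilon,\epsilon'}_{j,j',k,k'}(t)$, because once $|I^{\epsilon,\epsilon'}_{j,j',k,k'}(t)|\leq C\,\omega(t)\,(1+|2^{j-j'}k'-k|)^{-N}$ for the appropriate factor $\omega(t)$, summing over the finitely many $\epsilon'$ and the three values of $j'$ gives the claimed estimates verbatim. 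Note that the integrand is supported in a fixed compact annulus/ball in $\xi$ (the common support of $\widehat{\Phi^{\epsilon}}$ and $\widehat{\Phi^{\epsilon'}}(2^{j-j'}\cdot)$, which since $|j-j'|\le 3$ is contained in a fixed compact set away from the origin for $\epsilon\in E_n$, and near the origin only for the low-frequency factor), so there are no convergence issues and all derivatives of the amplitude are bounded uniformly in $j,j'$.

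The decay in $k$ is obtained by the standard non-stationary phase / integration-by-parts argument: write $e^{-i(2^{j-j'}k'-k)\xi}$ and integrate by parts $N$ times using the operator $L=\tfrac{1}{|2^{j-j'}k'-k|^2}\,(2^{j-j'}k'-k)\cdot\nabla_\xi$, which reproduces the exponential and lands on the amplitude $e^{-t2^{2j\beta}|\xi|^{2\beta}}\widehat{\Phi^{\epsilon'}}(2^{j-j'}\xi)\widehat{\Phi^{\epsilon}}(\xi)$. Each application costs a factor $|2^{j-j'}k'-k|^{-1}$ and differentiates the amplitude; since the amplitude and all its $\xi$-derivatives are bounded on the (fixed compact) support uniformly in $j,j',t$ — here one uses that $\partial_\xi^\gamma e^{-t2^{2j\beta}|\xi|^{2\beta}}$ stays bounded on the annulus because $t2^{2j\beta}|\xi|^{2\beta}$ and all its derivatives are controlled there (this is where the constant $\tilde c$ and the support condition enter) — one gets $|I^{\epsilon,\epsilon'}_{j,j',k,k'}(t)|\leq C_N (1+|2^{j-j'}k'-k|)^{-N}$. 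This already proves part (ii): for $0\le t2^{2\beta j}\le 1$ the amplitude is uniformly $C^N$ with bounds independent of $t$, so no extra $t$-factor appears, and one keeps the full sum over $|j-j'|\le 3$ and all $\epsilon',k'$.

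For part (i), when $t2^{2\beta j}\ge 1$, I want in addition the exponential gain $e^{-\tilde c t2^{2j\beta}}$. Since on the support of $\widehat{\Phi^{\epsilon}}$ one has $|\xi|\ge c_0>0$ (using $\epsilon\in E_n$, so the frequency support is bounded away from $0$), we have $e^{-t2^{2j\beta}|\xi|^{2\beta}}\le e^{-t2^{2j\beta}c_0^{2\beta}}=e^{-\tilde c t2^{2j\beta}}$ with $\tilde c:=c_0^{2\beta}$; factoring out, say, $e^{-\tfrac12\tilde c t2^{2j\beta}}$ and applying the integration-by-parts argument to the remaining amplitude $e^{-\tfrac12 t2^{2j\beta}|\xi|^{2\beta}}\widehat{\Phi^{\epsilon'}}(2^{j-j'}\xi)\widehat{\Phi^{\epsilon}}(\xi)$, one checks that the derivatives $\partial_\xi^\gamma$ of this remaining factor are still bounded uniformly: differentiating $e^{-\tfrac12 t2^{2j\beta}|\xi|^{2\beta}}$ brings down powers of $t2^{2j\beta}$, but each such power is multiplied by $e^{-\tfrac14\tilde c t2^{2j\beta}}$ (after splitting the exponent once more) and $s^{|\gamma|}e^{-c s}$ is bounded for $s\ge 0$. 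This yields $|I^{\epsilon,\epsilon'}_{j,j',k,k'}(t)|\le C e^{-\tilde c' t2^{2j\beta}}(1+|2^{j-j'}k'-k|)^{-N}$, and renaming $\tilde c'$ as $\tilde c$ and summing over the finite index set gives (i).

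The main obstacle — really the only genuine point to be careful about — is the uniformity of all these constants in $j$, $j'$, and $t$ simultaneously: one must verify that rescaling $\xi\mapsto 2^{j}\xi$ did not introduce hidden $j$-dependence, that the support of the amplitude is contained in a $j$-independent compact set (true because $|j-j'|\le3$), and that the bookkeeping $s^{|\gamma|}e^{-cs}\le C_\gamma$ is done with enough room in the exponent to absorb every differentiation of the Gaussian-type factor. Once the oscillatory integral estimate is proved with uniform constants, the passage to the stated bounds on $a^\epsilon_{j,k}(t)$ is immediate from the triangle inequality and the finiteness of $\{\epsilon'\}$ and $\{j':|j-j'|\le3\}$.
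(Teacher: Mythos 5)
Your proposal is correct and follows precisely the route the paper indicates (but does not spell out): the paper writes the integral formula for $a^{\epsilon}_{j,k}(t)$ and then states only ``Applying integration by parts, we could control $a^{\epsilon}_{j,k}(t)$'' before asserting the lemma. Your write‑up supplies the missing details — the non‑stationary phase operator $L$ to get the $(1+|2^{j-j'}k'-k|)^{-N}$ decay, the observation that for $\epsilon\in E_n$ the Meyer wavelet $\widehat{\Phi^\epsilon}$ is supported in a fixed annulus $|\xi|\ge c_0$ so that $e^{-t2^{2j\beta}|\xi|^{2\beta}}\le e^{-c_0^{2\beta}t2^{2j\beta}}$, and the bookkeeping $s^{|\gamma|}e^{-cs}\le C_\gamma$ to absorb the powers of $t2^{2j\beta}$ produced by differentiating the Gaussian‑type factor — all of which are exactly the ingredients needed and are handled correctly, including the crucial uniformity in $j,j',t$ coming from $|j-j'|\le3$.
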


Furthermore, if $f$ is obtained by (\ref{Pull-back}), then we could
express $a^{\epsilon}_{j,k}$ by $\{a^{\epsilon'}_{j',k'}(t)\}$ as
follows:

$$\begin{array}{rl}
& a^{\epsilon}_{j,k} = \int_{\mathbb{R}^{n+1}_{+}}
\sum\limits_{(\epsilon',j',k')\in\Lambda_n} a^{\epsilon'}_{j',k'}(t)
(\phi^{\beta}_{t}* \Phi^{\epsilon'}_{j',k'}(x))
\Phi^{\epsilon}_{j,k}(x)dx \frac{dt}{t}.
\end{array}$$
Similarly, we apply integration by party to obtain the following estimation.

\begin{lemma}
$$\begin{array}{rl}
& |a^{\epsilon}_{j,k}|\le C \sum\limits_{|j-j'|\le 3}
\int^{\infty}_{0} \big(\max\{t2^{2j' \beta}, t^{-1}2^{-2j'
\beta}\}\big)^{-N} \sum\limits_{(\epsilon',k')\in F_{n}}
\frac{|a^{\epsilon'}_{j',k'}(t) |} {(1+|2^{j-j'}k'-k|)^{N}
}\frac{dt}{t}.
\end{array}$$
\end{lemma}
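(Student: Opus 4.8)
The plan is to read the stated inequality off the reproducing identity for $a^{\epsilon}_{j,k}$ displayed just above, after estimating the kernel $G^{\epsilon,\epsilon'}_{j,k,j',k'}(t):=\langle \phi^{\beta}_{t}* \Phi^{\epsilon'}_{j',k'},\,\Phi^{\epsilon}_{j,k}\rangle$. By Plancherel, the identity $\widehat{\phi^{\beta}_{t}}(\xi)=\widehat\phi(t^{1/2\beta}\xi)$, and the substitution $\xi\mapsto 2^{j'}\xi$,
$$G^{\epsilon,\epsilon'}_{j,k,j',k'}(t)=c\,2^{n(j'-j)/2}\int_{\mathbb{R}^n}\widehat\phi\bigl(t^{1/2\beta}2^{j'}\xi\bigr)\,\widehat{\Phi^{\epsilon'}}(\xi)\,\overline{\widehat{\Phi^{\epsilon}}(2^{j'-j}\xi)}\,e^{-i(k'-2^{j'-j}k)\cdot\xi}\,d\xi.$$
Since the Meyer building blocks $\widehat{\Phi^{\epsilon}}$ are supported in a fixed annulus bounded away from the origin and from infinity, the integrand vanishes identically unless $2^{j}\sim 2^{j'}$, which forces $|j-j'|\le 3$; in particular the prefactor $2^{n(j'-j)/2}$ is then $O(1)$, and on the support of the integrand one has $|\xi|\sim 1$, so the argument of $\widehat\phi$ has size $\sim t^{1/2\beta}2^{j'}=(t\,2^{2\beta j'})^{1/2\beta}$.

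Next I would use the two-sided decay of $\widehat\phi$. Since $\phi$ is Schwartz and, by condition (i) in the construction of $\phi$, all moments of $\phi$ vanish, $\widehat\phi$ vanishes to infinite order at the origin and is rapidly decreasing at infinity, so $|\widehat\phi(\eta)|\le C_M\bigl(\max\{|\eta|,|\eta|^{-1}\}\bigr)^{-M}$ for every $M$; applied with $\eta=t^{1/2\beta}2^{j'}\xi$, $|\xi|\sim1$, this produces the factor $\bigl(\max\{t\,2^{2\beta j'},\,t^{-1}2^{-2\beta j'}\}\bigr)^{-N}$ (the exponent $1/2\beta$ being absorbed into the choice of $M$). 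For the spatial decay I would integrate by parts $N$ times in $\xi$, exploiting the phase $e^{-i(k'-2^{j'-j}k)\cdot\xi}$: a derivative falling on $\widehat{\Phi^{\epsilon'}}(\xi)$ or $\widehat{\Phi^{\epsilon}}(2^{j'-j}\xi)$ is harmless, while a derivative of $\widehat\phi\bigl(t^{1/2\beta}2^{j'}\xi\bigr)$ brings down a factor $(t\,2^{2\beta j'})^{1/2\beta}$, whose possible growth in the regime $t\,2^{2\beta j'}\ge1$ is reabsorbed by enlarging $M$ in the decay of $\widehat\phi$. This gives
$$|G^{\epsilon,\epsilon'}_{j,k,j',k'}(t)|\le C\,\bigl(\max\{t\,2^{2\beta j'},\,t^{-1}2^{-2\beta j'}\}\bigr)^{-N}\,(1+|k'-2^{j'-j}k|)^{-N},\qquad |j-j'|\le 3,$$
and since $|j-j'|\le3$ the last factor is comparable to $(1+|2^{j-j'}k'-k|)^{-N}$.

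Finally, inserting this bound into the reproducing identity for $a^{\epsilon}_{j,k}$, moving absolute values inside the sum and the $t$-integral, and summing over the finitely many $\epsilon'\in E_{n}$ and over $|j-j'|\le3$ yields precisely the claimed estimate. The genuinely delicate step is the integration by parts: one must control the competition between differentiating the $t$-dependent amplitude $\widehat\phi(t^{1/2\beta}2^{j'}\xi)$ and the two scale-$2^{j}$ symbols, and it is here that both the infinite vanishing order of $\widehat\phi$ (handling $t\,2^{2\beta j'}\le1$) and its Schwartz decay (handling $t\,2^{2\beta j'}\ge1$) are essential. A secondary, purely bookkeeping, matter is the legitimacy of the termwise manipulation of the double sum and the $t$-integral; this follows from the rapid decay just established together with a crude a priori bound on $\{a^{\epsilon'}_{j',k'}(t)\}$, which I would only mention.
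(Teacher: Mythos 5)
Your argument is correct and is precisely the route the paper indicates: the paper offers no written proof beyond the remark that one applies integration by parts to the reproducing identity for $a^{\epsilon}_{j,k}$, and your computation of the kernel $\langle \phi^{\beta}_{t}*\Phi^{\epsilon'}_{j',k'},\Phi^{\epsilon}_{j,k}\rangle$ on the Fourier side, with the support of the Meyer symbols forcing $|j-j'|\le 3$, the vanishing moments and Schwartz decay of $\phi$ producing the $\bigl(\max\{t2^{2j'\beta},t^{-1}2^{-2j'\beta}\}\bigr)^{-N}$ factor, and $N$-fold integration by parts in $\xi$ giving the spatial decay, supplies exactly the omitted details. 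No gaps; the bookkeeping points you flag (absorbing derivatives of the $t$-dependent amplitude, interchanging sum and integral) are handled adequately.
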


\subsection{Tent
spaces $\mathbb{F}^{\gamma_{1}, \gamma_{2}}_{p,q,m,m'}$ and $\mathbb{F}^{\gamma}_{\tau,\infty}$} %
\label{sec:tent}
For any $a(t,x)$
defined on $\mathbb{R}^{n+1}_{+}$, by wavelet theory there exists a
family $\{a^{\epsilon}_{j,k}(t)\}_{(\epsilon,j,k)\in \Lambda_{n}}$
such that $$\begin{array}{rl}a(t,x)= \sum\limits_{(\epsilon,j,k)\in \Lambda_{n}}
a^{\epsilon}_{j,k}(t) \Phi^{\epsilon}_{j,k}(x).\end{array}$$ Given
$\gamma_{1},\gamma_{2}\in \mathbb{R}$, $1<p<\infty$, $m\in\mathbb{R}$ and $m'>0$.
$t$-Triebel-Lizorkin-Morrey spaces are defined as follows:
$$\begin{array}{rl}
\mathbb{F}^{\gamma_{1}, \gamma_{2}}_{p,q,m,m'}&=\mathbb{F}^{\gamma_{1}, \gamma_{2}, I}_{p,q,m }\cap \mathbb{F}^{\gamma_{1}, \gamma_{2},II}_{p,q}
\cap\mathbb{F}^{\gamma_{1}, \gamma_{2}, III}_{p,q,m}\cap \mathbb{F}^{\gamma_{1}, \gamma_{2}, IV}_{p,q, m'},
\end{array},$$
where the spaces of $L^{\infty}$ type norm on $t$ are defined as follows:
\begin{itemize}
\item[(i)] $f\in \mathbb{F}^{\gamma_{1}, \gamma_{2}, I}_{p,q, m}(\mathbb{R}^{n+1}_{+})$, if
$$\begin{array}{rl}
|Q_{r}|^{\frac{\gamma_{2}}{n}-\frac{1}{p}}t^{m}\Big\|\Big(\sum\limits_{j\geq\max\left\{-\log_{2}r, -\frac{\log_{2}t}{2\beta}\right\}}\sum\limits_{Q_{j,k}\subset Q_{r}}
2^{qj(\gamma_{1}+\frac{n}{2}+2m\beta)}|a^{\varepsilon}_{j,k}(t)|^{q}\chi(2^{j}x-k)\Big)^{1/q}\Big\|_{p}<\infty.
\end{array}$$
\item[(ii)] $f\in \mathbb{F}^{\gamma_{1}, \gamma_{2}, II}_{p,q}(\mathbb{R}^{n+1}_{+})$, if
$$\begin{array}{rl}
|Q_{r}|^{\frac{\gamma_{2}}{n}-\frac{1}{p}}\Big\|\Big(\sum\limits_{-\log_{2}r<j<-\frac{\log_{2}t}{2\beta}}\sum_{Q_{j,k}\subset Q_{r}}
2^{qj(\gamma_{1}+\frac{n}{2})}|a^{\varepsilon}_{j,k}(t)|^{q}\chi(2^{j}x-k)\Big)^{1/q}\Big\|_{p}<\infty.\end{array}$$
\end{itemize}
the spaces of integration type norm on $t$ are defined as follows:
\begin{itemize}
\item[(i)] $f\in \mathbb{F}^{\gamma_{1}, \gamma_{2}, III}_{p,q, m,m'}(\mathbb{R}^{n+1}_{+})$, if
$$\begin{array}{rl}
|Q_{r}|^{\frac{\gamma_{2}}{n}-\frac{1}{p}}\Big\|\Big(\sum\limits_{(\varepsilon,j,k)\in\Lambda_{Q}^{n}}
2^{qj(\gamma_{1}+\frac{n}{2}+2m\beta)}\int^{r^{2\beta}}_{2^{-2j\beta}}|a^{\varepsilon}_{j,k}(t)|^{q}t^{qm}\frac{dt}{t}\chi(2^{j}x-k)\Big)\Big\|_{p}<\infty.
\end{array}$$
\item[(ii)] $f\in \mathbb{F}^{\gamma_{1}, \gamma_{2}, IV}_{p,q,m'}(\mathbb{R}^{n+1}_{+})$, if
$$\begin{array}{rl}
|Q_{r}|^{\frac{\gamma_{2}}{n}-\frac{1}{p}}\Big\|\Big(\sum\limits_{(\varepsilon,j,k)\in\Lambda_{Q}^{n}}
2^{qj(\gamma_{1}+\frac{n}{2}+2m'\beta)}\int_{0}^{2^{-2j\beta}}|a^{\varepsilon}_{j,k}(t)|^{q}t^{qm'}\frac{dt}{t}\chi(2^{j}x-k)\Big)\Big\|_{p}<\infty.
\end{array}$$
\end{itemize}

Then, we define $t$-Bloch spaces and $t$-$L^{\infty}$ spaces which can be applied to the proof of boundedness of the bilinear operators.
For $\gamma_1\in \mathbb{R}$ and $\tau>0$, we say that
$a(t,\cdot)$ belongs to {\it $t$-Bloch space} $\mathbb{F}^{\gamma_{1}}_{\tau,\infty}$ if
$a^{\epsilon}_{j,k}(t) := \langle a(t,\cdot), \Phi^{\epsilon}_{j,k}(x)\rangle$ satisfies
$$\sup_{(\epsilon,j,k)\in \Lambda_n}\bigg\{\sup_{t2^{2j\beta}\geq 1}
(t2^{2j\beta})^{\tau} 2^{\frac{nj}{2}} 2^{j\gamma_{1}}|a^{\epsilon}_{j,k}(t)|
+\sup_{0<t2^{2j\beta}\leq 1}
2^{\frac{nj}{2}} 2^{j\gamma_{1}}|a^{\epsilon}_{j,k}(t)|\bigg\}<\infty.$$
We say that $a(t,\cdot)$ belongs to {\it $t$-$L^{\infty}$ space} $\mathbb{F}^{\gamma_{1}}_{0,\infty}$ if
$$\sup_{t> 0} \sup_{j\in \mathbb{Z}, k\in \mathbb{Z}^n}
t^{\frac{-\gamma_{1}}{2\beta}} 2^{\frac{nj}{2}}\big|\big\langle a(t,\cdot),  \Phi^{0}_{j,k}\big\rangle\big|<\infty.$$





The following lemma can be obtained immediately,
\begin{lemma}
Given $1<p, q<\infty$, $\gamma_{1}, \gamma_{2}\in\mathbb{R}$, $m>p$ and $m', \tau>0$.
\begin{itemize}
\item[(i)] If $m>0$, $\mathbb{F}^{\gamma_{1}, \gamma_{2}}_{p,q,m,m'}\subset\mathbb{F}^{\gamma_{1}-\gamma_{2}}_{m, \infty}$.
\item[(ii)] If $-2\beta\tau<\gamma<0<\beta$, then $\mathbb{F}^{\gamma}_{\tau,\infty}\subset \mathbb{F}^{\gamma}_{0,\infty}$, where
$a(t, x)\in\mathbb{F}^{\gamma}_{0,\infty}$ if $t^{-\gamma/2\beta}2^{nj/2}|a^{0}_{j,k}(t)|\lesssim 1$.
\end{itemize}
\end{lemma}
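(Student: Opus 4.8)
The plan is to verify the two inclusions by working entirely at the level of wavelet coefficients, using the definitions of the four tent-space norms and the $t$-Bloch/$t$-$L^\infty$ norms recalled just above.

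\textbf{Part (i): $\mathbb{F}^{\gamma_{1}, \gamma_{2}}_{p,q,m,m'}\subset\mathbb{F}^{\gamma_{1}-\gamma_{2}}_{m, \infty}$.} First I would fix $(\epsilon,j,k)\in\Lambda_n$ and a scale $t>0$, and aim to bound $2^{nj/2}2^{j(\gamma_1-\gamma_2)}|a^{\epsilon}_{j,k}(t)|$ (with the extra factor $(t2^{2j\beta})^m$ in the range $t2^{2j\beta}\ge1$) by a constant times the norm $\|f\|_{\mathbb{F}^{\gamma_1,\gamma_2}_{p,q,m,m'}}$. The idea is to choose a single cube $Q_r$ adapted to the pair $(j,t)$: take $r$ comparable to $2^{-j}$ so that $Q_{j,k}\subset Q_r$ and $|Q_r|\sim 2^{-nj}$, hence $|Q_r|^{\gamma_2/n-1/p}\sim 2^{-j\gamma_2+nj/p}$. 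One then estimates the single term $2^{qj(\gamma_1+n/2+2m\beta)}|a^{\epsilon}_{j,k}(t)|^q\chi(2^jx-k)$ from below inside the $\ell^q$-sum appearing in the relevant piece of the norm — $\mathbb{F}^{\gamma_1,\gamma_2,I}_{p,q,m}$ when $t2^{2j\beta}\ge1$ and $\mathbb{F}^{\gamma_1,\gamma_2,II}_{p,q}$ when $t2^{2j\beta}\le1$ — and takes the $L^p$ norm over the cube $Q_{j,k}$ of measure $2^{-nj}$ on which $\chi(2^jx-k)=1$. Solving for $|a^\epsilon_{j,k}(t)|$, the powers of $2^j$ recombine: the $2^{nj/p}$ from $|Q_r|^{1/p}$ cancels the $2^{-nj/p}$ from taking the $L^p$ norm on $Q_{j,k}$, leaving exactly $2^{nj/2}2^{j(\gamma_1-\gamma_2)}$, and the $2^{2mj\beta}=(t2^{2j\beta})^m t^{-m}$ factor supplies precisely the $t^m(t2^{2j\beta})^m$ weight demanded by the $t$-Bloch norm (so the hypothesis $m>0$, in fact $m>p$, is what makes the exponent arithmetic work out and the $t^m$ in the $X_1$-norm appear). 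Taking the supremum over $(\epsilon,j,k)$ and $t$ gives the claimed embedding.

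\textbf{Part (ii): if $-2\beta\tau<\gamma<0<\beta$ then $\mathbb{F}^{\gamma}_{\tau,\infty}\subset \mathbb{F}^{\gamma}_{0,\infty}$.} Here one must pass from a bound on the $\Phi^\epsilon_{j,k}$-coefficients ($\epsilon\in E_n$) to a bound on the $\Phi^0_{j,k}$-coefficients. The natural route is to write $\Phi^0_{j,k}$ through the Calder\'on-type reproducing structure already used in the paper: since $a(t,\cdot)$ has wavelet expansion $\sum a^{\epsilon'}_{j',k'}(t)\Phi^{\epsilon'}_{j',k'}$ and $P_j=\sum_{k}\langle\cdot,\Phi^0_{j,k}\rangle\Phi^0_{j,k}$ telescopes as $P_j=\sum_{j'<j}Q_{j'}$, we have $a^0_{j,k}(t)=\langle a(t,\cdot),\Phi^0_{j,k}\rangle=\sum_{j'<j}\sum_{(\epsilon',k')\in F_n}a^{\epsilon'}_{j',k'}(t)\langle\Phi^{\epsilon'}_{j',k'},\Phi^0_{j,k}\rangle$. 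The overlap coefficients $\langle\Phi^{\epsilon'}_{j',k'},\Phi^0_{j,k}\rangle$ are, for Meyer wavelets, essentially supported on $|j-j'|\le$ const and decay rapidly in $|2^{j-j'}k'-k|$, so after summing the geometric series in $k'$ one is left with a sum over a bounded range of $j'<j$. The $t$-Bloch hypothesis gives $2^{nj'/2}2^{j'\gamma}|a^{\epsilon'}_{j',k'}(t)|\lesssim (t2^{2j'\beta})^{-\tau}$ when $t2^{2j'\beta}\ge1$ and $\lesssim1$ otherwise; splitting the $j'$-sum at $j'=-\frac{\log_2 t}{2\beta}$ and using $-2\beta\tau<\gamma<0$ to guarantee that both the geometric tail $\sum_{j'} (t2^{2j'\beta})^{-\tau}2^{j'(\gamma-\cdots)}$ and the complementary tail converge, one obtains $2^{nj/2}2^{j\gamma}|a^0_{j,k}(t)|\lesssim 1$, i.e. $a(t,\cdot)\in\mathbb{F}^{\gamma}_{0,\infty}$ (after reconciling the normalization $t^{-\gamma/2\beta}=2^{-j\gamma}$ valid at the borderline scale $t\sim2^{-2j\beta}$, and extending to all $t$ by the same two-regime estimate).

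\textbf{Main obstacle.} The routine exponent bookkeeping in Part (i) is harmless; the real care is needed in Part (ii), in controlling the cross-Gramian $\langle\Phi^{\epsilon'}_{j',k'},\Phi^0_{j,k}\rangle$ and, more delicately, in checking that the constraint $-2\beta\tau<\gamma<0$ is \emph{exactly} the condition making both pieces of the split $j'$-sum summable — the upper bound $\gamma<0$ controls the low-frequency ($t2^{2j'\beta}\le1$) regime and the lower bound $\gamma>-2\beta\tau$ the high-frequency regime, and one has to make sure the endpoint scale $2^{2j'\beta}\sim t^{-1}$ is handled consistently so that the $t^{-\gamma/2\beta}$ normalization in the definition of $\mathbb{F}^\gamma_{0,\infty}$ emerges with the right power. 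I expect this balancing of the two geometric series against the window $-2\beta\tau<\gamma<0$ to be the only substantive point; everything else follows from the rapid decay of Meyer wavelet inner products and the elementary inequalities already invoked earlier in the paper.
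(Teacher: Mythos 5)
Your Part (i) reproduces the paper's argument exactly: fix $(\epsilon,j,k)$ and $t$, take $r=2^{-j}$, isolate the single term in the $X_{1}$- (resp.\ $X_{2}$-) norm, and note that the factor $2^{nj/p}$ coming from $|Q_{r}|^{-1/p}$ cancels against $\|\chi(2^{j}\cdot-k)\|_{p}\sim 2^{-nj/p}$, which yields $|a^{\epsilon}_{j,k}(t)|\lesssim 2^{j(\gamma_{2}-\gamma_{1}-\frac{n}{2})}(t2^{2j\beta})^{-m}$ when $t2^{2j\beta}\geq1$ and $|a^{\epsilon}_{j,k}(t)|\lesssim 2^{-j(\gamma_{1}-\gamma_{2})}2^{-nj/2}$ when $t2^{2j\beta}\leq1$; this is precisely the paper's two cases. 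For Part (ii) the paper in fact gives \emph{no} argument -- its proof ends after the two cases establishing (i) -- so your sketch supplies a step the paper omits. Your mechanism is the right one: expand $a^{0}_{j,k}(t)$ against the $Q_{j'}$-pieces via $P_{j}=\sum_{j'<j}Q_{j'}$, insert the Bloch bounds $2^{nj'/2}2^{j'\gamma}|a^{\epsilon'}_{j',k'}(t)|\lesssim\min\{1,(t2^{2j'\beta})^{-\tau}\}$, and split the $j'$-sum at $t2^{2j'\beta}=1$: the low-frequency tail $\sum_{j'}2^{-j'\gamma}$ converges because $\gamma<0$, the high-frequency tail $\sum_{j'}t^{-\tau}2^{-j'(\gamma+2\beta\tau)}$ because $\gamma>-2\beta\tau$, and both are $\lesssim t^{\gamma/2\beta}$, which is exactly the normalization in $\mathbb{F}^{\gamma}_{0,\infty}$. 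One misstatement to correct: the cross-Gramian $\langle\Phi^{\epsilon'}_{j',k'},\Phi^{0}_{j,k}\rangle$ is \emph{not} supported on a band $|j-j'|\le C$; since $\hat{\Phi}^{0}(2^{-j}\cdot)$ lives on a ball of radius $\sim2^{j}$, it meets the frequency annulus of $\Phi^{\epsilon'}_{j',k'}$ for every $j'\le j+1$ -- this is precisely why $P_{j}$ telescopes over all $j'<j$. The error is harmless because the splitting you perform immediately afterwards is designed for the full infinite range of $j'$, and that is the one place where the hypothesis $-2\beta\tau<\gamma<0$ is genuinely used.
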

\begin{proof}

We divide the proof into two cases.

Case 1: $t2^{2j\beta}\geq1$. Because $f\in\mathbb{F}^{\gamma_{1},\gamma_{2}, I}_{p,q,m}$,
$$\begin{array}{rl}
&|Q_{r}|^{\frac{\gamma_{2}}{n}-\frac{1}{p}}t^{m}\Big\|\Big(\sum\limits_{j\geq\max\left\{-\log_{2}r, -\frac{\log_{2}t}{2\beta}\right\}}\sum\limits_{Q_{j,k}\subset Q_{r}}2^{qj(\gamma_{1}+\frac{n}{2}+2m\beta)}|a^{\varepsilon}_{j,k}(t)|^{q}\chi(2^{j}x-k)\Big)^{\frac{1}{q}}\Big\|_{p}<\infty.\end{array}$$
Fix $j$, $k$. We have
$$\begin{array}{rl}
&|Q_{r}|^{\frac{\gamma_{1}}{n}-\frac{1}{p}}t^{m}\Big(\int2^{pj(\gamma_{1}+\frac{n}{2}+2m\beta)}|a^{\varepsilon}_{j,k}(t)|^{p}\chi(2^{j}x-k)dx\Big)^{1/p}\\
\lesssim& |Q_{r}|^{\frac{\gamma_{2}}{n}-\frac{1}{p}}t^{m}2^{j(\gamma_{1}+\frac{n}{2}+2m\beta)}|a^{\varepsilon}_{j,k}(t)|2^{-jn/p}.
\end{array}$$
Let $r=2^{-j}$. We can get
$$2^{-j\gamma_{2}}2^{jn/p}2^{j\gamma_{1}}2^{jn/2}(t2^{2j\beta})^{m}|a^{\varepsilon}_{j,k}(t)|2^{-jn/p}\lesssim1,$$
that is,
$$|a^{\varepsilon}_{j,k}(t)|\lesssim 2^{j(\gamma_{2}-\gamma_{1}-\frac{n}{2})}(t2^{2j\beta})^{-m}.$$

Case 2: $0<t2^{2j\beta}<1$. Because $f\in\mathbb{F}^{\gamma_{1},\gamma_{2}, II}_{p,q}$, we can obtain that
$$\begin{array}{rl}
&|Q_{r}|^{\frac{\gamma_{2}}{n}-\frac{1}{p}}\Big\|\Big(\sum\limits_{-\log_{2}r<j<-\frac{\log_{2}t}{2\beta}}\sum\limits_{Q_{j,k}\subset Q_{r}}
2^{qj(\gamma_{1}+\frac{n}{2})}|a^{\varepsilon}_{j,k}(t)|^{q}\chi(2^{j}x-k)\Big)^{\frac{1}{q}}\Big\|_{p}\lesssim 1.
\end{array}$$
This implies that for fixed $\varepsilon$ and $k$,
$$|a^{\varepsilon}_{j,k}(t)|\lesssim 2^{-j(\gamma_{1}-\gamma_{2})}2^{-jn/2}.$$
\end{proof}



\section{Semigroup characterization 
and Riesz operators}\label{sec:semigroup}
In this section, we characterize $\dot{F}^{\gamma_{1}, \gamma_{2}}_{p,q}(\mathbb{R}^{n})$
by Triebel type tent spaces $\mathbb{F}^{\gamma_{1}, \gamma_{2}}_{p, q, m, m'}$.
And we prove the continuity of Riesz operators on these spaces.

\subsection{Semigroup characterization of $\dot{F}^{\gamma_{1}, \gamma_{2}}_{p, q}$
}\label{sec:semi}
For any dyadic cube $Q_{r}$ with side length $r$, we
denote $\widetilde Q_{r}$ the dyadic cube which contains $Q_r$ with
side length $2^{7}r$. For all $ w\in \mathbb{Z}^{n}$, we write
$\widetilde Q_r^w:= 2^{7}rw + \widetilde Q_{r}$ and say
$(\epsilon',k')\in S^{w,j'}_{Q_r}$ if $(\epsilon',j',k')\in
\Lambda_{n}$ and $Q_{j',k'}\subset \widetilde Q_r^w$.   We have

\begin{theorem}\label{thm-cha}
Given $\gamma_{1},\gamma_{2}\in\mathbb{R}$, $1<p<m<\infty$,
$\gamma_{1}-\gamma_{2}<0 <\beta$, $m'>0$ and
$\tau+\frac{\gamma_{1}-\gamma_{2}}{2\beta}>0$.
\begin{itemize}
\item[\rm(i)] If $f \in \dot{F}^{\gamma_{1},\gamma_{2}}_{p,q}$, then
$f*K^{\beta}_{t}\in \mathbb{F}^{\gamma_{1}, \gamma_{2}}_{p, q, m, m'};$
\item[\rm(ii)]  The operator $\pi_{\phi}$ is a bounded and surjective operator from
$\mathbb{F}^{\gamma_{1}, \gamma_{2}}_{p, q, m, m'}$ to
$\dot{F}^{\gamma_{1},\gamma_{2}}_{p,q}$.
\end{itemize}
\end{theorem}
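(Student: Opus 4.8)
The plan is to reduce both statements, via the wavelet characterization of Theorem \ref{lem:c} (valid for Meyer wavelets by Lemma \ref{lem:Me}), to estimates on the Meyer wavelet coefficients $a^\epsilon_{j,k}(t)$ of $F(t,\cdot)=e^{-t(-\Delta)^\beta}f$ and on the coefficients $b^\epsilon_{j,k}$ of $\pi_\phi a$. Since the Meyer wavelets and the auxiliary $\phi$ of Section \ref{sec3} have compactly supported Fourier transforms, both $e^{-t(-\Delta)^\beta}$ and $\pi_\phi$ act almost diagonally in frequency, so only indices with $|j-j'|\le 3$ survive; the relevant coefficient bounds are then Lemma \ref{lem3-1} and the lemma immediately following it. I will take the fixed decay exponent $N$ of Section \ref{sec3} as large as needed and fix $A<\min\{p,q\}$ so that Yang's maximal-function lemma and the Fefferman--Stein vector-valued maximal inequality apply, and I abbreviate $B^\epsilon_{j,k}:=\sum_{|j-j'|\le 3}\sum_{\epsilon',k'}|a^{\epsilon'}_{j',k'}|\,(1+|2^{j-j'}k'-k|)^{-N}$.

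For part (i) I would fix a dyadic cube $Q_r$ and treat the four pieces separately. On the range $t2^{2j\beta}\ge 1$ (where $X_1$ and $X_3$ live) Lemma \ref{lem3-1}(i) gives $|a^\epsilon_{j,k}(t)|\lesssim e^{-\tilde c\,t2^{2j\beta}}B^\epsilon_{j,k}$, and on $t2^{2j\beta}<1$ (where $X_2$ and $X_4$ live) Lemma \ref{lem3-1}(ii) gives $|a^\epsilon_{j,k}(t)|\lesssim B^\epsilon_{j,k}$. For $X_1$ the built-in weight $t^m2^{2mj\beta}=(t2^{2j\beta})^m$ is absorbed by $(t2^{2j\beta})^m e^{-\tilde c\,t2^{2j\beta}}\lesssim 1$; for $X_2$ there is no weight; for $X_3$, after inserting Lemma \ref{lem3-1}(i) the substitution $s=t2^{2j\beta}$ turns $\int_{2^{-2j\beta}}^{r^{2\beta}}e^{-q\tilde c\,t2^{2j\beta}}t^{qm}\,\tfrac{dt}{t}$ into $2^{-2jqm\beta}$ times a convergent integral, so the weight $2^{qj(\gamma_1+\frac n2+2m\beta)}$ collapses to $2^{qj(\gamma_1+\frac n2)}$; for $X_4$ the identity $\int_0^{2^{-2j\beta}}t^{qm'}\,\tfrac{dt}{t}\simeq 2^{-2jqm'\beta}$ (here $m'>0$ enters) collapses $2^{qj(\gamma_1+\frac n2+2m'\beta)}$ to $2^{qj(\gamma_1+\frac n2)}$ in the same way. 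In all four cases one is left with $|Q_r|^{\frac{\gamma_2}{n}-\frac1p}\big\|\big(\sum_{Q_{j,k}\subset Q_r}2^{qj(\gamma_1+\frac n2)}|B^\epsilon_{j,k}|^q\chi(2^jx-k)\big)^{1/q}\big\|_p$, which I would bound by $\|f\|_{\dot F^{\gamma_1,\gamma_2}_{p,q}}$ by splitting $B^\epsilon_{j,k}=\sum_{w\in\mathbb{Z}^n}B^{\epsilon,w}_{j,k}$ according to which shifted enlarged cube $\widetilde Q_r^w=2^7rw+\widetilde Q_r$ contains $Q_{j',k'}$, extracting $(1+|w|)^{-N'}$ from the kernel decay, summing in $w$, and applying Yang's lemma (passing from the $k'$-kernel to $M_A$ of the appropriate wavelet packet), Fefferman--Stein, and Theorem \ref{lem:c} on $\widetilde Q_r^w$ (using $|\widetilde Q_r^w|\simeq|Q_r|$). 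This gives $F\in\mathbb{F}^{\gamma_1,\gamma_2}_{p,q,m,m'}$.

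For part (ii), surjectivity comes for free: the reproducing formula (\ref{Pull-back}) says $f=\pi_\phi\big(e^{-t(-\Delta)^\beta}f\big)$, and $e^{-t(-\Delta)^\beta}f\in\mathbb{F}^{\gamma_1,\gamma_2}_{p,q,m,m'}$ by part (i). For the boundedness of $\pi_\phi$ I would take $a\in\mathbb{F}^{\gamma_1,\gamma_2}_{p,q,m,m'}$, compute $b^\epsilon_{j,k}=\langle\pi_\phi a,\Phi^\epsilon_{j,k}\rangle$ and, repeating the almost-diagonality estimate of the lemma following Lemma \ref{lem3-1} for general $a$, obtain $|b^\epsilon_{j,k}|\lesssim\sum_{|j-j'|\le3}\int_0^\infty\big(\max\{t2^{2j'\beta},t^{-1}2^{-2j'\beta}\}\big)^{-N}\sum_{\epsilon',k'}\frac{|a^{\epsilon'}_{j',k'}(t)|}{(1+|2^{j-j'}k'-k|)^N}\,\tfrac{dt}{t}$. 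Then I would split the $t$-integral as $(0,2^{-2j'\beta})\cup(2^{-2j'\beta},r^{2\beta})\cup(r^{2\beta},\infty)$: on $(2^{-2j'\beta},r^{2\beta})$, H\"older in $t$ (exponents $q,q'$) against $t^{qm}\tfrac{dt}{t}$ with $\int_{2^{-2j\beta}}^{r^{2\beta}}(t2^{2j\beta})^{-Nq'}t^{-mq'}\tfrac{dt}{t}\lesssim 2^{2j\beta mq'}$ (using $N+m>0$) reproduces the $X_3$-summand; on $(0,2^{-2j'\beta})$, the analogous H\"older against $t^{qm'}\tfrac{dt}{t}$ with $\int_0^{2^{-2j\beta}}(t2^{2j\beta})^{Nq'}t^{-m'q'}\tfrac{dt}{t}\lesssim 2^{2j\beta m'q'}$ (using $N>m'$) reproduces the $X_4$-summand, after which the $k'$-kernel with $|j-j'|\le 3$ is absorbed via Yang's lemma and Fefferman--Stein exactly as in part (i), giving $\|a\|_{X_3}$ and $\|a\|_{X_4}$; and on the tail $(r^{2\beta},\infty)$, where the integration-type norms carry no information, I would use the embedding $\mathbb{F}^{\gamma_1,\gamma_2}_{p,q,m,m'}\subset\mathbb{F}^{\gamma_1-\gamma_2}_{m,\infty}$ of Section \ref{sec3} (legitimate since $1<p<m$), giving $|a^{\epsilon'}_{j',k'}(t)|\lesssim 2^{j'(\gamma_2-\gamma_1-\frac n2)}(t2^{2j'\beta})^{-m}\|a\|_{\mathbb{F}}$, so that $\int_{r^{2\beta}}^\infty(t2^{2j'\beta})^{-(N+m)}\tfrac{dt}{t}\simeq(r2^{j'})^{-2\beta(N+m)}$ yields $2^{j(\gamma_1+\frac n2)}|b^\epsilon_{j,k}|\lesssim 2^{j\gamma_2}(r2^j)^{-2\beta(N+m)}\|a\|_{\mathbb{F}}$, whose $\dot F$-contribution is a geometric sum evaluating, for $N$ large, to a constant multiple of $\|a\|_{\mathbb{F}}$. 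Combining the three regions gives $\|\pi_\phi a\|_{\dot F^{\gamma_1,\gamma_2}_{p,q}}\lesssim\|a\|_{\mathbb{F}^{\gamma_1,\gamma_2}_{p,q,m,m'}}$.

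The hard part, I expect, is not any single estimate but the bookkeeping: keeping the spatial localizations honest when the wavelet kernels spill over $Q_r$ (the $\widetilde Q_r^w$-decomposition and the summation in $w$), and arranging the parameters so that every $t$-integral produces exactly the scaling factor $2^{\,\cdots j\beta}$ demanded by whichever of $X_1,\dots,X_4$ is in play. The one place where a conceptually different argument is forced is the tail $t>r^{2\beta}$ in part (ii), where only the $L^\infty$-in-$t$ (Bloch) part of the $\mathbb{F}$-norm is available.
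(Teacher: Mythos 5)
Your proposal follows essentially the same route as the paper: part (i) is handled by the coefficient bounds of Lemma \ref{lem3-1}, absorption of the $t$-weights via $(t2^{2j\beta})^{m}e^{-\tilde c\,t2^{2j\beta}}\lesssim 1$ and the elementary $t$-integrals, followed by the shifted-cube decomposition in $w$, Yang's maximal lemma and Fefferman--Stein; part (ii) uses the same three-way split of the $t$-integral at $2^{-2j'\beta}$ and $r^{2\beta}$, with H\"older in $t$ recovering the $X_3$- and $X_4$-norms and the Bloch-type embedding handling the tail $t>r^{2\beta}$, and surjectivity from the reproducing formula (\ref{Pull-back}) combined with part (i). This matches the paper's argument step for step (your explicit absorption of $(t2^{2j\beta})^{m}$ by the exponential in the $X_1$ estimate is in fact slightly more careful than the paper's write-up).
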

\begin{proof}
(1) We divide the proof into four parts.

{\bf Part I.} $K^{\beta}_{t}\ast f\in\mathbb{F}^{\gamma_{1}, \gamma_{2}, I}_{p, q, m}$.
Because $$|a^{\varepsilon}_{j,k}(t)|\leq e^{-ct2^{2j\beta}}\sum_{\varepsilon', |j-j'|\leq1, k'}\frac{|a^{\varepsilon'}_{j',k'}|}{(1+|2^{j-j'}k'-k|)^{N}},$$
we can get
$$\begin{array}{rl}
&I^{\gamma_{1},\gamma_{2}}_{p,q,m}\\
&=|Q_{r}|^{\gamma_{2}/n-1/p}t^{m}\Big\|\Big(\sum\limits_{j\geq\max\left\{-\log_{2}r, -\frac{\log_{2}t}{2\beta}\right\}}\sum\limits_{Q_{j,k}\subset Q_{r}}2^{qj(\gamma_{1}+\frac{n}{2}+2m\beta)}|a^{\varepsilon}_{j,k}(t)|^{q}\chi(2^{j}x-k)\Big)^{1/q}\Big\|_{p}\\
&\leq|Q_{r}|^{\gamma_{2}/n-1/p}t^{m}\Big\|\Big(\sum\limits_{j\geq\max\left\{-\log_{2}r, -\frac{\log_{2}t}{2\beta}\right\}}\sum\limits_{Q_{j,k}\subset Q_{r}}2^{qj(\gamma_{1}+\frac{n}{2}+2m\beta)}e^{-cqt2^{2j\beta}}\\
&\quad\Big(\sum\limits_{\varepsilon', |j-j'|\leq1, k'}\frac{|a^{\varepsilon'}_{j',k'}|}{(1+|2^{j-j'}k'-k|)^{N}}\Big)^{q}\chi(2^{j}x-k)\Big)^{1/q}\Big\|_{p}\\
&\leq|Q_{r}|^{\gamma_{2}/n-1/p}t^{m}\Big\|\Big(\sum\limits_{j\geq\max\left\{-\log_{2}r, -\frac{\log_{2}t}{2\beta}\right\}}\sum\limits_{Q_{j,k}\subset Q_{r}}2^{qj(\gamma_{1}+\frac{n}{2}+2m\beta)}e^{-cqt2^{2j\beta}}\sum\limits_{|j-j'|\leq1}\\
&\quad\Big(\sum\limits_{\varepsilon', k'}\frac{|a^{\varepsilon'}_{j',k'}|}{(1+|2^{j-j'}k'-k|)^{N}}\Big)^{q}\chi(2^{j}x-k)\Big)^{1/q}\Big\|_{p}\end{array}$$

Let
$$f_{j'}=\sum_{\varepsilon',k'}2^{j'(\gamma_{1}+2m\beta+\frac{n}{2})}|a^{\varepsilon'}_{j',k'}|\chi(2^{j'}x-k').$$
We can get
\begin{eqnarray*}
&&\sum_{\varepsilon',k'}|a^{\varepsilon'}_{j',k'}|(1+|2^{j-j'}k'-k|)^{-N}\\
&=&2^{-j'(\gamma_{1}+\frac{n}{2}+2m\beta)}\sum_{\varepsilon',k'}
\frac{2^{j'(\gamma_{1}+\frac{n}{2}+2m\beta)}|a^{\varepsilon'}_{j',k'}|}{(1+|2^{j-j'}k'-k|)^{N}}\\
&\lesssim&2^{-j'(\gamma_{1}+\frac{n}{2}+2m\beta)}M_{A}(f_{j'})(x),\ x\in Q_{j,k}.
\end{eqnarray*}
Notice that for fixed $j, k$, the number of $Q_{j,k}$ such that $x\in Q_{j,k}$ is 1. Then
$$\begin{array}{rl}
&I^{\gamma_{1},\gamma_{2}}_{p,q,m}\\
&\leq|Q_{r}|^{\gamma_{2}/n-1/p}t^{m}\Big\|\Big(\sum\limits_{j\geq\max\left\{-\log_{2}r, -\frac{\log_{2}t}{2\beta}\right\}}\sum\limits_{Q_{j,k}\subset Q_{r}}2^{qj(\gamma_{1}+\frac{n}{2}+2m\beta)}e^{-cqt2^{2j\beta}}\\
&\quad\sum\limits_{|j-j'|\leq1}2^{-qj'(\gamma_{1}+\frac{n}{2}+2m\beta)}|M_{A}(f_{j'})(x)|^{q}\chi(2^{j}x-k)\Big)^{1/q}\Big\|_{p}\\
&\leq|Q_{r}|^{\gamma_{2}/n-1/p}t^{m}\Big\|\Big(\sum\limits_{j\geq\max\left\{-\log_{2}r, -\frac{\log_{2}t}{2\beta}\right\}}\sum\limits_{|j-j'|\leq1}2^{q(j-j')(\gamma_{1}+\frac{n}{2}+2m\beta)}
|M_{A}(f_{j'})(x)|^{q}\Big)^{1/q}\Big\|_{p}\\
&\leq|Q_{r}|^{\gamma_{2}/n-1/p}t^{m}\Big\|\Big(\sum\limits_{j'\geq\max\left\{-\log_{2}r, -\frac{\log_{2}t}{2\beta}\right\}}|f_{j'}(x)|^{q}\Big)^{1/q}\Big\|_{p}
\end{array}$$

It is easy to see that
$$\begin{array}{rl}
&\Big(\sum\limits_{j'\geq\max\left\{-\log_{2}r, -\frac{\log_{2}t}{2\beta}\right\}}|f_{j'}(x)|^{q}\Big)^{1/q}\\
&\lesssim \Big(\sum\limits_{j'\geq\max\left\{-\log_{2}r, -\frac{\log_{2}t}{2\beta}\right\}}
|\sum\limits_{\varepsilon',k'}2^{j'(\gamma_{1}+2m\beta+\frac{n}{2})}|a^{\varepsilon'}_{j',k'}|\chi(2^{j'}x-k')|^{q}\Big)^{1/q}\\
&\lesssim \Big(\sum\limits_{j'\geq\max\left\{-\log_{2}r, -\frac{\log_{2}t}{2\beta}\right\}}
2^{qj'(\gamma_{1}+2m\beta+\frac{n}{2})}\sum\limits_{\varepsilon',k'}|a^{\varepsilon'}_{j',k'}|^{q}\chi(2^{j'}x-k')|\Big)^{1/q}.
\end{array}$$
Hence we can get $I^{\gamma_{1},\gamma_{2}}_{p,q,m}\lesssim\|f\|_{\dot{F}^{\gamma_{1}, \gamma_{2}}_{p,q}}$.

{\bf Part II.} $K^{\beta}_{t}\ast f\in \mathbb{F}^{\gamma_{1}, \gamma_{2}, II}_{p,q}$.

Similarly we have
$$|a^{\varepsilon}_{j,k}(t)|\lesssim e^{-ct2^{2j\beta}}\sum_{\varepsilon',|j-j'|\leq1, k'}\frac{|a^{\varepsilon'}_{j',k'}|}{(1+|2^{j-j'}k'-k|)^{N}}.$$
Hence
$$\begin{array}{rl}
II^{\gamma_{1}, \gamma_{2}}_{p, q}
&=\ |Q_{r}|^{\frac{\gamma_{2}}{n}-\frac{1}{p}}\Big\|\Big(\sum\limits_{-\log_{2}r<j<-\frac{\log_{2}t}{2\beta}}\sum\limits_{Q_{j,k}\subset Q_{r}}
2^{qj(\gamma_{1}+\frac{n}{2})}|a^{\varepsilon}_{j,k}(t)|^{q}\chi(2^{j}x-k)\Big)^{1/q}\Big\|_{p}\\
&\lesssim\ |Q_{r}|^{\frac{\gamma_{2}}{n}-\frac{1}{p}}\Big\|\Big(\sum\limits_{-\log_{2}r<j<-\frac{\log_{2}t}{2\beta}}\sum\limits_{Q_{j,k}\subset Q_{r}}
2^{qj(\gamma_{1}+\frac{n}{2})}e^{-cqt2^{2j\beta}}\\
&\quad \times\Big(\sum\limits_{\varepsilon',|j-j'|\leq1, k'}\frac{|a^{\varepsilon'}_{j',k'}|}{(1+|2^{j-j'}k'-k|)^{N}}\Big)^{q}\chi(2^{j}x-k)\Big)^{1/q}\Big\|_{p}\\
&\lesssim\ |Q_{r}|^{\frac{\gamma_{2}}{n}-\frac{1}{p}}\sum\limits_{w\in\mathbb{Z}^{n}}(1+|w|)^{-N}\Big\|\Big(\sum\limits_{-\log_{2}r<j<-\frac{\log_{2}t}{2\beta}}\sum\limits_{Q_{j,k}\subset Q_{r}}
2^{qj(\gamma_{1}+\frac{n}{2})}e^{-cqt2^{2j\beta}}\\
&\quad \times\sum\limits_{|j-j'|\leq1}\Big(\sum\limits_{Q_{j',k'}\subset Q^{w}_{j,k}}\frac{|a^{\varepsilon'}_{j',k'}|}{(1+|2^{j-j'}k'-k|)^{N}}\Big)^{q}\chi(2^{j}x-k)\Big)^{1/q}\Big\|_{p}\\
&\lesssim\ |Q_{r}|^{\frac{\gamma_{2}}{n}-\frac{1}{p}}\sum\limits_{w\in\mathbb{Z}^{n}}(1+|w|)^{-N}
\Big\|\Big(\sum\limits_{-\log_{2}r<j<-\frac{\log_{2}t}{2\beta}}\sum\limits_{Q_{j,k}\subset Q_{r}}
2^{qj(\gamma_{1}+\frac{n}{2})}e^{-cqt2^{2j\beta}}\\
&\quad \times\sum\limits_{|j-j'|\leq1}\Big(2^{-j'(\gamma_{1}+\frac{n}{2})}\sum\limits_{Q_{j',k'}\subset Q^{w}_{j,k}}\frac{2^{j'(\gamma_{1}+\frac{n}{2})}|a^{\varepsilon'}_{j',k'}|}{(1+|2^{j-j'}k'-k|)^{N}}\Big)^{q}\chi(2^{j}x-k)\Big)^{1/q}\Big\|_{p}.
\end{array}$$
Let
$$f^{w}_{j'}=\sum_{Q_{j',k'}\subset Q^{w}_{j,k}}2^{j'(\gamma_{1}+\frac{n}{2})}|a^{\varepsilon'}_{j',k'}|\chi(2^{j'}x-k')$$
We can get
$$\begin{array}{rl}
&II^{\gamma_{1}, \gamma_{2}}_{p, q}\\
&\lesssim\quad |Q_{r}|^{\frac{\gamma_{2}}{n}-\frac{1}{p}}\sum\limits_{w\in\mathbb{Z}^{n}}(1+|w|)^{-N}
\Big\|\Big(\sum\limits_{-\log_{2}r<j<-\frac{\log_{2}t}{2\beta}}\sum\limits_{Q_{j,k}\subset Q_{r}}
2^{qj(\gamma_{1}+\frac{n}{2})}\\
&\quad \times\sum\limits_{|j-j'|\leq1}2^{-qj'(\gamma_{1}+\frac{n}{2})}\Big|M_{A}(f^{w}_{j'})(x)\Big|^{q}\chi(2^{j}x-k)\Big)^{1/q}\Big\|_{p}\\
&\lesssim\quad |Q_{r}|^{\frac{\gamma_{2}}{n}-\frac{1}{p}}\sum\limits_{w\in\mathbb{Z}^{n}}(1+|w|)^{-N}
\Big\|\Big(\sum\limits_{-\log_{2}r<j<-\frac{\log_{2}t}{2\beta}}\sum\limits_{Q_{j,k}\subset Q_{r}}
\sum\limits_{|j-j'|\leq1}2^{q(j-j')(\gamma_{1}+\frac{n}{2})}\\
&\quad \times
\Big|\sum\limits_{Q_{j',k'}\subset Q^{w}_{j,k}}2^{j'(\gamma_{1}+\frac{n}{2})}|a^{\varepsilon'}_{j',k'}|\chi(2^{j'}x-k')\Big|^{q}\chi(2^{j}x-k)\Big)^{1/q}\Big\|_{p}.
\end{array}$$
By H\"older's inequality, we obtain
$$\begin{array}{rl}
&\Big(\sum\limits_{Q_{j',k'}\subset Q^{w}_{j,k}}|a^{\varepsilon'}_{j',k'}|\chi(2^{j'}x-k')\Big)^{q}\\
&\lesssim \Big(\sum\limits_{Q_{j',k'}\subset Q^{w}_{j,k}}|a^{\varepsilon'}_{j',k'}|^{q}\chi(2^{j'}x-k')\Big)\Big(\sum\limits_{Q_{j',k'}\subset Q^{w}_{j,k}}1\Big)^{q-1}\\
&\lesssim 2^{n(j'-j)(q-1)}\Big(\sum\limits_{Q_{j',k'}\subset Q^{w}_{j,k}}|a^{\varepsilon'}_{j',k'}|^{q}\chi(2^{j'}x-k')\Big)\\
&\lesssim \Big(\sum\limits_{Q_{j',k'}\subset Q^{w}_{j,k}}|a^{\varepsilon'}_{j',k'}|^{q}\chi(2^{j'}x-k')\Big),
\end{array}$$
where we have used the fact that $|j-j'|\leq1$. By the above estimate, we get
$$\begin{array}{rl}
&II^{\gamma_{1}, \gamma_{2}}_{p, q}\\
&\lesssim\quad |Q_{r}|^{\frac{\gamma_{2}}{n}-\frac{1}{p}}\sum\limits_{w\in\mathbb{Z}^{n}}(1+|w|)^{-N}
\Big\|\Big(\sum\limits_{-\log_{2}r<j<-\frac{\log_{2}t}{2\beta}}\sum\limits_{Q_{j,k}\subset Q_{r}}
\sum\limits_{|j-j'|\leq1}2^{q(j-j')(\gamma_{1}+\frac{n}{2})}\\
&\quad \times
2^{qj'(\gamma_{1}+\frac{n}{2})}\Big(\sum\limits_{Q_{j',k'}\subset Q^{w}_{j,k}}|a^{\varepsilon'}_{j',k'}|^{q}\chi(2^{j'}x-k')\Big)\chi(2^{j}x-k)\Big)^{1/q}\Big\|_{p}\\
&\lesssim \quad |Q_{r}|^{\frac{\gamma_{2}}{n}-\frac{1}{p}}\sum\limits_{w\in\mathbb{Z}^{n}}(1+|w|)^{-N}
\Big\|\Big(\sum\limits_{-\log_{2}r_{w}<j'<-\frac{\log_{2}t}{2\beta}}\sum\limits_{Q_{j',k'}\subset Q_{r_{w}}}\\
&\quad 2^{qj'(\gamma_{1}+\frac{n}{2})}|a^{\varepsilon'}_{j',k'}|^{q}\chi(2^{j'}x-k')\Big)^{1/q}\Big\|_{p}\\
&\lesssim \|f\|_{\dot{F}^{\gamma_{1}, \gamma_{2}}_{p,q}}.
\end{array}$$

{\bf Part III.} $K^{\beta}_{t}\ast f\in \mathbb{F}^{\gamma_{1}, \gamma_{2}, III}_{p,q,m}$.
$$\begin{array}{rl}
III^{\gamma_{1}, \gamma_{2}}_{p,q,m}&=|Q_{r}|^{\frac{\gamma_{2}}{n}-\frac{1}{p}}\Big\|\Big(\sum\limits_{(\varepsilon,j,k)\in\Lambda_{Q}^{n}}
2^{qj(\gamma_{1}+\frac{n}{2}+2m\beta)}\int^{r^{2\beta}}_{2^{-2j\beta}}|a^{\varepsilon}_{j,k}(t)|^{q}t^{qm}\frac{dt}{t}\chi(2^{j}x-k)\Big)^{\frac{1}{q}}\Big\|_{p}\\
&\lesssim |Q_{r}|^{\frac{\gamma_{2}}{n}-\frac{1}{p}}\Big\|\Big(\sum\limits_{(\varepsilon,j,k)\in\Lambda_{Q}^{n}}
2^{qj(\gamma_{1}+\frac{n}{2}+2m\beta)}\\
&\quad \int^{r^{2\beta}}_{2^{-2j\beta}}e^{-cqt2^{2j\beta}}\Big(\sum\limits_{\varepsilon',|j-j'|\leq1, k'}\frac{|a^{\varepsilon'}_{j',k'}|}{(1+|2^{j-j'}k'-k|)^{N}}\Big)^{q}t^{qm}\frac{dt}{t}\chi(2^{j}x-k)\Big)^{\frac{1}{q}}\Big\|_{p}\\
&\lesssim |Q_{r}|^{\frac{\gamma_{2}}{n}-\frac{1}{p}}\sum\limits_{w\in\mathbb{Z}^{n}}(1+|w|)^{-N}\Big\|\Big(\sum\limits_{(\varepsilon,j,k)\in\Lambda_{Q}^{n}}
2^{qj(\gamma_{1}+\frac{n}{2}+2m\beta)}\int^{r^{2\beta}}_{2^{-2j\beta}}e^{-cqt2^{2j\beta}}\\
&\quad \sum\limits_{|j-j'|\leq1}\Big(\sum\limits_{Q_{j' k'}\subset Q_{j,k}^{w}}\frac{|a^{\varepsilon'}_{j',k'}|}{(1+|2^{j-j'}k'-k|)^{N}}\Big)^{q}t^{qm}\frac{dt}{t}\chi(2^{j}x-k)\Big)^{\frac{1}{q}}\Big\|_{p}.
\end{array}$$
It is easy to see that
$$\begin{array}{rl}
\int^{r^{2\beta}}_{2^{-2j\beta}}e^{-cqt2^{2j\beta}}(t2^{2j\beta})^{qm}\frac{dt}{t}\lesssim 1.
\end{array}$$
Then let
$$\begin{array}{rl}
&f^{w}_{j'}(x)=\sum\limits_{Q_{j' k'}\subset Q_{j,k}^{w}}2^{j'(\gamma_{1}+\frac{n}{2})}|a^{\varepsilon'}_{j',k'}|\chi(2^{j'}x-k').
\end{array}$$
We can get
$$\begin{array}{rl}
&III^{\gamma_{1}, \gamma_{2}}_{p,q,m}\\
&\lesssim |Q_{r}|^{\frac{\gamma_{2}}{n}-\frac{1}{p}}\sum\limits_{w\in\mathbb{Z}^{n}}(1+|w|)^{-N}\Big\|\Big(\sum\limits_{(\varepsilon,j,k)\in\Lambda_{Q}^{n}}
2^{qj(\gamma_{1}+\frac{n}{2})}\\
&\quad \sum\limits_{|j-j'|\leq1}\Big(\sum\limits_{Q_{j' k'}\subset Q_{j,k}^{w}}\frac{|a^{\varepsilon'}_{j',k'}|}{(1+|2^{j-j'}k'-k|)^{N}}\Big)^{q}\chi(2^{j}x-k)\Big)^{\frac{1}{q}}\Big\|_{p}\\
&\lesssim |Q_{r}|^{\frac{\gamma_{2}}{n}-\frac{1}{p}}\sum\limits_{w\in\mathbb{Z}^{n}}(1+|w|)^{-N}\Big\|\Big(\sum\limits_{(\varepsilon,j,k)\in\Lambda_{Q}^{n}}
\sum\limits_{|j-j'|\leq1}2^{q(j-j')(\gamma_{1}+\frac{n}{2})}\\
&\quad \times\Big|M_{A}(f_{j'})(x)\Big|^{q}\chi(2^{j}x-k)\Big)^{\frac{1}{q}}\Big\|_{p}\\
&\lesssim |Q_{r}|^{\frac{\gamma_{2}}{n}-\frac{1}{p}}\sum\limits_{w\in\mathbb{Z}^{n}}\frac{1}{(1+|w|)^{N}}
\Big\|\Big(\sum\limits_{j\geq-\log_{2}r}\sum\limits_{Q_{j,k}\subset Q_{r}}
\sum\limits_{|j-j'|\leq1}2^{q(j-j')(\gamma_{1}+\frac{n}{2})}\\
&\quad \times\Big|\sum\limits_{Q_{j' k'}\subset Q_{j,k}^{w}}2^{j'(\gamma_{1}+\frac{n}{2})}|a^{\varepsilon'}_{j',k'}|\chi(2^{j'}x-k')\Big|^{q}\chi(2^{j}x-k)\Big)^{\frac{1}{q}}\Big\|_{p}\\
&\lesssim |Q_{r}|^{\frac{\gamma_{2}}{n}-\frac{1}{p}}\sum\limits_{w\in\mathbb{Z}^{n}}\frac{1}{(1+|w|)^{N}}
\Big\|\Big(\sum\limits_{j'\geq-\log_{2}r_{w}}
\sum\limits_{Q_{j',k'}\subset Q^{w}_{r}}2^{qj'(\gamma_{1}+\frac{n}{2})}|a^{\varepsilon'}_{j',k'}|^{q}\chi(2^{j'}x-k')\Big)^{\frac{1}{q}}\Big\|_{p}\\
&\lesssim \|f\|_{\dot{F}^{\gamma_{1}, \gamma_{2}}_{p,q}}.
\end{array}$$

{\bf Part IV.} $K^{\beta}_{t}\ast f\in\mathbb{F}^{\gamma_{1}, \gamma_{2}, IV}_{p,q, m'}$.

Similar to the proof of Part III, we can obtain that
$$\begin{array}{rl}
&IV^{\gamma_{1}, \gamma_{2}}_{p,q,m'}\\
&=|Q_{r}|^{\frac{\gamma_{2}}{n}-\frac{1}{p}}\Big\|\Big(\sum\limits_{(\varepsilon,j,k)\in\Lambda^{n}_{Q}}
2^{qj(\gamma_{1}+\frac{n}{2}+2m'\beta)}\int^{2^{-2j\beta}}_{0}t^{qm'}e^{-cqt2^{2j\beta}}\\
&\quad\Big(\sum\limits_{|j-j'|\leq1}\sum\limits_{\varepsilon',k'}\frac{|a^{\varepsilon'}_{j',k'}|}{(1+|2^{j-j'}k'-k|^{N})}\Big)^{q}
\frac{dt}{t}\chi(2^{j}x-k)\Big)^{1/q}\Big\|_{p}\\
&\lesssim |Q_{r}|^{\frac{\gamma_{2}}{n}-\frac{1}{p}}\Big\|\Big(\sum\limits_{(\varepsilon,j,k)\in\Lambda^{n}_{Q}}
2^{qj(\gamma_{1}+\frac{n}{2})}\\
&\quad\Big(\sum\limits_{|j-j'|\leq1}\sum\limits_{\varepsilon',k'}\frac{|a^{\varepsilon'}_{j',k'}|}{(1+|2^{j-j'}k'-k|^{N})}\Big)^{q}
\chi(2^{j}x-k)\Big)^{1/q}\Big\|_{p}\\
&\lesssim |Q_{r}|^{\frac{\gamma_{2}}{n}-\frac{1}{p}}\sum\limits_{w\in\mathbb{Z}^{n}}(1+|w|)^{-N}\Big\|\Big(\sum\limits_{(\varepsilon,j,k)\in\Lambda_{Q}^{n}}
\sum\limits_{|j-j'|\leq1}2^{q(j-j')(\gamma_{1}+\frac{n}{2})}\\
&\quad \times\Big|M_{A}(f_{j'})(x)\Big|^{q}\chi(2^{j}x-k)\Big)^{\frac{1}{q}}\Big\|_{p}\\
&\lesssim |Q_{r}|^{\frac{\gamma_{2}}{n}-\frac{1}{p}}\sum\limits_{w\in\mathbb{Z}^{n}}\frac{1}{(1+|w|)^{N}}
\Big\|\Big(\sum\limits_{j'\geq-\log_{2}r_{w}}
\sum\limits_{Q_{j',k'}\subset Q^{w}_{r}}2^{qj'(\gamma_{1}+\frac{n}{2})}|a^{\varepsilon'}_{j',k'}|^{q}\chi(2^{j'}x-k')\Big)^{\frac{1}{q}}\Big\|_{p}\\
&\lesssim \|f\|_{\dot{F}^{\gamma_{1}, \gamma_{2}}_{p,q}}.
\end{array}$$

(2) Now we prove
$$\pi_{\Phi}f(t,x):\ \mathbb{F}^{\gamma_{1}, \gamma_{2}}_{p,q,m,m'}\rightarrow \dot{F}^{\gamma_{1}, \gamma_{2}}_{p,q}.$$
For the dyadic cube $Q_{r}$ and $j\geq-\log_{2}r$, H\'older's inequality implies that
$$\begin{array}{rl}
|a^{\varepsilon}_{j,k}|^{q}
&\lesssim\Big|\sum\limits_{\varepsilon', |j-j'|\leq1, k'}\int^{\infty}_{0}\Big\{(\max\{t2^{2j'\beta}, t^{-1}2^{-2j'\beta}\})^{-N}\\
&\quad \times|a^{\varepsilon'}_{j',k'}(t)|(1+|2^{j-j'}k'-k|^{-N})\Big\}\frac{dt}{t}\Big|^{q}\\
&\lesssim \sum\limits_{|j-j'|\leq1}\sum\limits_{\varepsilon',k'}(1+|2^{j'-j}k'-k|)^{-N}
\Big(\int^{\infty}_{r^{2\beta}}(t2^{2j'\beta})^{-N}|a^{\varepsilon'}_{j',k'}(t)|\frac{dt}{t}\Big)^{q}\\
&+ \sum\limits_{|j-j'|\leq1}\sum\limits_{\varepsilon',k'}(1+|2^{j'-j}k'-k|)^{-N}
\Big(\int_{2^{-2j'\beta}}^{r^{2\beta}}(t2^{2j'\beta})^{-N}|a^{\varepsilon'}_{j',k'}(t)|\frac{dt}{t}\Big)^{q}\\
&+ \sum\limits_{|j-j'|\leq1}\sum\limits_{\varepsilon',k'}(1+|2^{j'-j}k'-k|)^{-N}
\Big(\int^{2^{-2j'\beta}}_{0}(t2^{2j'\beta})^{N}|a^{\varepsilon'}_{j',k'}(t)|\frac{dt}{t}\Big)^{q}.
\end{array}$$
By
$$\begin{array}{rl}
(\pi_{\Phi}f(t, \cdot))&=\int^{\infty}_{0}f(t, \cdot)\ast\Phi^{\beta}_{t}(x)\frac{dt}{t}\\
&=:\sum\limits_{(\varepsilon,j,k)\in\Lambda_{n}}a^{\varepsilon}_{j,k}\Phi^{\varepsilon}_{j,k}(x).
\end{array}$$
We estimate the term
$$\begin{array}{rl}
M&=|Q_{r}|^{\frac{\gamma_{2}}{n}-\frac{1}{p}}\Big\|\Big(\sum\limits_{(\varepsilon,j,k)\in \Lambda^{n}_{Q}}2^{qj(\gamma_{1}+\frac{n}{2})}|a^{\varepsilon}_{j,k}|^{q}\chi(2^{j}x-k)\Big)^{1/q}\Big\|_{L^{p}}\\
&\lesssim M_{1}+M_{2}+M_{3}.\end{array}$$

For $M_{1}$, we get
$$\begin{array}{rl}
M_{1}&\lesssim |Q_{r}|^{\frac{\gamma_{2}}{n}-\frac{1}{p}}\Big\|\Big(\sum\limits_{(\varepsilon,j,k)\in \Lambda^{n}_{Q}}2^{qj(\gamma_{1}+\frac{n}{2})}\sum\limits_{|j-j'|\leq1}\sum\limits_{\varepsilon',k'}(1+|2^{j-j'}k'-k|)^{-N}\\
&\quad \Big(\int^{\infty}_{r^{2\beta}}(t2^{2j'\beta})^{-N}|a^{\varepsilon'}_{j',k'}(t)|\frac{dt}{t}\Big)^{q}\chi(2^{j}x-k)\Big)^{1/q}\Big\|_{p}.
\end{array}$$
Because $f(t, x)\in\mathbb{F}^{\gamma_{1},\gamma_{2}}_{p,q,m,m'}\subset \mathbb{F}^{\gamma_{1}-\gamma_{2}}_{p,\infty}$, if $t2^{2j'\beta}\geq1$,
$$|a^{\varepsilon'}_{j',k'}(t)|\lesssim (t2^{2j'\beta})^{-m}2^{-nj'/2}2^{-j'(\gamma_{1}-\gamma_{2})}\lesssim 2^{-nj'/2}2^{-j'(\gamma_{1}-\gamma_{2})}.$$
We can get
$$\begin{array}{rl}
M_{1}&\lesssim |Q_{r}|^{\frac{\gamma_{2}}{n}-\frac{1}{p}}\Big\|\Big(\sum\limits_{(\varepsilon,j,k)\in\Lambda^{n}_{Q}}2^{qj(\gamma_{1}+\frac{n}{2})}\sum\limits_{|j-j'|\leq1}
\sum\limits_{\varepsilon',k'}(1+|2^{j-j'}k'-k|)^{-N}\\
&\quad \Big(\int^{\infty}_{r^{2\beta}}(t2^{2j'\beta})^{-N}2^{-nj'/2}2^{-j(\gamma_{1}-\gamma_{2})}\frac{dt}{t}\Big)^{q}\chi(2^{j}x-k)\Big)^{1/q}\Big\|_{p}\\
&\lesssim |Q_{r}|^{\frac{\gamma_{2}}{n}-\frac{1}{p}}\Big\|\Big(\sum\limits_{(\varepsilon,j,k)\in\Lambda_{Q}^{n}}2^{qj(\gamma_{1}+\frac{n}{2})}\sum\limits_{|j-j'|\leq1}
\sum\limits_{\varepsilon',k'}(1+|2^{j-j'}k'-k|)^{-N}\\
&\quad [2^{-nj'/2}2^{-j'(\gamma_{1}-\gamma_{2})}(r2^{j'})^{-2\beta N}]^{q}\chi(2^{j}x-k)\Big)^{1/q}\Big\|_{p}\\
&\lesssim |Q_{r}|^{\frac{\gamma_{2}}{n}-\frac{1}{p}}\Big\|\Big(\sum\limits_{(\varepsilon,j,k)\in\Lambda_{Q}^{n}}2^{qj(\gamma_{1}+\frac{n}{2})}\sum\limits_{|j-j'|\leq1}
\sum\limits_{\varepsilon',k'}(1+|2^{j-j'}k'-k|)^{-N}\\
&\quad 2^{-qnj'/2}2^{-qj'(\gamma_{1}-\gamma_{2})}(r2^{j'})^{-2q\beta N}\chi(2^{j}x-k)\Big)^{1/q}\Big\|_{p}\\
&\lesssim \Big\|\Big(\sum\limits_{(\varepsilon,j,k)\in\Lambda_{Q}^{n}}2^{qj(\gamma_{1}+\frac{n}{2})}\sum\limits_{|j-j'|\leq1}
\sum\limits_{w\in\mathbb{Z}^{n}}\sum\limits_{Q_{j',k'\subset Q^{w}_{j,k}}}(1+|2^{j-j'}k'-k|)^{-N}\\
&\quad 2^{-qnj'/2}2^{-qj'(\gamma_{1}-\gamma_{2})}(r2^{j'})^{-2q\beta N}\chi(2^{j}x-k)\Big)^{1/q}\Big\|_{p}\\
&\lesssim |Q_{r}|^{\frac{\gamma_{2}}{n}-\frac{1}{p}}\sum\limits_{j\geq-\log_{2}r}2^{j(\gamma_{1}+\frac{n}{2})}\sum\limits_{(\varepsilon,k)\in S^{j}_{Q_{r}}}
\sum\limits_{|j-j'|\leq1}\sum\limits_{w\in\mathbb{Z}^{n}}(1+|w|)^{-N}\\
&\quad \sum\limits_{Q_{j',k'}\subset Q^{w}_{j,k}}2^{-nj'/2}2^{-j'(\gamma_{1}-\gamma_{2})}(r2^{j'})^{-2\beta N}\|\chi(2^{j}x-k)\|_{p}\\
&\lesssim |Q_{r}|^{\frac{\gamma_{2}}{n}-\frac{1}{p}}\sum\limits_{j'\geq-\log_{2}r-1}2^{j'(\gamma_{1}+\frac{n}{2})}2^{-nj'/2}2^{-j'(\gamma_{1}-\gamma_{2})}
(r2^{j'})^{-2\beta N}(r2^{j'})^{n}2^{-nj'/p}\\
&\lesssim 1.
\end{array}$$
For $M_{2}$, we can get
$$\begin{array}{rl}
M_{2}&\lesssim|Q_{r}|^{\frac{\gamma_{2}}{n}-\frac{1}{p}}\Big\|\Big(\sum\limits_{(\varepsilon,j,k)\in\Lambda_{Q}^{n}}2^{qj(\gamma_{1}+\frac{n}{2})}
\Big|\sum\limits_{|j-j'|\leq1}\sum\limits_{\varepsilon',k'}(1+|2^{j-j'}k'-k|)^{-N}\\
&\quad \Big(\int^{r^{2\beta}}_{2^{-2j'\beta-1}}(t2^{2j'\beta})^{-N}|a^{\varepsilon'}_{j',k'}(t)|\frac{dt}{t}\Big)\Big|^{q}\chi(2^{j}x-k)\Big)^{1/q}\Big\|_{p}.
\end{array}$$

Let
$$\begin{array}{rl}
b^{\varepsilon'}_{j',k'}&=\int^{r^{2\beta}}_{2^{-1-2j'\beta}}(t2^{2j'\beta})^{-N}|a^{\varepsilon'}_{j',k'}(t)|\frac{dt}{t}.
\end{array}$$
We can get
$$\begin{array}{rl}
&\sum\limits_{|j-j'|\leq1}\sum\limits_{\varepsilon',k'}(1+|2^{j-j'}k'-k|)^{-N}\Big(\int^{r^{2\beta}}_{2^{-2j'\beta}}
(t2^{2j'\beta})^{-N}|a^{\varepsilon'}_{j',k'}(t)|\frac{dt}{t}\Big)\\
=:&\sum\limits_{|j-j'|\leq1}\sum\limits_{\varepsilon',k'}(1+|2^{j-j'}k'-k|)^{-N}b^{\varepsilon'}_{j',k'}\\
\lesssim&2^{-j'(\gamma_{1}+2m\beta+\frac{n}{2})}\sum\limits_{\varepsilon',k'}
\frac{2^{j'(\gamma_{1}+2m\beta+\frac{n}{2})}|b^{\varepsilon'}_{j',k'}|}{(1+|2^{j-j'}k'-k|)^{N}}\\
\lesssim&2^{-j'(\gamma_{1}+2m\beta+\frac{n}{2})}M_{A}(f_{j'})(x),\ x\in Q_{j,k},
\end{array}$$
where $f_{j'}(x)=\sum\limits_{\varepsilon',k'}2^{j'(\gamma_{1}+2m\beta+\frac{n}{2})}|b^{\varepsilon'}_{j',k'}|\chi(2^{j'}x-k')$.
Then
$$\begin{array}{rl}
M_{2}
&\lesssim |Q_{r}|^{\frac{\gamma_{2}}{n}-\frac{1}{p}}\Big\|\Big(\sum\limits_{(\varepsilon,j,k)\in\Lambda^{n}_{Q}}\sum\limits_{|j-j'|\leq1}
2^{qj(\gamma_{1}+\frac{n}{2})}2^{-qj'(\gamma_{1}+\frac{n}{2})}\\
&\quad\times \Big|M_{A}(f_{j'})(x)\Big|^{q}\chi(2^{j}x-k)\Big)^{1/q}\Big\|_{p}\\
&\lesssim |Q_{r}|^{\frac{\gamma_{2}}{n}-\frac{1}{p}}\sum\limits_{w\in\mathbb{Z}^{n}}(1+|w|)^{-N}\Big\|\Big(\sum\limits_{j'\geq-\log_{2}r_{w}}
\sum\limits_{Q_{j',k'}\subset Q^{w}_{r}}2^{qj'(\gamma_{1}+\frac{n}{2})}\\
&\quad\times|b^{\varepsilon'}_{j',k'}|^{q}\chi(2^{j'}x-k')\Big)^{1/q}\Big\|_{p}.
\end{array}$$
By H\"older's inequality, we can see that
$$\begin{array}{rl}
|b^{\varepsilon'}_{j',k'}|^{q}&=\Big(\int^{r^{2\beta}}_{2^{}-2j'\beta}(t2^{2j'\beta})^{-N}|a^{\varepsilon'}_{j',k'}(t)|\frac{dt}{t}\Big)^{q}\\
&\lesssim\Big(\int^{r^{2\beta}}_{2^{}-2j'\beta}|a^{\varepsilon'}_{j',k'}(t)|^{q}(t2^{2j'\beta})^{qm}\frac{dt}{t}\Big)
\Big(\int^{r^{2\beta}}_{2^{}-2j'\beta}(t2^{2j'\beta})^{-qm-N}\frac{dt}{t}\Big)^{q-1}\\
&\lesssim \int^{r^{2\beta}}_{2^{}-2j'\beta}|a^{\varepsilon'}_{j',k'}(t)|^{q}(t2^{2j'\beta})^{qm}\frac{dt}{t}.
\end{array}$$

Finally we obtain
$$\begin{array}{rl}
M_{2}&\lesssim |Q_{r}|^{\frac{\gamma_{2}}{n}-\frac{1}{p}}\sum\limits_{w\in\mathbb{Z}^{n}}\frac{1}{(1+|w|)^{N}}\Big\|\Big(\sum\limits_{j'\geq-\log_{2}r_{w}}
2^{qj'(\gamma_{1}+\frac{n}{2}+2m\beta)}\\
&\quad \sum\limits_{Q_{j',k'}\subset Q^{w}_{r}}\int^{r^{2\beta}}_{2^{}-2j'\beta}|a^{\varepsilon'}_{j',k'}(t)|^{q}(t2^{2j'\beta})^{qm}\frac{dt}{t}
\chi(2^{j'}x-k')\Big)^{1/q}\Big\|_{p}\\
&\lesssim \|f\|_{\mathbb{F}^{\gamma_{1}, \gamma_{2}, III}_{p,q, m}}.
\end{array}$$

Now we deal with $M_{3}$.
$$\begin{array}{rl}
M_{3}&\lesssim |Q_{r}|^{\frac{\gamma_{2}}{n}-\frac{1}{p}}\Big\|\Big(\sum\limits_{(\varepsilon,j,k)\in\Lambda_{Q}^{n}}2^{qj(\gamma_{1}+\frac{n}{2})}
\Big|\sum\limits_{|j-j'|\leq1}\sum\limits_{\varepsilon',k'}(1+|2^{j-j'}k'-k|)^{-N}\\
&\quad\Big(\int^{2^{-2j'\beta}}_{0}(t2^{2j'\beta})^{N}|a^{\varepsilon'}_{j',k'}(t)|\frac{dt}{t}\Big)\Big|^{q}\chi(2^{j}x-k)\Big)^{1/q}\Big\|_{p}.
\end{array}$$

Let
$$\begin{array}{rl}
b^{\varepsilon'}_{j',k'}=\int^{2^{-2j'\beta}}_{0}(t2^{2j'\beta})^{N}|a^{\varepsilon'}_{j',k'}(t)|\frac{dt}{t}.
\end{array}$$
We obtain that
$$\begin{array}{rl}
&\sum\limits_{\varepsilon', k'}(1+|2^{j'-j}k'-k|)^{-N}\Big(\int^{2^{-2j'\beta}}_{0}(t2^{2j'\beta})^{N}|a^{\varepsilon'}_{j',k'}(t)|\frac{dt}{t}\Big)\\
\lesssim&2^{-j'(\gamma_{1}+\frac{n}{2})}\sum\limits_{\varepsilon',k'}\frac{2^{j'(\gamma_{1}+\frac{n}{2})}|b^{\varepsilon'}_{j',k'}|}{(1+|2^{j'-j}k'-k|)^{N}}\\
\lesssim&2^{-j'(\gamma_{1}+\frac{n}{2})}M_{A}(f_{j'})(x),\ x\in Q_{j,k},
\end{array}$$
where
$$\begin{array}{rl}
f_{j'}=\sum\limits_{\varepsilon',k'}2^{j'(\gamma_{1}+\frac{n}{2})}|b^{\varepsilon'}_{j',k'}|\chi(2^{j'}x-k').
\end{array}$$
So
$$\begin{array}{rl}
M_{3}&\lesssim |Q_{r}|^{\frac{\gamma_{2}}{n}-\frac{1}{p}}\sum\limits_{w\in\mathbb{Z}^{n}}(1+|w|)^{-N}\\
&\quad\Big\|\Big(\sum\limits_{j\geq-\log_{2}r}2^{qj'(\gamma_{1}+\frac{n}{2})}
\sum\limits_{Q_{\varepsilon,k'}\subset Q^{w}_{r}}|b^{\varepsilon'}_{j',k'}|^{q}\chi(2^{j'}x-k')\Big)^{1/q}\Big\|_{p}.
\end{array}$$
By H\"older's inequality, we get
$$\begin{array}{rl}
|b^{\varepsilon'}_{j',k'}|=&\Big(\int^{2^{-2j'\beta}}_{0}(t2^{2j'\beta})^{N}|a^{\varepsilon'}_{j',k'}(t)|\frac{dt}{t}\Big)^{q}\\
\lesssim&\Big(\int^{2^{-2j'\beta}}_{0}(t2^{2j'\beta})^{qm'}|a^{\varepsilon'}_{j',k'}(t)|^{q}\frac{dt}{t}\Big)
\Big(\int^{2^{-2j'\beta}}_{0}(t2^{2j'\beta})^{N-qm'}\frac{dt}{t}\Big)^{q-1}\\
\lesssim&\int^{2^{-2j'\beta}}_{0}(t2^{2j'\beta})^{qm'}|a^{\varepsilon'}_{j',k'}(t)|^{q}\frac{dt}{t}
\end{array}$$
Hence
$$\begin{array}{rl}
M_{3}\lesssim&|Q_{r}|^{\frac{\gamma_{2}}{n}-\frac{1}{p}}\sum\limits_{w\in\mathbb{Z}^{n}}(1+|w|)^{-N}\\
&\quad\Big\|\Big(\sum\limits_{j'\geq-\log_{2}r_{w}}2^{qj'(\gamma_{1}+\frac{n}{2}+2m'\beta)}\int^{2^{-2j'\beta}}_{0}t^{qm'}|a^{\varepsilon'}_{j',k'}(t)|^{q}
\frac{dt}{t}\Big)^{1/q}\Big\|_{p}\\
\lesssim&\|f\|_{\mathbb{F}^{\gamma_{1},\gamma_{2}, IV}_{p,q,m'}}.
\end{array}$$
\end{proof}

\subsection{Continuity of Riesz operators on $\mathbb{F}^{\gamma_{1}, \gamma_{2}}_{p,q,m,m'}$}\label{sec:Riesz}
For Riesz operators $R_{l}$ $(l=1,\cdots,n)$,
$\forall (\epsilon,j,k), (\epsilon',j',k')\in \Lambda_{n}$, denote
$$a^{\epsilon,\epsilon',l}_{j,k,j',k'}= \langle \Phi^{\epsilon}_{j,k},
R_l \Phi^{\epsilon'}_{j',k'}\rangle.$$ If $|j-j'|\geq 2$, then
$a^{\epsilon,\epsilon',l}_{j,k,j',k'}=0$.
Similar to the proof in the Lemma \ref{lem:CZcontinuity},  we can verify  that the Riesz
transforms $R_{l}$, $l=1,\cdots,n$, are continuous on $\mathbb{F}^{\gamma_{1}, \gamma_{2}}_{p,q,m,m'}$. See also
\cite{Al, MY, Yang1}.
\begin{theorem}
Given $1<p, q<\infty$, $\gamma_{1}$, $\gamma_{2}\in\mathbb{R}$, $m>p$ and $m'>0$. The Riesz transforms $R_{1}, R_{2}\cdots, R_{n}$ are bounded on $\mathbb{F}^{\gamma_{1}, \gamma_{2}}_{p,q,m,m'}$.
\end{theorem}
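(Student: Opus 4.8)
The plan is to reduce the boundedness of each Riesz transform $R_l$ to an almost‑diagonal matrix estimate on wavelet coefficients, entirely parallel to the proof of Theorem~\ref{lem:CZcontinuity}, the only new feature being the extra parameter $t$. Note first that $R_l$ acts fibrewise in the spatial variable: if $a(t,x)=\sum_{(\epsilon,j,k)\in\Lambda_n}a^{\epsilon}_{j,k}(t)\Phi^{\epsilon}_{j,k}(x)$, then for each fixed $t$ one has $R_l a(t,x)=\sum_{(\epsilon,j,k)\in\Lambda_n}\tilde a^{\epsilon}_{j,k}(t)\Phi^{\epsilon}_{j,k}(x)$ with $\tilde a^{\epsilon}_{j,k}(t)=\sum_{(\epsilon',j',k')\in\Lambda_n}a^{\epsilon,\epsilon',l}_{j,k,j',k'}\,a^{\epsilon'}_{j',k'}(t)$. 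The crucial structural inputs are that $a^{\epsilon,\epsilon',l}_{j,k,j',k'}=0$ whenever $|j-j'|\ge 2$, and that (\ref{eq:a}) for $|j-j'|\le 1$ collapses to $|a^{\epsilon,\epsilon',l}_{j,k,j',k'}|\lesssim (1+|2^{j-j'}k'-k|)^{-(n+N_0)}$ with $N_0$ as large as we please. Consequently $R_l$ does not move frequency scales, so each dyadic weight appearing in the four defining norms ($2^{qj(\gamma_1+\frac n2+2m\beta)}$ for $X_1$ and $X_3$, $2^{qj(\gamma_1+\frac n2)}$ for $X_2$, $2^{qj(\gamma_1+\frac n2+2m'\beta)}$ for $X_4$) is comparable before and after the index shift $j\leftrightarrow j'$, and the time integrals $\int_{2^{-2j\beta}}^{r^{2\beta}}$ and $\int_{0}^{2^{-2j\beta}}$ are perturbed only by bounded multiplicative constants.

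I would then treat the four pieces $\mathbb{F}^{\gamma_1,\gamma_2,I}_{p,q,m}$, $\mathbb{F}^{\gamma_1,\gamma_2,II}_{p,q}$, $\mathbb{F}^{\gamma_1,\gamma_2,III}_{p,q,m}$, $\mathbb{F}^{\gamma_1,\gamma_2,IV}_{p,q,m'}$ separately but by one uniform mechanism. Fix a dyadic cube $Q_r$; for $w\in\mathbb{Z}^n$ group the indices $(\epsilon',j',k')$ according to $Q_{j',k'}\subset\widetilde Q_r^{w}$ exactly as in Section~\ref{sec:semi}. Extracting a small power from the off‑diagonal decay $(1+|2^{j-j'}k'-k|)^{-(n+N_0)}$ produces a factor $(1+|w|)^{-N}$, leaving a summable series in $w$; inside each block the spatial sum over $k'$ of $(1+|2^{j-j'}k'-k|)^{-N}|a^{\epsilon'}_{j',k'}(t)|$ is dominated, for $x\in Q_{j,k}$ and $|j-j'|\le1$, by $2^{-j'(\gamma_1+\frac n2+\cdots)}M_A(f_{j'})(x)$ with $A<\min(p,q)$ and $N_0>\frac nA+1$, via Yang's maximal‑function lemma cited before Theorem~\ref{lem:CZcontinuity}, and the Fefferman--Stein vector‑valued maximal inequality on $L^p(\ell^q)$ absorbs $M_A$. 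For $X_1$ and $X_2$ this is carried out with $t$ frozen and the supremum in $t$ taken afterwards; for $X_3$ and $X_4$ the same estimate is applied pointwise in $t$ under the integral and Hölder's inequality in $\frac{dt}{t}$ converts $\big(\int(\cdots)\tfrac{dt}{t}\big)^q$ back into $\int(\cdots)^q\tfrac{dt}{t}$, exactly as in Parts~III--IV of the proof of Theorem~\ref{thm-cha}. Summing the (finite, three‑term) geometric series in $|j-j'|\le 1$ and the summable series in $w$ yields $\|R_l a\|\lesssim\|a\|$ on each of the four pieces, hence $R_l$ is bounded on the intersection $\mathbb{F}^{\gamma_1,\gamma_2}_{p,q,m,m'}$.

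The point requiring the most care is the compatibility of the $j$‑summation ranges, which depend on both $t$ and $r$ (the cutoff $\max\{-\log_2 r,-\frac{\log_2 t}{2\beta}\}$ for $X_1$, the band $-\log_2 r<j<-\frac{\log_2 t}{2\beta}$ for $X_2$, and $\Lambda_Q^n$ together with the bound $r^{2\beta}$ of the $t$‑integral for $X_3$), with the shift $j\leftrightarrow j'$ allowed by $|j-j'|\le1$: one must check that the boundary scales are harmless, which they are, since moving a scale by one unit changes $2^{-2j\beta}$ and the cutoffs only by fixed constants that are absorbed into the implicit constants. The localization to $Q_r$ rather than to all of $\mathbb{R}^n$ is handled precisely by the $\widetilde Q_r^{w}$ decomposition, identically to Theorems~\ref{lem:CZcontinuity} and~\ref{thm-cha}, so no genuinely new estimate is needed; in particular the hypothesis $m>p$ enters only through the ambient structure of $\mathbb{F}^{\gamma_1,\gamma_2}_{p,q,m,m'}$ and the convergence of the relevant $t$‑integrals, and $1<p,q<\infty$ is used to choose $A<\min(p,q)$ for the maximal‑function step.
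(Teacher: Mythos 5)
Your proposal is correct and follows essentially the same route as the paper's proof: wavelet almost-diagonalization of $R_l$ with $|j-j'|\le 1$, the $\widetilde Q_r^w$ decomposition, Yang's maximal-function lemma combined with the vector-valued maximal inequality on $L^p(\ell^q)$, and H\"older's inequality in $\frac{dt}{t}$ for the integrated pieces. The one point to state more carefully is that the boundary scales are not merely absorbed into implicit constants: in the paper's Steps I--IV the shifted terms land in a \emph{different} component norm (the estimate for $\mathbb{F}^{\gamma_1,\gamma_2,II}_{p,q}$ picks up $\|g\|_{\mathbb{F}^{\gamma_1,\gamma_2,I}_{p,q,m}}$, and the estimates for the third and fourth components pick up each other), which is harmless only because the theorem asserts boundedness on the intersection $\mathbb{F}^{\gamma_1,\gamma_2}_{p,q,m,m'}$ rather than on each piece separately.
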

\begin{proof}
$$\begin{array}{rl}
R_{l}g(t, x)&=\sum\limits_{(\varepsilon,j,k)\in\Lambda_{n}}g^{\varepsilon}_{j,k}(t)R_{l}\Phi^{\varepsilon}_{j,k}(x)\\
&=:\sum\limits_{(\varepsilon,j,k)\in\Lambda_{n}}b^{\varepsilon}_{j,k}(t)\Phi^{\varepsilon}_{j,k}(x),
\end{array}$$
where
$$\begin{array}{rl}
b^{\varepsilon}_{j,k}(t)=&\langle R_{l}g(t, \cdot),\ \Phi^{\varepsilon}_{j,k}\rangle\\
=&\sum\limits_{|j-j'|\leq1}\sum\limits_{\varepsilon',k'}a^{\varepsilon,\varepsilon'}_{j,k,j',k'}g^{\varepsilon'}_{j',k'}(t).
\end{array}$$
By Lemma \ref{lem:CZO}, we have
$$\begin{array}{rl}
|a^{\varepsilon,\varepsilon'}_{j,k,j',k'}|\lesssim 2^{-|j-j'|(\frac{n}{2}+N_{0})}\Big(\frac{2^{-j}+2^{-j'}}{2^{-j}+2^{-j'}+|2^{-j}k-2^{-j'}k'|}\Big)^{n+N_{0}}.
\end{array}$$
We divide the proof into four parts.

{\bf Step I:} $(R_{l}g)(t, x)\in\mathbb{F}^{\gamma_{1}, \gamma_{2}, I}_{p, q, m}$, $l=1, 2, \cdots, n$.

We can see that
$$\begin{array}{rl}
I^{\gamma_{1}, \gamma_{2}}_{p, q, m}=&|Q|^{\frac{\gamma_{2}}{n}-\frac{1}{p}}t^{m}\Big\|\Big(\sum\limits_{j\geq\max\left\{-\log_{2}r, -\frac{\log_{2}t}{2\beta}\right\}}\sum\limits_{Q_{j,k}\subset Q_{r}}2^{qj(\gamma_{1}+\frac{n}{2}+2m\beta)}\\
&\quad |b^{\varepsilon}_{j,k}(t)|^{q}\chi(2^{j}x-k)\Big)^{1/q}\Big\|_{p}\\
\lesssim&|Q|^{\frac{\gamma_{2}}{n}-\frac{1}{p}}t^{m}\Big\|\Big(\sum\limits_{j\geq\max\left\{-\log_{2}r, -\frac{\log_{2}t}{2\beta}\right\}}\sum\limits_{Q_{j,k}\subset Q_{r}}2^{qj(\gamma_{1}+\frac{n}{2}+2m\beta)}\\
&\quad\Big|\sum\limits_{|j-j'|\leq1}\sum\limits_{\varepsilon',k'}a^{\varepsilon,\varepsilon'}_{j,k,j',k'}g^{\varepsilon'}_{j',k'}(t)\Big|^{q}
\chi(2^{j}x-k)\Big)^{1/q}\Big\|_{p}\\
\lesssim&|Q|^{\frac{\gamma_{2}}{n}-\frac{1}{p}}t^{m}\Big\|\Big(\sum\limits_{j\geq\max\left\{-\log_{2}r, -\frac{\log_{2}t}{2\beta}\right\}}\sum\limits_{Q_{j,k}\subset Q_{r}}2^{qj(\gamma_{1}+\frac{n}{2}+2m\beta)}\\
&\quad\sum\limits_{|j-j'|\leq1}\Big|\sum\limits_{\varepsilon',k'}\frac{g^{\varepsilon'}_{j',k'}(t)}{(1+|2^{j-j'}k'-k|)^{n+N_{0}}}\Big|^{q}
\chi(2^{j}x-k)\Big)^{1/q}\Big\|_{p}\\
\end{array}$$
Let
$$\begin{array}{rl}
f_{j'}=\sum\limits_{\varepsilon',k'}2^{j'(\gamma_{1}+\frac{n}{2}+2m\beta)}|g^{\varepsilon'}_{j',k'}(t)|\chi(2^{j'}x-k').
\end{array}$$
Then we get
$$\begin{array}{rl}
I^{\gamma_{1}, \gamma_{2}}_{p,q,m}(t)\lesssim&|Q_{r}|^{\frac{\gamma_{2}}{n}-\frac{1}{p}}t^{m}\Big\|\Big(\sum\limits_{j\geq\max\left\{-\log_{2}r, -\frac{\log_{2}t}{2\beta}\right\}}\sum\limits_{Q_{j,k}\subset Q_{r}}2^{qj(\gamma_{1}+\frac{n}{2}+2m\beta)}\\
&\quad\sum\limits_{|j-j'|\leq1}2^{-qj'(\gamma_{1}+\frac{n}{2}+2m\beta)}|M_{A}(f_{j'})(x)|^{q}\chi(2^{j}x-k)\Big)^{1/q}\Big\|_{p}\\
\lesssim&|Q_{r}|^{\frac{\gamma_{2}}{n}-\frac{1}{p}}t^{m}\Big\|\Big(\sum\limits_{j\geq\max\left\{-\log_{2}r, -\frac{\log_{2}t}{2\beta}\right\}}\sum\limits_{Q_{j,k}\subset Q_{r}}\\
&\quad\sum\limits_{|j-j'|\leq1}2^{q(j-j')(\gamma_{1}+\frac{n}{2}+2m\beta)}|M_{A}(f_{j'})(x)|^{q}\chi(2^{j}x-k)\Big)^{1/q}\Big\|_{p}\\
\lesssim&|Q_{r}|^{\frac{\gamma_{2}}{n}-\frac{1}{p}}t^{m}\sum\limits_{w\in\mathbb{Z}^{n}}\frac{1}{(1+|w|)^{N}}\Big\|\Big(\sum\limits_{j'\geq\max\left\{-\log_{2}r_{w}, -\frac{-\log_{2}t}{2\beta}\right\}}
|f_{j'}(x)|^{q}\Big)^{1/q}\Big\|_{p}\\
\lesssim&|Q_{r}|^{\frac{\gamma_{2}}{n}-\frac{1}{p}}t^{m}\sum\limits_{w\in\mathbb{Z}^{n}}\frac{1}{(1+|w|)^{N}}\Big\|\Big(\sum\limits_{j'\geq\max\left\{-\log_{2}r_{w}, -\frac{-\log_{2}t}{2\beta}\right\}}2^{qj'(\gamma_{1}+\frac{n}{2}+2m\beta)}\\
&\quad\times\Big(\sum\limits_{Q_{j',k'}\subset Q_{r}^{w}}|g^{\varepsilon'}_{j',k'}(t)|\chi(2^{j'}x-k')\Big)^{q}\Big)^{1/q}\Big\|_{p}.
\end{array}$$
By H\"older's inequality, we obtain
$$\begin{array}{rl}
I^{\gamma_{1}, \gamma_{2}}_{p,q,m}(t)\lesssim&|Q_{r}|^{\frac{\gamma_{2}}{n}-\frac{1}{p}}t^{m}
\sum\limits_{w\in\mathbb{Z}^{n}}\frac{1}{(1+|w|)^{N}}\Big\|\Big(\sum\limits_{j'\geq\max\left\{-\log_{2}r_{w}, -\frac{-\log_{2}t}{2\beta}\right\}}2^{qj'(\gamma_{1}+\frac{n}{2}+2m\beta)}\\
&\quad\times\sum\limits_{Q_{j',k'}\subset Q_{r}^{w}}|g^{\varepsilon'}_{j',k'}(t)|^{q}\chi(2^{j'}x-k')\Big)^{1/q}\Big\|_{p}\\
\lesssim&\|g\|_{\mathbb{F}^{\gamma_{1}, \gamma_{2}, I}_{p,q,m}}+\|g\|_{\mathbb{F}^{\gamma_{1}, \gamma_{2}, II}_{p,q}}
\end{array}$$

{\bf Step II:} $(R_{l}g)(t, x)\in\mathbb{F}^{\gamma_{1}, \gamma_{2}, II}_{p, q}$, $l=1, 2, \cdots, n$.

For this term, let the radius $r$ of $Q_{r}$ be $2^{-j_{0}}$.

$$\begin{array}{rl}
II^{\gamma_{1}, \gamma_{2}}_{p,q, Q_{r}}=&|Q_{r}|^{\frac{\gamma_{2}}{n}-\frac{1}{p}}\Big\|\Big(\sum\limits_{j_{0}<j<-\frac{\log_{2}t}{2\beta}}\sum\limits_{Q_{j,k}\subset Q_{r}}2^{qj(\gamma_{1}+\frac{n}{2})}|a^{\varepsilon}_{j,k}(t)|^{q}\chi(2^{j}x-k)\Big)^{1/q}\Big\|_{p}\\
\lesssim&|Q_{r}|^{\frac{\gamma_{2}}{n}-\frac{1}{p}}\Big\|\Big(\sum\limits_{j_{0}<j<-\frac{\log_{2}t}{2\beta}}\sum\limits_{Q_{j,k}\subset Q_{r}}2^{qj(\gamma_{1}+\frac{n}{2})}\\
&\quad|\sum\limits_{|j-j'|\leq1}
\sum\limits_{\varepsilon',k'}a^{\varepsilon,\varepsilon'}_{j,k,j',k'}g^{\varepsilon'}_{j',k'}(t)|^{q}\chi(2^{j}x-k)\Big)^{1/q}\Big\|_{p}\\
\lesssim&|Q_{r}|^{\frac{\gamma_{2}}{n}-\frac{1}{p}}\Big\|\Big(\sum\limits_{j_{0}<j<-\log_{2}t/2\beta}\sum\limits_{Q_{j,k}\subset Q_{r}}2^{qj(\gamma_{1}+\frac{n}{2})}\\
&\quad\sum\limits_{|j-j'|\leq1}|
\sum\limits_{\varepsilon',k'}\frac{g^{\varepsilon'}_{j',k'}(t)}{(1+|2^{j-j'}k'-k|)^{n+N_{0}}}|^{q}\chi(2^{j}x-k)\Big)^{1/q}\Big\|_{p}.
\end{array}$$
Let
$$\begin{array}{rl}
f_{j'}=2^{j'(\gamma_{1}+\frac{n}{2})}\sum\limits_{Q_{j',k'}\subset Q^{w}_{j,k}}|g^{\varepsilon'}_{j',k'}(t)|\chi(2^{j'}x-k').
\end{array}$$
We have
$$\begin{array}{rl}
II^{\gamma_{1}, \gamma_{2}}_{p,q, Q_{r}}\lesssim&|Q_{r}|^{\frac{\gamma_{2}}{n}-\frac{1}{p}}\sum\limits_{w\in\mathbb{Z}^{n}}(1+|w|)^{-N}\Big\|\Big(\sum\limits_{j_{0}<j<-\frac{\log_{2}t}{2\beta}}
\sum\limits_{Q_{j,k}\subset Q_{r}}2^{qj(\gamma_{1}+\frac{n}{2})}\sum\limits_{|j-j'|\leq1}2^{-qj'(\gamma_{1}+\frac{n}{2})}\\
&\quad(M_{A}(f_{j'})(x))^{q}\chi(2^{j}x-k)\Big)^{1/q}\Big\|_{p}\\
\lesssim&|Q_{r}|^{\frac{\gamma_{2}}{n}-\frac{1}{p}}\sum\limits_{w\in\mathbb{Z}^{n}}(1+|w|)^{-N}\Big\|\Big(\sum\limits_{j_{0}<j<-\frac{\log_{2}t}{2\beta}}
\sum\limits_{Q_{j,k}\subset Q_{r}}\sum\limits_{|j-j'|\leq1}2^{q(j-j')(\gamma_{1}+\frac{n}{2})}\\
&\quad(M_{A}(f_{j'})(x))^{q}\chi(2^{j}x-k)\Big)^{1/q}\Big\|_{p}.
\end{array}$$
Because $|j-j'|\leq1$ and $j_{0}<j<-\log_{2}t/2\beta$, we have
$j'\geq-\log_{2}r-1$ and $j'\leq-\log_{2}t/2\beta+1$. On the other hand,
The facts $Q_{j,k}\subset Q_{r}$ and $Q_{j',k'}\subset Q^{w}_{j,k}$ imply that $Q_{j',k'}\subset Q^{w}_{r}$. We obtain that
$$\begin{array}{rl}
II^{\gamma_{1}, \gamma_{2}}_{p,q, Q_{r}}\lesssim&|Q_{r}|^{\frac{\gamma_{2}}{n}-\frac{1}{p}}\sum\limits_{w\in\mathbb{Z}^{n}}(1+|w|)^{-N}
\Big\|\Big(\sum\limits_{-\log_{2}r_{w}<j'<-\frac{\log_{2}t}{2\beta}+1}2^{qj'(\gamma_{1}+\frac{n}{2})}\\
&\quad\Big(\sum\limits_{(\varepsilon',k'): Q_{j',k'}\subset Q^{w}_{r}}|g^{\varepsilon'}_{j',k'}(t)|\chi(2^{j'}x-k')\Big)^{q}\Big)^{1/q}\Big\|_{p}\\
=:&M_{1}+M_{2},
\end{array}$$
where
$$\begin{array}{rl}
M_{1}=&|Q_{r}|^{\frac{\gamma_{2}}{n}-\frac{1}{p}}\sum\limits_{w\in\mathbb{Z}^{n}}(1+|w|)^{-N}\Big\|\Big(\sum\limits_{-\log_{2}r_{w}<j'<-\frac{\log_{2}t}{2\beta}}
2^{qj'(\gamma_{1}+\frac{n}{2})}\\
&\quad\Big(\sum\limits_{(\varepsilon',k'): Q_{j',k'}\subset Q^{w}_{r}}|g^{\varepsilon'}_{j',k'}(t)|\chi(2^{j'}x-k')\Big)^{q}\Big)^{1/q}\Big\|_{p}
\end{array}$$
and
$$\begin{array}{rl}
M_{2}&=|Q_{r}|^{\frac{\gamma_{2}}{n}-\frac{1}{p}}\sum\limits_{w\in\mathbb{Z}^{n}}(1+|w|)^{-N}\Big\|\Big(\sum\limits_{j'\geq\max\left\{-\log_{2}r_{w}, -\frac{\log_{2}t}{2\beta}\right\}}
2^{qj'(\gamma_{1}+\frac{n}{2})}\\
&\quad\Big(\sum\limits_{(\varepsilon',k'): Q_{j',k'}\subset Q^{w}_{r}}|g^{\varepsilon'}_{j',k'}(t)|\chi(2^{j'}x-k')\Big)^{q}\Big)^{1/q}\Big\|_{p}
\end{array}$$
Obviously $M_{1}\lesssim \|g\|_{\mathbb{F}^{\gamma_{1}, \gamma_{2}, II}_{p,q }}$. For $M_{1}$, if $j'\geq\max\{-\log_{2}r_{w}, -\frac{\log_{2}t}{2\beta}\}$,
then $(t2^{2j'\beta})^{mq}\geq1$ and
$$\begin{array}{rl}
M_{2}\lesssim&|Q_{r}|^{\frac{\gamma_{2}}{n}-\frac{1}{p}}\sum\limits_{w\in\mathbb{Z}^{n}}(1+|w|)^{-N}\Big\|\Big(\sum\limits_{j'\geq\max\left\{-\log_{2}r_{w}, -\frac{\log_{2}t}{2\beta}\right\}}
2^{qj'(\gamma_{1}+\frac{n}{2})}(t2^{2j'\beta})^{mq}\\
&\quad\Big(\sum\limits_{(\varepsilon',k'): Q_{j',k'}\subset Q^{w}_{r}}|g^{\varepsilon'}_{j',k'}(t)|\chi(2^{j'}x-k')\Big)^{q}\Big)^{1/q}\Big\|_{p}\\
\lesssim&|Q_{r}|^{\frac{\gamma_{2}}{n}-\frac{1}{p}}\sum\limits_{w\in\mathbb{Z}^{n}}t^{m}(1+|w|)^{-N}\Big\|\Big(\sum\limits_{j'\geq\max\left\{-\log_{2}r_{w}, -\frac{\log_{2}t}{2\beta}\right\}}
2^{qj'(\gamma_{1}+\frac{n}{2}+2m\beta)}\\
&\quad\sum\limits_{(\varepsilon',k'): Q_{j',k'}\subset Q^{w}_{r}}|g^{\varepsilon'}_{j',k'}(t)|^{q}\chi(2^{j'}x-k')\Big)^{1/q}\Big\|_{p}\\
\lesssim&\|g\|_{\mathbb{F}^{\gamma_{1}, \gamma_{2}, I}_{p,q,m}}
\end{array}$$

{\bf Step III:} $(R_{l}g)(t, x)\in\mathbb{F}^{\gamma_{1}, \gamma_{2}, III}_{p, q, m}$, $l=1, 2, \cdots, n$.

$$\begin{array}{rl}
III^{\gamma_{1},\gamma_{2}}_{p,q,m}\lesssim&|Q_{r}|^{\frac{\gamma_{2}}{n}-\frac{1}{p}}\Big\|\Big(\sum\limits_{(\varepsilon,j,k)\in\Lambda^{n}_{Q}}
2^{qj(\gamma_{1}+\frac{n}{2}+2m\beta)}
\int^{r^{2\beta}}_{2^{-2j\beta}}t^{mq}|a^{\varepsilon}_{j,k}(t)|^{q}\frac{dt}{t}\chi(2^{j}x-k)\Big)^{1/q}\Big\|_{p}\\
\lesssim&|Q_{r}|^{\frac{\gamma_{2}}{n}-\frac{1}{p}}\Big\|\Big(\sum\limits_{(\varepsilon,j,k)\in\Lambda^{n}_{Q}}
2^{qj(\gamma_{1}+\frac{n}{2}+2m\beta)}\\
&\quad\int^{r^{2\beta}}_{2^{-2j\beta}}t^{mq}\sum\limits_{|j-j'|\leq1}\Big(\sum\limits_{\varepsilon',k'}
\frac{|g^{\varepsilon'}_{j',k'}(t)|}{(1+|2^{j-j'}k'-k|)^{n+N_{0}}}\Big)^{q}\frac{dt}{t}\chi(2^{j}x-k)\Big)^{1/q}\Big\|_{p}\\
\lesssim&|Q_{r}|^{\frac{\gamma_{2}}{n}-\frac{1}{p}}\Big\|\Big(\sum\limits_{(\varepsilon,j,k)\in\Lambda^{n}_{Q}}
2^{qj(\gamma_{1}+\frac{n}{2}+2m\beta)}\\
&\quad\int^{r^{2\beta}}_{2^{-2j\beta}}t^{mq}\sum\limits_{|j-j'|\leq1}\sum\limits_{\varepsilon',k'}
\frac{|g^{\varepsilon'}_{j',k'}(t)|^{q}}{(1+|2^{j-j'}k'-k|)^{n+N_{0}}}\frac{dt}{t}\chi(2^{j}x-k)\Big)^{1/q}\Big\|_{p}\\
\lesssim&|Q_{r}|^{\frac{\gamma_{2}}{n}-\frac{1}{p}}\Big\|\Big(\sum\limits_{(\varepsilon,j,k)\in\Lambda^{n}_{Q}}
2^{qj(\gamma_{1}+\frac{n}{2}+2m\beta)}\sum\limits_{|j-j'|\leq1}\sum\limits_{\varepsilon',k'}
\frac{1}{(1+|2^{j-j'}k'-k|)^{n+N_{0}}}\\
&\quad\Big(\int^{r^{2\beta}}_{2^{-2j\beta}}t^{mq}|g^{\varepsilon'}_{j',k'}(t)|^{q}\frac{dt}{t}\Big)\chi(2^{j}x-k)\Big)^{1/q}\Big\|_{p}.
\end{array}$$
Let
$$\begin{array}{rl}
|b^{\varepsilon'}_{j',k'}|&=\int^{r^{2\beta}}_{2^{-2j\beta}}t^{qm}|g^{\varepsilon'}_{j',k'}(t)|^{q}\frac{dt}{t}.
\end{array}$$
Because $|j-j'|\leq1$,
$$\begin{array}{rl}
|b^{\varepsilon'}_{j',k'}|&\lesssim\int^{r^{2\beta}}_{2^{-2j'\beta-2\beta}}t^{qm}|g^{\varepsilon'}_{j',k'}(t)|^{q}\frac{dt}{t}.
\end{array}$$
We can get
$$\begin{array}{rl}
III^{\gamma_{1},\gamma_{2}}_{p,q,m}\lesssim&|Q_{r}|^{\frac{\gamma_{2}}{n}-\frac{1}{p}}\Big\|\Big(\sum\limits_{(\varepsilon,j,k)\in\Lambda^{n}_{Q}}
2^{qj(\gamma_{1}+\frac{n}{2}+2m\beta)}\sum\limits_{|j-j'|\leq1}\\
&\quad\sum\limits_{\varepsilon',k'}
\frac{|b^{\varepsilon'}_{j',k'}|}{(1+|2^{j-j'}k'-k|)^{n+N_{0}}}\chi(2^{j}x-k)\Big)^{1/q}\Big\|_{p}\\
\lesssim&|Q_{r}|^{\frac{\gamma_{2}}{n}-\frac{1}{p}}\Big\|\Big(\sum\limits_{(\varepsilon,j,k)\in\Lambda^{n}_{Q}}
2^{qj(\gamma_{1}+\frac{n}{2}+2m\beta)}\sum\limits_{|j-j'|\leq1}\\
&\quad\Big(\sum\limits_{\varepsilon',k'}
\frac{|b^{\varepsilon'}_{j',k'}|^{1/q}}{(1+|2^{j-j'}k'-k|)^{n+N_{0}}}\Big)^{q}\chi(2^{j}x-k)\Big)^{1/q}\Big\|_{p}
\end{array}$$
Let
$$\begin{array}{rl}
g_{j'}&=\sum\limits_{Q_{j',k'}\subset Q^{w}_{r}}2^{j'(\gamma_{1}+\frac{n}{2}+2m\beta)}|b^{\varepsilon'}_{j',k'}(t)|^{1/q}\chi(2^{j'}x-k').
\end{array}$$
Then
$$\begin{array}{rl}
III^{\gamma_{1},\gamma_{2}}_{p,q,m}\lesssim&|Q_{r}|^{\frac{\gamma_{2}}{n}-\frac{1}{p}}\sum\limits_{w\in\mathbb{Z}^{n}}\frac{1}{(1+|w|)^{N}}
\Big\|\Big(\sum\limits_{j'\geq-\log_{2}r_{w}}(M_{A}g_{j'}(x))^{q}\Big)^{1/q}\Big\|_{p}\\
\lesssim&|Q_{r}|^{\frac{\gamma_{2}}{n}-\frac{1}{p}}\sum\limits_{w\in\mathbb{Z}^{n}}\frac{1}{(1+|w|)^{N}}
\Big\|\Big(\sum\limits_{j'\geq-\log_{2}r_{w}}|g_{j'}(x)|^{q}\Big)^{1/q}\Big\|_{p}\\
\lesssim&|Q_{r}|^{\frac{\gamma_{2}}{n}-\frac{1}{p}}\sum\limits_{w\in\mathbb{Z}^{n}}\frac{1}{(1+|w|)^{N}}
\Big\|\Big(\sum\limits_{j'\geq-\log_{2}r_{w}}2^{qj'(\gamma_{1}+\frac{n}{2}+2m\beta)}\\
&\quad\Big(\sum\limits_{Q_{j',k'}\subset Q^{w}_{r}}|b^{\varepsilon'}_{j',k'}(t)|^{1/q}\chi(2^{j'}x-k')\Big)^{q}\Big)^{1/q}\Big\|_{p}.
\end{array}$$
For fixed $j'$, there exist only one $Q_{j',k'}$ such that $x\in Q_{j',k'}$. Then
$$\begin{array}{rl}
&\Big(\sum\limits_{Q_{j',k'}\subset Q^{w}_{r}}|b^{\varepsilon'}_{j',k'}(t)|^{1/q}\chi(2^{j'}x-k')\Big)^{q}\\
\lesssim&\sum\limits_{Q_{j',k'}\subset Q^{w}_{r}}|b^{\varepsilon'}_{j',k'}(t)|\chi(2^{j'}x-k').
\end{array}$$
Hence
$$\begin{array}{rl}
III^{\gamma_{1},\gamma_{2}}_{p,q,m}\lesssim&|Q_{r}|^{\frac{\gamma_{2}}{n}-\frac{1}{p}}\sum\limits_{w\in\mathbb{Z}^{n}}\frac{1}{(1+|w|)^{N}}
\Big\|\Big(\sum\limits_{j'\geq-\log_{2}r_{w}}2^{qj'(\gamma_{1}+\frac{n}{2}+2m\beta)}\\
&\quad\sum\limits_{Q_{j',k'}\subset Q^{w}_{r}}|b^{\varepsilon'}_{j',k'}(t)|\chi(2^{j'}x-k')\Big)^{1/q}\Big\|_{p}\\
\lesssim&|Q_{r}|^{\frac{\gamma_{2}}{n}-\frac{1}{p}}\sum\limits_{w\in\mathbb{Z}^{n}}\frac{1}{(1+|w|)^{N}}
\Big\|\Big(\sum\limits_{j'\geq-\log_{2}r_{w}}2^{qj'(\gamma_{1}+\frac{n}{2}+2m\beta)}\\
&\quad\sum\limits_{Q_{j',k'}\subset Q^{w}_{r}}\int^{r^{2\beta}}_{2^{-2j'\beta-2\beta}}t^{qm}|g^{\varepsilon'}_{j',k'}(t)|^{q}\frac{dt}{t}\chi(2^{j'}x-k')\Big)^{1/q}\Big\|_{p}\\
=:&M_{1}+M_{2},
\end{array}$$
where
$$\begin{array}{rl}
M_{1}=&|Q_{r}|^{\frac{\gamma_{2}}{n}-\frac{1}{p}}\sum\limits_{w\in\mathbb{Z}^{n}}\frac{1}{(1+|w|)^{N}}
\Big\|\Big(\sum\limits_{j'\geq-\log_{2}r_{w}}2^{qj'(\gamma_{1}+\frac{n}{2}+2m\beta)}\\
&\quad\sum\limits_{Q_{j',k'}\subset Q^{w}_{r}}\int^{r^{2\beta}}_{2^{-2j'\beta}}t^{qm}|g^{\varepsilon'}_{j',k'}(t)|^{q}\frac{dt}{t}\chi(2^{j'}x-k')\Big)^{1/q}\Big\|_{p}
\end{array}$$
and
$$\begin{array}{rl}
M_{2}=&|Q_{r}|^{\frac{\gamma_{2}}{n}-\frac{1}{p}}\sum\limits_{w\in\mathbb{Z}^{n}}\frac{1}{(1+|w|)^{N}}
\Big\|\Big(\sum\limits_{j'\geq-\log_{2}r_{w}}2^{qj'(\gamma_{1}+\frac{n}{2}+2m\beta)}\\
&\quad\sum\limits_{Q_{j',k'}\subset Q^{w}_{r}}\int^{2^{-2j'\beta}}_{2^{-2j'\beta-2\beta}}t^{qm}|g^{\varepsilon'}_{j',k'}(t)|^{q}\frac{dt}{t}\chi(2^{j'}x-k')\Big)^{1/q}\Big\|_{p}.
\end{array}$$
It is easy to see that
$M_{1}\lesssim\|g\|_{\mathbb{F}^{\gamma_{1},\gamma_{2}, III}_{p,q,m}}$. For $M_{2}$, because $t<2^{2j'\beta}$, $(t2^{2j'\beta})^{m}\lesssim (t2^{2j'\beta})^{m'}$. Then
$$\begin{array}{rl}
M_{2}\lesssim&|Q_{r}|^{\frac{\gamma_{2}}{n}-\frac{1}{p}}\sum\limits_{w\in\mathbb{Z}^{n}}\frac{1}{(1+|w|)^{N}}
\Big\|\Big(\sum\limits_{j'\geq-\log_{2}r_{w}}2^{qj'(\gamma_{1}+\frac{n}{2}+2m'\beta)}\\
&\quad\sum\limits_{Q_{j',k'}\subset Q^{w}_{r}}\int^{2^{-2j'\beta}}_{0}t^{qm'}|g^{\varepsilon'}_{j',k'}(t)|^{q}\frac{dt}{t}\chi(2^{j'}x-k')\Big)^{1/q}\Big\|_{p}\\
\lesssim&\|g\|_{\mathbb{F}^{\gamma_{1},\gamma_{2}, IV}_{p,q,m'}}.
\end{array}$$

{\bf Step IV:} $(R_{l}g)(t, x)\in\mathbb{F}^{\gamma_{1}, \gamma_{2}, IV}_{p, q, m'}$, $l=1, 2, \cdots, n$.
$$\begin{array}{rl}
IV^{\gamma_{1}, \gamma_{2}}_{p,q,m'}=&|Q_{r}|^{\frac{\gamma_{2}}{n}-\frac{1}{p}}\Big\|\Big(\sum\limits_{(\varepsilon,j,k)\in\Lambda^{n}_{Q}}
2^{qj(\gamma_{1}+\frac{n}{2}+2m'\beta)}\\
&\quad\int_{0}^{2^{-2j\beta}}t^{m'q}|a^{\varepsilon}_{j,k}(t)|^{q}\frac{dt}{t}\chi(2^{j}x-k)\Big)^{1/q}\Big\|_{p}\\
\lesssim&|Q_{r}|^{\frac{\gamma_{2}}{n}-\frac{1}{p}}\Big\|\Big(\sum\limits_{(\varepsilon,j,k)\in\Lambda^{n}_{Q}}
2^{qj(\gamma_{1}+\frac{n}{2}+2m'\beta)}\sum\limits_{|j-j'|\leq1}\\
&\quad\int_{0}^{2^{-2j\beta}}t^{m'q}\Big(\sum\limits_{\varepsilon',k'}
\frac{|g^{\varepsilon'}_{j',k'}(t)|}{(1+|2^{j-j'}k'-k|)^{n+N_{0}}}\Big)^{q}\frac{dt}{t}\chi(2^{j}x-k)\Big)^{1/q}\Big\|_{p}\\
\lesssim&|Q_{r}|^{\frac{\gamma_{2}}{n}-\frac{1}{p}}\Big\|\Big(\sum\limits_{(\varepsilon,j,k)\in\Lambda^{n}_{Q}}
2^{qj(\gamma_{1}+\frac{n}{2}+2m'\beta)}\sum\limits_{|j-j'|\leq1}\\
&\quad\int_{0}^{2^{-2j\beta}}t^{m'q}\sum\limits_{\varepsilon',k'}
\frac{|g^{\varepsilon'}_{j',k'}(t)|^{q}}{(1+|2^{j-j'}k'-k|)^{n+N_{0}}}\frac{dt}{t}\chi(2^{j}x-k)\Big)^{1/q}\Big\|_{p}.
\end{array}$$
because $0<t<2^{-2j\beta}$ and $|j-j'|\leq1$, $2^{-2j\beta}\leq2^{-2j'\beta+2\beta}$. We can get
$$\begin{array}{rl}
IV^{\gamma_{1}, \gamma_{2}}_{p,q,m'}\lesssim&|Q_{r}|^{\frac{\gamma_{2}}{n}-\frac{1}{p}}\Big\|\Big(\sum\limits_{(\varepsilon,j,k)\in\Lambda^{n}_{Q}}
2^{qj(\gamma_{1}+\frac{n}{2}+2m'\beta)}\sum\limits_{|j-j'|\leq1}\\
&\quad\sum\limits_{\varepsilon',k'}\Big(\int_{0}^{2^{-2j\beta}}t^{m'q}|g^{\varepsilon'}_{j',k'}(t)|^{q}\frac{dt}{t}\Big)
\frac{1}{(1+|2^{j-j'}k'-k|)^{n+N_{0}}}\chi(2^{j}x-k)\Big)^{1/q}\Big\|_{p}.
\end{array}$$
Let
$$\begin{array}{rl}
b^{\varepsilon'}_{j', k'}=\int_{0}^{2^{-2j\beta}}t^{m'q}|g^{\varepsilon'}_{j',k'}(t)|^{q}\frac{dt}{t}.
\end{array}$$
Then
$$\begin{array}{rl}
IV^{\gamma_{1}, \gamma_{2}}_{p,q,m'}\lesssim&|Q_{r}|^{\frac{\gamma_{2}}{n}-\frac{1}{p}}\Big\|\Big[\sum\limits_{(\varepsilon,j,k)\in\Lambda^{n}_{Q}}
2^{qj(\gamma_{1}+\frac{n}{2}+2m'\beta)}\sum\limits_{|j-j'|\leq1}\\
&\quad\sum\limits_{\varepsilon',k'}\frac{|b^{\varepsilon'}_{j', k'}|}{(1+|2^{j-j'}k'-k|)^{n+N_{0}}}\chi(2^{j}x-k)\Big]^{1/q}\Big\|_{p}\\
\lesssim&|Q_{r}|^{\frac{\gamma_{2}}{n}-\frac{1}{p}}\sum\limits_{w\in\mathbb{Z}^{n}}(1+|w|)^{-N}\Big\|\Big[\sum\limits_{(\varepsilon,j,k)\in\Lambda^{n}_{Q}}
2^{qj(\gamma_{1}+\frac{n}{2}+2m'\beta)}\sum\limits_{|j-j'|\leq1}\\
&\quad\Big(\sum\limits_{Q_{j',k'}\subset Q_{r}^{w}}\frac{|b^{\varepsilon'}_{j', k'}|^{1/q}}{(1+|2^{j-j'}k'-k|)^{n+N_{0}}}\Big)^{q}\chi(2^{j}x-k)\Big]^{1/q}\Big\|_{p}
\end{array}$$
Take $$\begin{array}{rl}
g_{j'}&=2^{j'(\gamma_{1}+\frac{n}{2}+2m'\beta)}\sum\limits_{Q_{j',k'}\subset Q^{w}_{r}}|b^{\varepsilon'}_{j',k'}|^{1/q}\chi(2^{j'}x-k').
\end{array}$$
We obtain
$$\begin{array}{rl}
IV^{\gamma_{1}, \gamma_{2}}_{p,q,m'}\lesssim&|Q_{r}|^{\frac{\gamma_{2}}{n}-\frac{1}{p}}\sum\limits_{w\in\mathbb{Z}^{n}}(1+|w|)^{-N}\Big\|\Big[\sum\limits_{j'\geq-\log_{2}r_{w}}
|M_{A}(g_{j'})(x)|^{q}\Big]^{1/q}\Big\|_{p}\\
\lesssim&|Q_{r}|^{\frac{\gamma_{2}}{n}-\frac{1}{p}}\sum\limits_{w\in\mathbb{Z}^{n}}(1+|w|)^{-N}\Big\|\Big[\sum\limits_{j'\geq-\log_{2}r_{w}}
|g_{j'}(x)|^{q}\Big]^{1/q}\Big\|_{p}\\
\lesssim&|Q_{r}|^{\frac{\gamma_{2}}{n}-\frac{1}{p}}\sum\limits_{w\in\mathbb{Z}^{n}}(1+|w|)^{-N}\Big\|\Big[\sum\limits_{j'\geq-\log_{2}r_{w}}
2^{qj'(\gamma_{1}+\frac{n}{2}+2m'\beta)}\\
&\quad\Big(\sum\limits_{Q_{j',k'}\subset Q^{w}_{r}}|b^{\varepsilon'}_{j',k'}|^{1/q}\chi(2^{j'}x-k')\Big)^{q}\Big]^{1/q}\Big\|_{p}\\
\lesssim&|Q_{r}|^{\frac{\gamma_{2}}{n}-\frac{1}{p}}\sum\limits_{w\in\mathbb{Z}^{n}}(1+|w|)^{-N}\Big\|\Big[\sum\limits_{j'\geq-\log_{2}r_{w}}
2^{qj'(\gamma_{1}+\frac{n}{2}+2m'\beta)}\\
&\quad\sum\limits_{Q_{j',k'}\subset Q^{w}_{r}}|b^{\varepsilon'}_{j',k'}|\chi(2^{j'}x-k')\Big]^{1/q}\Big\|_{p}\\
\lesssim&|Q_{r}|^{\frac{\gamma_{2}}{n}-\frac{1}{p}}\sum\limits_{w\in\mathbb{Z}^{n}}(1+|w|)^{-N}\Big\|\Big[\sum\limits_{j'\geq-\log_{2}r_{w}}
2^{qj'(\gamma_{1}+\frac{n}{2}+2m'\beta)}\\
&\quad\sum\limits_{Q_{j',k'}\subset Q^{w}_{r}}\Big(\int_{0}^{2^{-2j'\beta+2\beta}}t^{m'q}|g^{\varepsilon'}_{j',k'}(t)|^{q}\frac{dt}{t}\Big)\chi(2^{j'}x-k')\Big]^{1/q}\Big\|_{p}.
\end{array}$$
It is easy to see that
$$\begin{array}{rl}
IV^{\gamma_{1}, \gamma_{2}}_{p,q,m'}\lesssim&|Q_{r}|^{\frac{\gamma_{2}}{n}-\frac{1}{p}}\sum\limits_{w\in\mathbb{Z}^{n}}(1+|w|)^{-N}\Big\|\Big[\sum\limits_{j'\geq-\log_{2}r_{w}}
2^{qj'(\gamma_{1}+\frac{n}{2}+2m'\beta)}\\
&\quad\sum\limits_{Q_{j',k'}\subset Q^{w}_{r}}\Big(\int_{0}^{2^{-2j'\beta}}t^{m'q}|g^{\varepsilon'}_{j',k'}(t)|^{q}\frac{dt}{t}\Big)\chi(2^{j'}x-k')\Big]^{1/q}\Big\|_{p}\\
+&|Q_{r}|^{\frac{\gamma_{2}}{n}-\frac{1}{p}}\sum\limits_{w\in\mathbb{Z}^{n}}(1+|w|)^{-N}\Big\|\Big[\sum\limits_{j'\geq-\log_{2}r_{w}}
2^{qj'(\gamma_{1}+\frac{n}{2}+2m'\beta)}\\
&\quad\sum\limits_{Q_{j',k'}\subset Q^{w}_{r}}\Big(\int_{2^{-2j'\beta}}^{2^{-2j'\beta+2\beta}}t^{m'q}|g^{\varepsilon'}_{j',k'}(t)|^{q}\frac{dt}{t}\Big)\chi(2^{j'}x-k')\Big]^{1/q}\Big\|_{p}\\
\lesssim&\|g\|_{\mathbb{F}^{\gamma_{1},\gamma_{2}, III}_{p,q,m}}+\|g\|_{\mathbb{F}^{\gamma_{1},\gamma_{2}, IV}_{p,q,m'}}.
\end{array}$$

\end{proof}

\end{document}